\numberwithin{equation}{section}
\newtheorem{theorem}{Theorem}[section]
\newtheorem{proposition}[theorem]{Proposition}
\newtheorem{corollary}[theorem]{Corollary}
\newtheorem{lemma}[theorem]{Lemma}
\newtheorem{remark}[theorem]{Remark}
\theoremstyle{definition}
\newtheorem{definition}[theorem]{Definition}
\newcommand{\R}{{\mathbb R}}
\newcommand{\C}{{\mathcal C}}
\newcommand{\eps}{{\varepsilon}}
\newcommand{\n}{\nabla}
\renewcommand{\a}{\alpha}
\renewcommand{\b}{\beta}
\renewcommand{\d}{\delta}
\newcommand{\g}{\gamma}
\renewcommand{\l}{\lambda}
\newcommand{\vark}{\varkappa}
\newcommand{\var}{\varphi}
\newcommand{\pr}{^\prime}
\renewcommand{\O}{\Omega}
\renewcommand{\div}{\textrm{div}}
\newcommand{\sgn}{\operatorname{sgn}}
\newcommand{\ind}{\hbox{\rm 1\hskip -4.5pt 1}}
\newcommand{\gen}{\mathcal{L}}
\newcommand{\convl}{\mathcal{T}}
\def\XXint#1#2#3{{\setbox0=\hbox{$#1{#2#3}{\int}$}
\vcenter{\hbox{$#2#3$}}\kern-.5\wd0}}
\def\Xint#1{\mathchoice
{\XXint\displaystyle\textstyle{#1}}%
{\XXint\textstyle\scriptstyle{#1}}%
{\XXint\scriptstyle\scriptscriptstyle{#1}}%
{\XXint\scriptscriptstyle\scriptscriptstyle{#1}}%
\!\int}
\def\dashint{\Xint-}
\def\Xint#1{\mathchoice
{\XXint\displaystyle\textstyle{#1}}%
{\XXint\textstyle\scriptstyle{#1}}%
{\XXint\scriptstyle\scriptscriptstyle{#1}}%
{\XXint\scriptscriptstyle\scriptscriptstyle{#1}}%
\!\!\int\!\!\!\!\int}
\def\XXint#1#2#3{{\setbox0=\hbox{$#1{#2#3}{\int\int}$}
\vcenter{\hbox{$#2#3$}}\kern-0.26\wd0}}
\def\dashint{\Xint-\!\!\!\!\!\!\!-}
\begin{document}

\title{Singular solutions to  the heat equations with nonlinear absorption
and Hardy potentials}

\author{
{\large Vitali Liskevich}\\
\small Department of Mathematics\\
\small University of Swansea\\
\small Swansea SA2 8PP, UK\\
{\tt v.a.liskevich@swansea.ac.uk}\\
\and
{\large Andrey Shishkov}\\
\small Institute of Applied\\
\small Mathematics and Mechanics\\
\small Donetsk 83114,  Ukraine\\
{\tt shishkov@iamm.ac.donetsk.ua}
\and
{\large Zeev Sobol}\\
\small Department of Mathematics\\
\small University of Swansea\\
\small Swansea SA2 8PP, UK\\
{\tt z.sobol@swansea.ac.uk}
}

\date{}

\maketitle

\begin{abstract}
We study the existence and nonexistence of singular solutions
to the  equation $u_t-\Delta u -
\frac{\kappa}{|x|^2}u+|x|^\alpha u|u|^{p-1}=0$, $p>1$, in
$\R^N\times[0,\infty)$, $N\ge 3$, with a singularity at the point $(0,0)$,
that is, nonnegative solutions satisfying $u(x,0)=0$ for
$x\ne0$, assuming that $\a>-2$ and $\kappa<\left(\frac{N-2}2\right)^2$.
The problem is transferred to
the one for a weighted Laplace-Beltrami operator with a
non-linear absorbtion, absorbing the Hardy potential in the
weight. A classification of a singular solution to the
weighted problem either as a {\it source solution} with a
multiple of the Dirac mass as initial datum, or as a unique
{\it very singular solution}, leads to a complete
classification of singular solutions to the original
problem, which exist if and only if $p<1+\frac{2(2+\alpha)}{N+2+\sqrt{(N-2)^2-4\kappa}}$. 
\end{abstract}



\section{Introduction and main results}

In this paper we study
nontrivial nonnegative solutions in $\R^N\times[0,\infty)$ to the equation
\begin{equation}
\label{e1.1} u_t-\Delta u - \frac{\kappa}{r^2}u+r^\alpha u|u|^{p-1}=0, 
\end{equation}
vanishing on $\R^N\times\{0\}\setminus \{(0,0)\}$
that is, (non-trivial) nonnegative solutions to
\eqref{e1.1} satisfying
\begin{equation}
\label{initial-null}
u(x,0)=0,\mbox{ that is, }\lim_{t\to 0}u(x,t)=0 \ \text{for}\ x\ne0.
\end{equation}
Here and below $r=|x|$,
and we always assume that
with $N\ge 3$, $\kappa<\left(\frac{N-2}2\right)^2$ and
$\alpha>-2$.
The behaviour of $u(x,t)$ as $(x,t)\to (0,0)$ is not prescribed, so   we study solutions with possible singularity at $(0,0)$.

We remark here for further reference that there is no ambiguity in the last definition since for a
solution $u$ of \eqref{e1.1}, $u(x,t)dx\to0$ as $t\to0$ in the
sense of weak-$*$ convergence of measures on
$\mathbb{R}^N\setminus\{0\}$ if and only if $u(x,t)\to0$ as
$t\to0$ locally uniformly in $x\in \mathbb{R}^N\setminus\{0\}$,
by the same argument as in \cite[Proof of Theorem 2,
steps 2,3]{bf83}.

The nonlinear heat equation with absorption
\begin{equation}
\label{e1.1m}
u_t-\Delta u + u|u|^{p-1}=0,
\end{equation}
i.e. \eqref{e1.1} with $\kappa=\alpha=0$, with bounded measures
as initial data was first studied by Brezis and Friedman in the
seminal work \cite{bf83}, where it was proved that the solution
to \eqref{e1.1m} with $u(x,0)=\vark\delta_0(x)$, a multiple of
the Dirac mass at $0$, exists and is unique if and only if
$0<p<1 + \frac2N$. The solution obtained has roughly speaking
the same behaviour as $t\to 0$
as the fundamental solution to the linear heat equation. Such
solutions are referred to as {\it source type solutions} ({\bf
SS}).

In \cite{bpt86} for $1<p<1 + \frac2N$ a new nonlinear
phenomenon was discovered, namely,  a new solution to
\eqref{e1.1m} satisfying \eqref{initial-null} was found. This
solution is more singular at $t\to 0$ than the fundamental
solution, it is self-similar of the form
$t^{-\frac1{p-1}}f(|x|/\sqrt{t})$, where $f$ is a unique
solution to a certain ordinary differential equation. This
solution in \cite{bpt86}, called {\it very singular solution},
was constructed by the shooting method. Later in \cite{ek87}
the existence
of { very singular} solutions was
proved
by the
variational approach. In \cite{kp85} the very singular solution
was shown to be a monotone limit of source type solutions.
A classification of all positive singular solutions to \eqref{e1.1m}
was given in \cite{Os}.
The cited papers state that for
$p\in\big(1,1+\frac2N\big)$ every singular solution to
\eqref{e1.1} satisfying \eqref{initial-null} is either source
type solution, satisfying $u(x,t)dx\to \vark\delta_0$ as
$t\to0$ in the sense of weak convergence of measures, with
$\vark=\lim\limits_{t\to0}\int\limits_{\{|x|<1\}} u(x,t)dx$, or
$u$ is the unique { very singular} solution ({\bf{VSS}}), the
only one satisfying $\lim\limits_{t\to0}\int\limits_{\{|x|<1\}}
u(x,t)dx=\infty$. For $p\geq 1 + \frac2N$ there are no
non-trivial positive solutions to~\eqref{e1.1m} satisfying
\eqref{initial-null}. Recently the problem of singular
solutions to \eqref{e1.1} in the case $\kappa=0, \, \a>-2$ was
treated by Shishkov and Veron \cite{sv07}.  They showed that
the qualitative picture is the same as for \eqref{e1.1m}, but
the critical exponent changes from $1 + \frac2N$ for equation
\eqref{e1.1} to $1 + \frac{2+\a}N$ for the equation $u_t-\Delta
u +r^\alpha u|u|^{p-1}=0$. In all the above result a crucial
role is played by the following {\it a priori} estimates of
Keller-Osserman type for a singular solution to~\eqref{e1.1}
(with $\kappa =0$), which is a generalization of the classical
one due to Brezis and Friedman~\cite{bf83} in the case $\a=0$:
\begin{equation}
\label{KO-BF}
u(x,t)\leq c\left(|x|^2+t\right)^{-\frac{2+\alpha}{2(p-1)}}.
\end{equation}
The critical exponent $1 + \frac{2+\a}N$, which reflects the nonexistence of singular solutions, can be seen as a result
of comparing the behaviour at $(0,0)$ of the fundamental solution to the linear problem with estimate \eqref{KO-BF}.
This plausible argument can no longer be applied to \eqref{e1.1} since the fundamental solution to the equation
$ u_t-\Delta u - \frac{\kappa}{r^2}u=0$ does not exist at $x=0$ (except for the case $\kappa=0$) for the reasons explained below.

During the last decades there is growing interest to the
elliptic and parabolic problems involving the inverse-square
potential (Hardy potential), stemming from its criticality.
Many qualitative properties of solutions are affected by the
presence of the Hardy potential, which leads to occurrence of a
number of interesting unusual phenomena \cite{APP,BG,MRT,VZ}. This is mainly
due to the properties of the corresponding linear equation
$u_t-\Delta u - \frac{\kappa}{r^2}u=0$, which are significantly
different from the properties of the heat equation. In
particular, the linear equation does not have the fundamental
solution at zero, i.e. the Cauchy problem with $\d_0(x)$ as the
initial datum has no solution, which can be seen from by now
well known two-sided estimates for the corresponding heat
kernel $p(t,x,y)$\cite{LS,MS,MT}:
\[
\frac{c_1}{t^{\frac N2}} \left(\frac{|x|}{|x|+\sqrt{t}}\right)^\l  \left(\frac{|x|}{|x|+\sqrt{t}}\right)^\l   e^{-\frac{|x-y|^2}{c_2t}}\le     p(t,x,y)
\le \frac{c_3}{t^{\frac N2}} \left(\frac{|x|}{|x|+\sqrt{t}}\right)^\l \left(\frac{|y|}{|y|+\sqrt{t}}\right)^\l      e^{-\frac{|x-y|^2}{c_4t}},
\]
where here and below
$\lambda=-\frac{N-2}{2}+\sqrt{\left(\frac{N-2}{2}\right)^2-\kappa}$
is the bigger root of the quadratic equation
$\lambda^2+\lambda(N-2)+\kappa=0$ and $c_1,c_2,c_3,c_4$ are
some positive constants. Moreover, for $\kappa>0$ and $N\ge 3$
the Cauchy problem is not well posed in $L^p(\R^N)$, $p\in
\big[1, \frac{N}{N+\lambda}\big)$ \cite{lsv02}. The semilinear
equations with Hardy potentials and nonlinear excitation were
recently studied in \cite{MRT} and some interesting
nonuniqueness phenomena were discovered, but to our knowledge
the corresponding equation with nonlinear absorption, equation
\eqref{e1.1}, has not been yet studied. This is exactly the aim
the present paper. In the course of this study we will reveal
several interesting phenomena peculiar to equation
\eqref{e1.1}. In order to overcome the difficulties described
above and to classify solutions to \eqref{e1.1} satisfying
\eqref{initial-null} we will use the technique of transference
to the weighted space, which is by now standard in the linear
theory and is called the {\it ground state} transform (cf.
\cite{LS,MRT,MS,MT}). We will outline it here.
\bigskip

Below and further on we use the following notation for the weighted Lebesgue and Sobolev
spaces.
For a weight $\var$ we denote
\begin{equation*}
\begin{split}
&L^p_\var(\R^N):=\{f:\R^N\to \R;\ \int_{\R^N} |f|^p\var dx<\infty\},\\
&H^1_\var(\R^N):=\{f:\R^N\to \R;\ \int_{\R^N}(|\n f|^2+|f|^2)\var dx<\infty\}.
\end{split}
\end{equation*}
Let $h\in H^1_{loc}$ satisfy $\Delta h + \frac \kappa{r^2}h =
0$, that is, $h=r^\lambda$, with $\lambda> -\frac{N-2}2$ as
above.
The change of variables $\tilde u:= u/h$ is a unitary
operator $L^2:=L^2(\mathbb{R}^N,dx)\to L^2_{h^2}:=L^2(\mathbb{R}^N,
h^2dx)$. Moreover, the quadratic form $\mathcal{E}(u)=\int
|\nabla u|^2dx - \int \frac \kappa{r^2}u^2 dx$ on $L^2$ is
isomorphic to a quadratic form $\mathcal{E}_h(\tilde u)=\int
|\nabla \tilde u|^2 h^2 dx$ on $L^2_{h^2}$, which is precisely stated in the next proposition

\begin{proposition}
Let $\kappa<\frac{(N-2)^2}4$. Let $\mathcal{E}$ be the closed quadratic form in $L^2$ defined by
\[
\mathcal{E}(u)=\int |\n u|^2dx - \int \frac{\kappa}{r^2}|u|^2dx, \quad u\in H^1(\mathbb{R}^N)
\]
and $H$ be the associated
self-adjoint operator, $H=-\Delta-\frac\kappa{r^2}$ (form-sum). Let $h\in
H^1_{\mathrm{loc}}(\mathbb{R}^N)$ be a positive weak solution
to the equation $Hh=0$.

Then the unitary map $U:\,L^2\to L^2_{h^2}$, $Uu=\frac uh$,
maps $H$ to the operator $-\Delta_{h^2}$ associated with the
form
\[
\mathcal{E}_h(u)=\|\n u\|^2_{L^2_{h^2}} ,~u\in H^1_{h^2}(\mathbb{R}^N).
\]

\end{proposition}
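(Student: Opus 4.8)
The plan is to reduce the statement to a single quadratic-form identity and then invoke the one-to-one correspondence between closed nonnegative forms and self-adjoint operators. Since $\kappa<\tfrac{(N-2)^2}{4}$, Hardy's inequality gives $\mathcal{E}\ge0$, so $\mathcal{E}$ and $\mathcal{E}_h$ are both closed and nonnegative; by the standard form-transport theorem it then suffices to prove that $U$ carries the form domain $H^1(\mathbb{R}^N)$ onto $H^1_{h^2}(\mathbb{R}^N)$ and that $\mathcal{E}_h(Uu)=\mathcal{E}(u)$ for every $u$ in the form domain, for then the operator associated with $\mathcal{E}_h$ is exactly $UHU^{*}$. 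I would establish both facts first on the common core $C_c^\infty(\mathbb{R}^N\setminus\{0\})$ and then pass to the closure.

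For the core identity, fix $u\in C_c^\infty(\mathbb{R}^N\setminus\{0\})$ and put $v=Uu=u/h$; because $h=r^\l$ is smooth and strictly positive off the origin, $v$ again belongs to $C_c^\infty(\mathbb{R}^N\setminus\{0\})$, so $U$ maps this core bijectively onto itself. Writing $\n u=\n(hv)=v\,\n h+h\,\n v$ gives $|\n u|^2=v^2|\n h|^2+2hv\,\n h\cdot\n v+h^2|\n v|^2$, and the cross term is integrated by parts,
\[
\int 2h v\,\n h\cdot\n v\,dx=\int h\,\n h\cdot\n(v^2)\,dx=-\int\bigl(|\n h|^2+h\,\Delta h\bigr)v^2\,dx,
\]
with no boundary contribution since the supports are compact and bounded away from $0$. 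Substituting the weak equation $\Delta h=-\frac{\kappa}{r^2}h$, the two copies of $\int v^2|\n h|^2\,dx$ cancel and there remains a term $\int\frac{\kappa}{r^2}h^2v^2\,dx$ which, because $u^2=h^2v^2$, is exactly the Hardy term subtracted in $\mathcal{E}(u)$; hence
\[
\mathcal{E}(u)=\int|\n u|^2\,dx-\int\frac{\kappa}{r^2}u^2\,dx=\int h^2|\n v|^2\,dx=\mathcal{E}_h(v).
\]

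The main obstacle is to upgrade this from the core to the full form domains, i.e.\ to show that $C_c^\infty(\mathbb{R}^N\setminus\{0\})$ is a core for both $\mathcal{E}$ and $\mathcal{E}_h$ and that the domains correspond. For $\mathcal{E}$ this rests on subcriticality: $\kappa<\tfrac{(N-2)^2}{4}$ is equivalent to $\l>-\tfrac{N-2}{2}$, which makes the origin a set of zero capacity for the form, so that $H$ is the form-closure of $\mathcal{E}$ from $C_c^\infty(\mathbb{R}^N\setminus\{0\})$ with form domain $H^1(\mathbb{R}^N)$. For $\mathcal{E}_h$ the same exponent condition keeps the weight $h^2=r^{2\l}$ locally integrable and ensures that smooth functions supported off the origin are dense in $H^1_{h^2}(\mathbb{R}^N)$. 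Once both core properties are in hand, the established identity together with the isometry $\|Uu\|_{L^2_{h^2}}=\|u\|_{L^2}$ shows that $U$ is an isometry for the two form-norms on the common core; since it maps that core onto itself and extends to a unitary of $L^2$ onto $L^2_{h^2}$, it extends to a unitary bijection $H^1(\mathbb{R}^N)\to H^1_{h^2}(\mathbb{R}^N)$ intertwining the forms, which is precisely the assertion $UHU^{*}=-\Delta_{h^2}$. The genuinely delicate points are the vanishing of the boundary term as the support is allowed to approach $0$ and the zero-capacity and density statements, both of which are exactly where the hypothesis $\kappa<\tfrac{(N-2)^2}{4}$ enters.
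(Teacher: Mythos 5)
Your proposal is correct and follows essentially the same route as the paper: establish the identity $\mathcal{E}(h\phi)=\|\nabla\phi\|^2_{L^2_{h^2}}$ on the common core $C_c^\infty(\mathbb{R}^N\setminus\{0\})$ via the product rule and the weak equation $\Delta h+\frac{\kappa}{r^2}h=0$ (your classical integration by parts off the origin is equivalent to the paper's use of $h\phi^2$ as a test function), then transfer to the full form domains using that this core is invariant under multiplication by $h^{\pm1}$ and is a core for both forms. The core and density claims you flag as delicate are precisely the ones the paper also asserts without detailed proof, so your treatment matches the paper's in both substance and level of rigor.
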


\begin{proof}
First observe that $h\in C^\infty(\mathbb{R}^N\setminus \{0\})$
and that $h>0$.
Hence $h^{\pm 1}C_c^\infty(\mathbb{R}^N\setminus
\{0\})=C_c^\infty(\mathbb{R}^N\setminus \{0\})$. Note that $C_c^\infty(\mathbb{R}^N\setminus \{0\})$ is a core of the form $\mathcal{E}$.
The image of $\mathcal{E}$ on $L^2_{h^2}$ is given by $\mathcal{E}_h(\varphi)=\mathcal{E}(h\varphi)$.
%
For $\phi\in
C_c^\infty(\mathbb{R}^N\setminus \{0\})$ one has

\[
|\nabla(h\phi)|^2 = h^2|\nabla \phi|^2 + 2h\phi\nabla\phi\nabla h + \phi^2|\nabla h|^2
= h^2|\nabla \phi|^2 + \nabla(h\phi^2)\nabla h.
\]
Taking into account that $h$ is a weak solution to the equation
$\Delta u+\frac{\kappa}{r^2}u=0$, we obtain
\[
\mathcal{E}(h\phi)=\int \left(|\n(h\phi)|^2-\frac{\kappa}{r^2}h^2\phi^2\right)dx
=\int h^2|\n \phi|^2dx+\left(\n(h\phi^2)\cdot \n h-\frac{\kappa}{r^2}(h\phi^2)h\right)dx=\|\phi\|^2_{L^2_{h^2}}.
\]
Since $C_c^\infty(\mathbb{R}^N\setminus \{0\})$ is invariant under multiplication by $h^{\pm 1}$, it is also a core for
$\mathcal{E}_h$. The assertion follows.
\end{proof}
As a result, the operator $-\Delta - \frac{\kappa}{r^2}$ on $L^2$ is
isomorphic to the weighted Laplacian $-\Delta_{h^2}:= -\frac1{h^2}\nabla\cdot
h^2\nabla$ on $L^2_{h^2}$. Recall that $h=r^\l$.
So equation \eqref{e1.1} takes the form
\begin{equation}
\label{e1.1a}
\tilde u_t- r^{-2\lambda}\nabla (r^{2\lambda} \nabla \tilde u)+r^{\beta}\tilde u|\tilde u|^{p-1}=0,
\end{equation}
with $\beta:=\alpha + \lambda(p-1)$.

This motivates the following problem about singular solutions with the weighted Laplacian
which is of independent interest.

Assume that $u$ is a weak   solution to the equation
\begin{equation}
\label{wparabolic}
    \partial_t u - r^{-2\lambda}\nabla\cdot (r^{2\lambda}\nabla u) + r^\beta |u|^{p-1}u=0,
\end{equation}
satisfying
\begin{equation}\label{null}
    \int\limits_{\mathbb{R}^N}u(t)\theta\,h^2dx\to 0\quad \text{as}\ t\to 0,\mbox{ for all }\theta\in C_c(\mathbb{R}^N\setminus\{0\}).
\end{equation}
We say that $u\in L^2_{loc}\Big((0,\infty);\, H^{1}_{loc}(\mathbb{R}^N,h^2dx)\Big)\cap
L^{p+1}_{loc}\Big(\mathbb{R}^N\times(0,\infty),r^\beta h^2dxdt\Big)$ is a weak solution to \eqref{wparabolic}
if it satisfies
the integral identity
\begin{eqnarray}
\nonumber
    &&\int u(t_1)\zeta(t_1)h^2dx + \int\limits_{t_0}^{t_1}\int (\nabla u\cdot \nabla \zeta)h^2dx\,dt
    + \int\limits_{t_0}^{t_1}\int r^\beta |u|^{p-1}u\zeta h^2dx\,dt\\
&& =  \int u(t_0)\zeta(t_0)h^2dx + \int\limits_{t_0}^{t_1}\int u \partial_t\zeta\, h^2dx\,dt\,
\end{eqnarray}
for all $\zeta \in C^{1}\big((0,\infty);\; C_c(\mathbb{R}^N)\big)\cap L^2_{loc}\big((0,\infty);\; H^1_{h^2}\big)$
 and $0<t_0<t_1<\infty$.

The main results concerning the solutions to \eqref{wparabolic} satisfying \eqref{null} are collected in the following theorem.

\begin{theorem}
\label{main1}
Let $p>1$. Let $\l,\b$ be any real numbers such that $\lambda>-\frac{N-2}2$,  $\beta>-2$. Denote
$p^{*}=1+\frac{2+\beta}{N + 2\lambda}$.
Then
\begin{enumerate}
      \item[(a)] for any weak solution $u$ to \eqref{wparabolic} satisfying \eqref{null}  the following
      Keller-Osserman type estimate holds: there exists $c>0$ such that for all $x\in \R^N$ and $t>0$
      $$|u(x,t)|\leq
      c\left(|x|^2+t\right)^{-\frac{2+\beta}{2(p-1)}};$$
      \item[(b)] for every singular solution to
          \eqref{wparabolic} there exists
          $\vark\in[0,\infty]$ such that $u(t)h^2dx\to
          \vark\delta_0$ as $t\to0$ in the weak-$*$
          topology of Radon measures;
      \item[(c)] for $p\geq p^*$, the only solution to \eqref{wparabolic} satisfying \eqref{null} is zero;
      \item[(d)] for $p<p^*$ and $\vark\in(0,\infty]$ there
          exists a unique singular solution $u_\vark$ to
          \eqref{wparabolic} satisfying
$u_\vark(t)h^2dx\to\vark\delta_0$ as $t\to0$ in the
weak-$*$ topology of Radon measures; 
\item[(e)] for $p<p^*$ and $\vark\in(0,\infty)$,
    $u_\vark$
    satisfies $u_\vark(\cdot,t)-\vark
    p^{h^2}(t,\cdot,0)\to0$ in
    $L^1\left(\mathbb{R}^N,r^{2\lambda}dx\right)$ as
    $t\to0$, where $p^{h^2}$ is the heat kernel for the
    weighted Laplacian
    $\Delta_{h^2}:=r^{-2\lambda}\nabla\cdot
    r^{2\lambda}\nabla $;
      \item[(f)] for $p<p^*$, 
      $u_\infty$
      is self-similar, that is $u_\infty(x,t)=t^{-\frac{2+\beta}{2(p-1)}}v(x/\sqrt
      t)$, with $v(x)\leq Ce^{-|x|^2/8}$ for some $C>0$.
    \end{enumerate}
\end{theorem}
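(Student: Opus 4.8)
The plan is to extract the self-similar structure from the scaling invariance of \eqref{wparabolic} together with the uniqueness already established in part (d), and then to obtain the Gaussian bound on the profile by a comparison argument.

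First I would record the scaling symmetry of the equation: writing $u_\ell(x,t):=\ell^{\frac{2+\b}{p-1}}u(\ell x,\ell^2 t)$ for $\ell>0$, a change of variables in the weak formulation shows that $u_\ell$ solves \eqref{wparabolic} whenever $u$ does, the exponent $\frac{2+\b}{p-1}$ being exactly the one that balances the scalings of the weighted diffusion and of the absorption term. Next I would verify that $u_\ell$ remains a singular solution in the sense of \eqref{null}: since $h=r^\l$ is homogeneous, for $\theta\in C_c(\R^N\setminus\{0\})$ one computes $\int u_\ell(t)\theta h^2dx=\ell^{\frac{2+\b}{p-1}-N-2\l}\int u_\infty(y,\ell^2 t)\theta(y/\ell)h^2(y)\,dy\to0$ as $t\to0$. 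The same change of variables applied to $\int_{\{|x|<1\}}u_\ell(t)h^2dx$ shows that the mass concentrating at the origin is still infinite, so by part (b) the associated $\vark$ for $u_\ell$ equals $\infty$. Thus $u_\ell$ is a singular solution with $u_\ell(t)h^2dx\to\infty\cdot\delta_0$, and the uniqueness in part (d) forces $u_\ell=u_\infty$ for every $\ell>0$. Taking $\ell=t^{-1/2}$ yields $u_\infty(x,t)=t^{-\frac{2+\b}{2(p-1)}}v(x/\sqrt t)$ with $v(\xi):=u_\infty(\xi,1)$.

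For the Gaussian bound I would pass to the profile equation. Substituting the self-similar ansatz into \eqref{wparabolic} shows that $v\ge0$ solves the stationary equation
\[
|\xi|^{-2\l}\n\cdot\big(|\xi|^{2\l}\n v\big)+\tfrac12\,\xi\cdot\n v+\tfrac{2+\b}{2(p-1)}\,v-|\xi|^\b v^p=0,
\]
equivalently $v''+\frac{N-1+2\l}{\rho}v'+\frac{\rho}{2}v'+\frac{2+\b}{2(p-1)}v-\rho^\b v^p=0$ in the radial variable $\rho=|\xi|$. The idea is then to compare $v$ with the explicit supersolution $w(\xi)=Ce^{-|\xi|^2/8}$: a direct computation gives, for the linear part of the operator, a leading term $-\frac{\rho^2}{16}w<0$ for large $\rho$, so that $w$ is a supersolution of the full equation once the nonpositive absorption term $-\rho^\b v^p$ is included; the exponent $\tfrac18$, rather than the sharp $\tfrac14$ suggested by the dominant balance $v''+\frac{\rho}{2}v'\approx0$, is precisely what leaves room to absorb the lower-order terms. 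Since $v$ is bounded and decaying (part (a) gives $v(\xi)\le c(|\xi|^2+1)^{-\frac{2+\b}{2(p-1)}}$), choosing $C$ large enough on a sphere $\{|\xi|=R\}$ and invoking the comparison principle yields $v\le w$, i.e.\ $v(\xi)\le Ce^{-|\xi|^2/8}$.

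The main obstacle I anticipate is twofold. In the first step, the delicate point is to justify that the rescaled solution $u_\ell$ inherits exactly the defining property $\vark=\infty$ of the very singular solution; this rests on the precise characterization of $u_\infty$ through parts (b) and (d) and on the homogeneity of the weight $h^2$ under the scaling. In the second step, the comparison argument must be made rigorous for a weighted elliptic operator whose weight $|\xi|^{2\l}$ degenerates or blows up at the origin (depending on the sign of $\l$) and whose zeroth-order coefficient $\tfrac{2+\b}{2(p-1)}$ has the unfavourable sign for a maximum principle; one must exploit that the quadratic term $-\frac{\rho^2}{16}$ dominates on the exterior region and rely on the interior regularity of $v$ away from $\xi=0$, together with the verification that $w$ dominates $v$ on the outer boundary. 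Establishing this comparison principle is where the real work lies.
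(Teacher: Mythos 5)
Your proposal addresses only assertion (f) of the theorem. Assertions (a)--(e) --- the Keller--Osserman estimate, the existence of the initial trace $\vark\delta_0$, the nonexistence result for $p\ge p^*$, the existence and uniqueness of $u_\vark$ for $\vark\in(0,\infty]$, and the asymptotics $u_\vark - \vark p^{h^2}(\cdot,\cdot,0)\to 0$ --- are taken for granted, and these constitute essentially all of the paper's work (the local energy estimates and mean-value inequality of Section 2, the removability theorem at the critical exponent, the linear theory for measures in Section 3, the construction of source solutions in Section 4, and the construction of the minimal and maximal VSS in Section 5). So as a proof of the stated theorem the proposal has a very large gap, independently of the merits of the argument for (f).

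Even restricted to (f), there is a circularity problem. You derive self-similarity of $u_\infty$ from the scaling invariance $T_\ell u(x,t)=\ell^{\frac{2+\b}{p-1}}u(\ell x,\ell^2 t)$ \emph{together with the uniqueness of the VSS from part (d)}. But in the paper the uniqueness of the VSS is itself proved by first showing that the minimal VSS $u_\infty=\lim_{\vark\to\infty}u_\vark$ and the maximal VSS $U_\infty=\sup\{u\}$ are each self-similar (the scaling argument applies directly to them because $T_\ell u_\vark=u_{\vark_\ell}$ and $T_\ell$ permutes the set of singular solutions, so it fixes the limit and the supremum), and only then showing $V_\infty\le v_\infty$ for the stationary profiles via a linearized comparison argument. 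So you cannot invoke (d) at $\vark=\infty$ to get (f) unless you supply an independent proof of that uniqueness; the correct order is self-similarity first, uniqueness second. Your Gaussian bound is morally the paper's Proposition on exponential decay (the substitution $w=ve^{|x|^2/8}$ versus direct comparison with $Ce^{-|\xi|^2/8}$ is the same computation), but you leave unproved exactly the step you flag as ``the real work'': the paper justifies the maximum-principle/test-function argument by identifying the profile with the variational minimizer in $H^1_K(\R^N)\cap L^{p+1}(r^\b K\,dx)$ (where $K=r^{2\l}e^{r^2/4}$), for which $(w-M)^+$ is an admissible test function; with only the polynomial a priori decay $v(\xi)\le c(1+|\xi|^2)^{-\frac{2+\b}{2(p-1)}}$ from (a), the comparison on an exterior domain with a degenerate weight and a zeroth-order term of the wrong sign does not close as written.
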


The proof of Theorem~\ref{main1} is given in Sections~2--5.
In fact, the results for the case of the initial data $\vark \d$ are obtained as a special
case of Radon measures as initial data, as it is done in \cite{bf83}. We extend the results
of Brezis and Friedman \cite{bf83} and V\'{e}ron~\cite[Chapter~6, Theorem~6.12]{Ver} to the case of equations
with the generator of a symmetric ultracontractive Feller semigroup in place of the Laplacian.
This allows for a much wider range of applications such as the fractional Laplacian, symmetric subelliptic operators
and many more (for further examples see, e.g. \cite{FOT}).

The solution to the original problem \eqref{e1.1}, \eqref{initial-null} is contained in the next corollary, 
which is a pull back of Theorem~\ref{main1}.

\begin{corollary}
Let $p>1$, $\kappa< \left(\frac{N-2}2\right)^2$, $\lambda=
-\frac{N-2}2 + \sqrt{\left(\frac{N-2}{2}\right)^2-\kappa}$,
$\alpha>-2$. Denote $p^{**}=1+\frac{2+\alpha}{N+\lambda}$.
Then
\begin{enumerate}
      \item[a)] for $p\geq p^{**}$, there are no singular solutions to
      \eqref{e1.1}. More precisely, the only solution to \eqref{e1.1} satisfying \eqref{initial-null} is zero;
      \item[(b)] for $p<p^{**}$ and $\vark\in(0,\infty]$, there exists a unique  singular
solution $u_\vark$ to \eqref{e1.1}
 satisfying
$\lim\limits_{t\to0}\int\limits_{\{|x|<\rho\}}
u_\vark(x,t)|x|^{\lambda}dx=\vark$ for all $\rho>0$. The map $\vark\to u_\vark$ is a bijection between
$(0,\infty]$ and the set of nontrivial singular solutions to \eqref{e1.1};
      \item[(c)] for $p<p^{**}$, the very singular solution $u_\infty$
      is self-similar, $u_\infty(x,t)=t^{-\frac{2+\alpha}{2(p-1)}}v(x/\sqrt
      t)$ with $v(x)\leq C|x|^\lambda e^{-|x|^2/8}$ for some $C>0$.
    \end{enumerate}
\end{corollary}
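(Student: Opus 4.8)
The plan is to deduce the Corollary from Theorem~\ref{main1} by the ground state (Doob) transform $\tilde u := u/h$ with $h = r^\lambda$, the same substitution that converts \eqref{e1.1} into the weighted equation \eqref{e1.1a}, i.e. into \eqref{wparabolic} with $\beta := \alpha + \lambda(p-1)$. Since $h = r^\lambda$ is positive and smooth on $\R^N\setminus\{0\}$, the map $u\mapsto u/h$ is a bijection preserving nonnegativity, and I would first verify that it is a bijection at the level of weak solutions: $u$ solves \eqref{e1.1} weakly iff $\tilde u = u/h$ satisfies the integral identity defining a weak solution of \eqref{wparabolic}. The spatial (stationary) part of this intertwining is exactly the content of the Proposition---multiplication by $h^{\pm1}$ preserves the core $C_c^\infty(\R^N\setminus\{0\})$ and carries $\mathcal{E}$ to $\mathcal{E}_h$; for the full parabolic identity one repeats that computation with time-dependent test functions, using the correspondence $\zeta\leftrightarrow h\zeta$ between admissible test functions. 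The absorption term transforms correctly because $\frac1h r^\alpha u|u|^{p-1} = r^{\alpha+\lambda(p-1)}\tilde u|\tilde u|^{p-1} = r^\beta\tilde u|\tilde u|^{p-1}$, and the weighted membership classes pull back to the natural ones for \eqref{e1.1} (e.g. $\int|\tilde u|^{p+1}r^\beta h^2 = \int|u|^{p+1}r^\alpha$).

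Next I would match initial traces and critical exponents. Since $\tilde u\,h^2dx = u\,h\,dx = u(x,t)|x|^\lambda dx$, condition \eqref{null} for $\tilde u$ tested against $\theta\in C_c(\R^N\setminus\{0\})$ is precisely \eqref{initial-null} for $u$ (using the equivalence of weak-$*$ and local uniform vanishing noted in the opening remark), and the convergence $\tilde u(t)h^2dx\to\vark\delta_0$ of Theorem~\ref{main1}(b),(d) becomes $u(t)|x|^\lambda dx\to\vark\delta_0$, which by the Brezis--Friedman mass argument is equivalent to $\lim_{t\to0}\int_{\{|x|<\rho\}}u(x,t)|x|^\lambda dx=\vark$ for all $\rho>0$, the normalization in part b). For the exponents, with $\beta=\alpha+\lambda(p-1)$ one has
\[
(p-1)-\frac{2+\beta}{N+2\lambda}=\frac{(p-1)(N+\lambda)-(2+\alpha)}{N+2\lambda},
\]
and since $N+2\lambda>0$ the left side is nonnegative iff $(p-1)(N+\lambda)\ge 2+\alpha$; hence $p\ge p^*\Longleftrightarrow p\ge p^{**}$ and $p<p^*\Longleftrightarrow p<p^{**}$, where $p^*=1+\frac{2+\beta}{N+2\lambda}$ and $p^{**}=1+\frac{2+\alpha}{N+\lambda}$. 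Theorem~\ref{main1}(c) then gives part a), and Theorem~\ref{main1}(b),(d) together with the bijectivity of $u\mapsto u/h$ gives the existence, uniqueness, and the asserted bijection $\vark\mapsto u_\vark$ onto nontrivial singular solutions in part b).

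For part c) I would pull back the self-similar profile of Theorem~\ref{main1}(f). Writing the weighted very singular solution as $w_\infty(x,t)=t^{-\frac{2+\beta}{2(p-1)}}v(x/\sqrt t)$ with $v(y)\le Ce^{-|y|^2/8}$, the original solution is $u_\infty=h\,w_\infty=|x|^\lambda w_\infty$. Using $|x|^\lambda=t^{\lambda/2}|x/\sqrt t|^\lambda$ and $\frac{2+\beta}{2(p-1)}=\frac\lambda2+\frac{2+\alpha}{2(p-1)}$, the prefactors combine to
\[
u_\infty(x,t)=t^{\frac\lambda2-\frac{2+\beta}{2(p-1)}}\,\big|x/\sqrt t\big|^\lambda v(x/\sqrt t)=t^{-\frac{2+\alpha}{2(p-1)}}\,\tilde v(x/\sqrt t),\qquad \tilde v(y):=|y|^\lambda v(y),
\]
so $u_\infty$ is self-similar with exponent $-\frac{2+\alpha}{2(p-1)}$ and $\tilde v(y)\le C|y|^\lambda e^{-|y|^2/8}$, which is part c).

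The one genuinely nontrivial step is the first: upgrading the stationary intertwining of the Proposition to the full parabolic weak formulation and checking that $u\mapsto u/h$ is a bijection between the weak-solution classes of \eqref{e1.1} and \eqref{wparabolic}, with the correct test-function spaces and integrability/membership conditions near $x=0$. Everything else is the algebra of exponents and the translation of measure-theoretic initial traces, both routine once the dictionary $u=|x|^\lambda\tilde u$ is fixed.
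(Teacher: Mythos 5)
Your proposal is correct and takes exactly the route the paper intends: the paper justifies this corollary only by the phrase ``a pull back of Theorem~\ref{main1}'', and your ground-state substitution $u=r^\lambda\tilde u$ turning \eqref{e1.1} into \eqref{wparabolic} with $\beta=\alpha+\lambda(p-1)$, the identification $\tilde u\,h^2dx=u\,|x|^\lambda dx$ of the initial trace, the sign computation showing $p\gtrless p^*\Leftrightarrow p\gtrless p^{**}$, and the rescaling $\tilde v(y)=|y|^\lambda v(y)$ for the self-similar profile are precisely the omitted details. The one hypothesis of Theorem~\ref{main1} you should record explicitly is $\beta>-2$: for $p\le p^{**}$ (hence for parts (b), (c) and the critical case of (a)) this holds automatically since $\beta+2\ge(\alpha+2)\frac{N+2\lambda}{N+\lambda}>0$, which covers the argument after the supercritical range is reduced to the critical one.
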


\begin{remark}
\label{lebesgue}
The above corollary shows that the Lebesgue measure does
not allow for a classification of singular solutions to
\eqref{e1.1}. To demonstrate this
let $\alpha$, $\kappa$, $\lambda$ and $p^{**}$ be as in the
preceding corollary. 
\begin{enumerate}
\item For $\kappa<0$ (hence $\lambda>0$) and
      $p\in(1,p^{**})$ every non-trivial positive singular
      solution $u$ to \eqref{e1.1} satisfies $u(x,t)=O(|x|^\lambda)$ as $x\to0$ for all $t>0$, and
      $\int\limits_{\{|x|<\rho\}} u(x,t)dx \to \infty$ as
      $t\to0$ for all $\rho>0$.
  \item for $\kappa>0$ (hence $\lambda<0$) and
      $p\in(1,p^{**})$, given $\vark\in(0,\infty)$, one has  $\int\limits_{\{|x|<\rho\}} u_\vark(x,t)dx \to 0$
      as $t\to0$ for all $\rho>0$. Moreover, $\int\limits_{\{|x|<\rho\}} u_\infty(x,t)dx
      \to 0$ as $t\to0$ for all $\rho>0$ if $p\in
      (1+\frac{2+\alpha}{N},p^{**})$. So in this case we have the initial datum zero with nonzero solution,
      and we encounter the {\bf non-uniqueness} phenomenon.
  \item For $\kappa>0$ (hence $\lambda<0$) one has $\int\limits_{\{|x|<\rho\}}
      u_\infty(x,t)dx \to \infty$ for all $\rho>0$ if $p\in
      (1,1+\frac{2+\alpha}{N})$.
  \item For $\kappa>0$ (hence $\lambda<0$) and $\rho>0$ one has $\int\limits_{\{|x|<\rho\}}
      u_\infty(x,t)dx \to c<\infty$ for $p= 1+\frac{2+\alpha}{N}$. The limit $c$ is independent of $\rho$.
      So this is the only case with the initial datum $c\d_0$.
\end{enumerate}
\end{remark}

Further on we use the following notation. For $p\in (1,\infty)$, $p'$ is the conjugate exponent, that is
$
p'=\frac{p}{p-1}
$.
$\ind_X$ stands for the characteristic function of the set $X$,
$B_R:=\{x\in \R^N: |x|\le R\}$,  $D_R:=\{(x,t)\in \R^N\times
(0,\infty): R^2\le |x|^2 + t\le 4R_0^2\}.$

For $\delta>0$, let $T_\delta$ denote the Steklov average,
$T_\delta u(t) =
\frac1{2\delta}\int\limits_{t-\delta}^{t+\delta}u(s)ds =
\frac1{2\delta}\int\limits_{-\delta}^{\delta}u(s+t)ds$.

We finish this section with the proposition classifying singular solutions to \eqref{wparabolic} i.e, the solutions satisfying
\eqref{null}. This is an analogue of \cite[Lemma 1.1]{kpv89}.

\begin{proposition}
\label{classify}
Let
$
u\in L^2_{loc}\Big((0,\infty);\,
H^{1}_{loc}(\mathbb{R}^N,h^2dx)\Big)\cap L^{p+1}_{loc}\Big(\mathbb{R}^N\times(0,\infty),r^\beta h^2dxdt\Big)
$
be a non-trivial
positive solution to \eqref{wparabolic} satisfying
\eqref{null}.
Then, for any $\rho>0$, there exists the limit
\[
\lim\limits_{t\to0}\int\limits_{B_\rho} u(x,t)|x|^{2\lambda}dx=:\vark\leq+\infty.
\]
The limit is independent of $\rho>0$.
\end{proposition}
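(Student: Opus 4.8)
The plan is to track the weighted mass of $u$ on a ball and show that, after subtracting a term linear in $t$, it is monotone near $t=0$; the null condition \eqref{null} then yields the independence of $\rho$. Throughout set $M_\rho(t):=\int_{B_\rho}u(x,t)|x|^{2\lambda}\,dx=\int_{B_\rho}u(t)h^2\,dx$.

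First I would dispose of the independence statement, assuming existence. For $0<\rho_1<\rho_2$ the difference $M_{\rho_2}(t)-M_{\rho_1}(t)=\int_{B_{\rho_2}\setminus B_{\rho_1}}u(t)h^2\,dx$ is the mass carried by an annulus whose closure is compact and stays away from the origin. Squeezing $\ind_{B_{\rho_2}\setminus B_{\rho_1}}$ between two functions in $C_c(\R^N\setminus\{0\})$ and using $u\ge0$ together with \eqref{null}, this difference tends to $0$ as $t\to0$. Hence $\limsup_{t\to0}M_\rho(t)$ and $\liminf_{t\to0}M_\rho(t)$ do not depend on $\rho$; in particular, once the limit is shown to exist for one $\rho$, it exists for all of them with the same value in $[0,+\infty]$.

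For existence I would fix $\rho$ and a cutoff $\theta\in C_c^\infty(\R^N)$ with $\ind_{B_\rho}\le\theta\le\ind_{B_{2\rho}}$, and test the weak formulation of \eqref{wparabolic} with the time-independent $\zeta=\theta$. Writing $M_\theta(t):=\int u(t)\theta h^2\,dx$ and integrating the diffusion term by parts (harmless, since $\nabla\theta$ vanishes near the origin where $h^2=r^{2\lambda}$ is singular), the identity becomes
\[
M_\theta(t_1)-M_\theta(t_0)=\int_{t_0}^{t_1}\!\!\int u\,h^2\,\Delta_{h^2}\theta\,dx\,ds-\int_{t_0}^{t_1}\!\!\int r^\beta u^p\,\theta\, h^2\,dx\,ds .
\]
The absorption integral is nonnegative because $u\ge0$ and $\theta\ge0$. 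In the diffusion integral $\Delta_{h^2}\theta$ is supported in $B_{2\rho}\setminus B_\rho$, where part (a) of Theorem~\ref{main1} gives $u(x,s)\le c(|x|^2+s)^{-\frac{2+\beta}{2(p-1)}}\le c\rho^{-\frac{2+\beta}{p-1}}$ uniformly in $s$. Consequently $\big|\int u\, h^2\,\Delta_{h^2}\theta\,dx\big|\le C_\rho$ with $C_\rho$ independent of $s$, whence $M_\theta(t_1)-M_\theta(t_0)\le C_\rho\,(t_1-t_0)$ for $0<t_0<t_1\le1$. Thus $t\mapsto M_\theta(t)-C_\rho t$ is nonincreasing on $(0,1]$, so $\lim_{t\to0}M_\theta(t)$ exists in $[0,+\infty]$. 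Finally $M_\rho(t)\le M_\theta(t)\le M_{2\rho}(t)$ while $M_{2\rho}(t)-M_\rho(t)\to0$ by the first step, so $M_\rho(t)$ has the same limit; combined with the independence above, this proves the claim.

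I expect the main technical obstacle to be the rigorous justification of the mass identity at the level of weak solutions: one must know that $M_\theta(t)=\int u(t)\theta h^2\,dx$ is well defined for (almost) every $t$ and that the boundary terms in the weak formulation are attained. This is precisely the role of the Steklov averages $T_\delta$ introduced above—one derives the identity for $T_\delta u$, uses $u\ge0$ to keep the absorption term under control, and passes to the limit $\delta\to0$. The remaining estimates are elementary once the uniform bound from Theorem~\ref{main1}(a) is in hand.
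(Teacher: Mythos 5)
Your proof is correct, but it takes a genuinely different route from the paper's. For the existence of the limit, the paper decomposes $u=(u-1)^+ + u\wedge 1$, notes via Lemma~\ref{local-quali} that $u<1$ on the annulus $B_\rho\setminus B_{\rho/2}$ for small times so that $u_1=(u-1)^+\ind_{B_\rho}$ is an admissible $H^1$-function, and then tests the equation with $\xi_n(u_1)$ (approximations of $\ind_{\{u_1>0\}}$); since $\nabla\xi_n(u_1)\cdot\nabla u\ge0$ and the absorption term is nonnegative, $t\mapsto\int_{B_\rho}(u-1)^+(t)\,h^2dx$ is genuinely nonincreasing, while $\int_{B_\rho}u\wedge1\,h^2dx\to0$ by dominated convergence. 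You instead test with a fixed spatial cutoff $\theta$ and obtain monotonicity of $M_\theta(t)-C_\rho t$, paying for the flux through the annulus with a term linear in $t$. Both arguments work; the paper's version needs only the qualitative fact that $u$ is small on a fixed annulus for small times and requires no quantitative bound, whereas yours needs a time-uniform bound for $u$ on $B_{2\rho}\setminus B_\rho$. For that bound you invoke Theorem~\ref{main1}(a); this is not circular (Proposition~\ref{local-quanti} does not use Proposition~\ref{classify}), but it is heavier than necessary --- Lemma~\ref{local-quali} already gives that the zero-extension of $u$ is $C^{2,1}$ up to $t=0$ on compact sets away from the origin, hence bounded on $(B_{2\rho}\setminus B_\rho)\times(0,1]$, which both justifies your integration by parts and yields $C_\rho$ without appealing to the Keller--Osserman estimate. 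Your concluding worry about the mass identity is unfounded here: the paper's Definition of a weak solution already asserts the integral identity at every pair of times $0<t_0<t_1$ for time-independent $\zeta=\theta\in C_c^\infty\subset H^1_{h^2}$, so no Steklov regularization is needed. The independence-of-$\rho$ step is essentially the paper's, with the squeeze by $C_c(\mathbb{R}^N\setminus\{0\})$ functions replacing the uniform convergence of Lemma~\ref{local-quali}.
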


For the proof we need the following lemma which will also be used further on.

\begin{lemma}\label{local-quali}
Let $u\in L^2_{loc}\Big((0,\infty);\,
H^{1}_{loc}(\mathbb{R}^N,h^2dx)\Big)\cap
L^{p+1}_{loc}\Big(\mathbb{R}^N\times(0,\infty),r^\beta
h^2dxdt\Big)$ be a ~solution (sub-solution) to \eqref{wparabolic} satisfying \eqref{null}.
Let $\tilde u$ denote the continuation of $u$ into the
semi-space $\mathbb{R}^N\times(-\infty,0)$ by zero.
Then, for every
domain $\Omega$ such that $\overline
\Omega \Subset \mathbb{R}^N\setminus\{0\}$, the function
$\tilde u$ is a solution (sub-solution) to \eqref{wparabolic} in
$\Omega\times \mathbb{R}$ and, moreover, $u\in
L^\infty_{loc}\Big(\mathbb{R}^N\times(0,\infty)\Big)$.

In particular, if $u$ is a solution to \eqref{wparabolic} then
$\tilde u\in C^{2,1}(\Omega\times \mathbb{R})$ and $u(x,t)\to
0$ as $t\to0$ uniformly in $x\in\Omega$.
\end{lemma}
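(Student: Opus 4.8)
The plan is to work away from the singularity, where \eqref{wparabolic} is a uniformly parabolic equation with smooth coefficients, and to combine an a priori Keller--Osserman bound with standard parabolic regularity and the vanishing initial trace \eqref{null}. First I would fix $\Omega$ with $\overline\Omega\Subset\R^N\setminus\{0\}$, so that $r^{2\lambda}$ and $r^\beta$ are smooth and bounded above and below on $\overline\Omega$; since $r^{-2\lambda}\nabla\cdot(r^{2\lambda}\nabla u)=\Delta u+2\lambda r^{-2}x\cdot\nabla u$, equation \eqref{wparabolic} reads $\partial_t u-\Delta u-b\cdot\nabla u=-r^\beta|u|^{p-1}u$ on $\Omega$ with smooth bounded drift $b=2\lambda r^{-2}x$, a uniformly parabolic equation with a source of the correct sign.

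The main analytic input is local boundedness, which is precisely estimate (a) of Theorem~\ref{main1}: comparison of $u$ with the self-similar supersolution $W(x,t)=c(|x|^2+t)^{-(2+\beta)/(2(p-1))}$ (for $c$ large), valid by the monotonicity of $s\mapsto|s|^{p-1}s$, yields $|u|\le W$; I would invoke it here, or, to avoid any circularity, reprove the needed local version on a cylinder $Q\Subset(\R^N\setminus\{0\})\times(0,\infty)$ by the same barrier (for a sub-solution this still gives the one-sided bound $u\le W$, which is what is used for the nonnegative solutions of Proposition~\ref{classify}). This gives $u\in L^\infty_{loc}(\R^N\times(0,\infty))$, and, crucially, since $\overline\Omega$ is bounded away from the origin, $W$ stays bounded on $\overline\Omega$ as $t\to0$, so $u$ is bounded on $\Omega\times(0,\tau)$ \emph{uniformly up to} $t=0$.

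Next I would extend $u$ by zero and verify that $\tilde u$ is a weak (sub-)solution on $\Omega\times\mathbb{R}$. Testing the integral identity of \eqref{wparabolic} with $\zeta\in C^\infty_c(\Omega\times\mathbb{R})$ and using $\tilde u\equiv0$ for $t\le0$, the only new contribution is the initial trace term $\lim_{s\to0^+}\int_\Omega u(s)\zeta(s)h^2\,dx$. Writing $\zeta(s)=\zeta(0)+(\zeta(s)-\zeta(0))$, the first piece tends to $0$ by \eqref{null} with $\theta=\zeta(0)\in C_c(\R^N\setminus\{0\})$, while the second is bounded by $\|\zeta(s)-\zeta(0)\|_\infty\int_\Omega|u(s)|h^2\,dx\to0$, the $L^1$-bound coming from the uniform estimate above (and, when $u\ge0$, directly from \eqref{null}, since vague convergence of the positive measures $u(s)h^2dx$ forces $\int_K u(s)h^2dx\to0$ on compacta). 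Hence the boundary term vanishes, the interior integrals over $(s,t_1)$ converge to those over $(0,t_1)$ by dominated convergence, and the weak identity (respectively, the inequality tested with $\zeta\ge0$ for sub-solutions) passes to $\tilde u$ on $\Omega\times\mathbb{R}$.

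Finally, for a genuine solution, $\tilde u$ is a bounded weak solution of a uniformly parabolic equation on $\Omega\times\mathbb{R}$ with bounded right-hand side $-r^\beta|\tilde u|^{p-1}\tilde u$; De Giorgi--Nash--Moser gives parabolic Hölder continuity, after which the right-hand side is Hölder (as $s\mapsto|s|^{p-1}s$ is locally Lipschitz for $p>1$) and Schauder estimates upgrade $\tilde u$ to $C^{2,1}(\Omega\times\mathbb{R})$. Since $\tilde u\equiv0$ for $t<0$ and $\tilde u$ is continuous across $t=0$, one gets $\tilde u(\cdot,0)=0$ and hence $u(x,t)\to0$ as $t\to0$ uniformly on compact subsets of $\Omega$. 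I expect the main obstacle to be the a priori local boundedness step: it is where the nonlinear absorption is genuinely used, and it must be available on regions reaching down to $t=0$ so that the bound is uniform there; once this uniform bound is in hand, both the extension across $t=0$ and the regularity bootstrap are routine parabolic theory.
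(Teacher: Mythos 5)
Your outline captures the part of the lemma that lives away from the spatial origin (there the paper simply delegates to \cite{bf83}, steps 2--3 of the proof of their Theorem~2, since on $\overline\Omega\Subset\R^N\setminus\{0\}$ the weights $h^2$ and $r^\beta$ are smooth and bounded above and below), and your extension-by-zero computation and the final Schauder bootstrap are fine in spirit. But there are two genuine gaps. First, the assertion $u\in L^\infty_{loc}\big(\R^N\times(0,\infty)\big)$ includes neighbourhoods of $x=0$ for $t>0$, where the operator is singular; your non-circular fallback --- a Keller--Osserman bound on cylinders $Q\Subset(\R^N\setminus\{0\})\times(0,\infty)$ --- never reaches $x=0$, while invoking Theorem~\ref{main1}(a) is circular, because the proof of Proposition~\ref{local-quanti} uses the present lemma to legitimise its test functions. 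The paper closes this with a separate argument you omit: choose $\tau$ with $u(\tau)\in H^1_{h^2}(B_{2R})$, note that $u$ is bounded on $\partial B_{2R}\times(\tau,2t_1)$ by the first part, solve the \emph{linear} weighted heat equation in $B_{2R}\times(\tau,2t_1)$ with these boundary data, and conclude by local boundedness for that linear equation (Grigor'yan--Saloff-Coste) together with the maximum principle, which gives $|u|\le|w|$ since the absorption has a good sign.

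Second, your key estimate ``$|u|\le W$ with $W=c(|x|^2+t)^{-(2+\beta)/(2(p-1))}$ by comparison'' is not justified as stated: $W$ fails to be a supersolution near the $t$-axis when $\beta>0$ (there $r^\beta W^p$ degenerates relative to $\partial_t W-\Delta_{h^2}W$), and on a bounded cylinder you have no control of $u$ on the lateral boundary, so the comparison cannot be run there either. The bound actually needed --- boundedness on $\Omega\times(0,\tau)$ uniformly up to $t=0$ --- is obtained in \cite{bf83} not from a self-similar barrier but by combining the vanishing of $\int_K u(s)h^2dx$ (from \eqref{null}) with a local $L^1\to L^\infty$ estimate for subsolutions of the, by then uniformly parabolic, equation; that is also what yields $\nabla u\in L^2$ on $\Omega$ down to $t=0$ via a Caccioppoli estimate, a fact your ``dominated convergence'' step silently assumes even though the hypothesis only gives $u\in L^2_{loc}\big((0,\infty);H^1_{loc}\big)$ with the time interval open at $0$.
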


\begin{proof}

Given $\Omega$ such that $\overline \Omega \Subset
\mathbb{R}^N\setminus\{0\}$,  observe that $h,r^\beta \in
C^{\infty}(\Omega)$ and there exists a constant $c>1$ such that
$\frac1c< h^2, r^\beta < c$ on $\Omega$. Hence the first
assertion follows from \cite[Proof of Theorem~2, steps~2,3]{bf83}.

To prove the second assertion, consider a cylinder
$B_R\times(t_0,t_1)$, $R>0$, $0<t_0<t_1$. Then there exists
$\tau\in (0,\frac12t_0)$ such that $u(\tau)\in
H^1_{h^2}(B_{2R})$. Moreover, $u$ is bounded on $\partial
B_{2R}\times (\tau,2t_1)$, by the first assertion. Let the
function $w$ be the solution to the problem
\[
\begin{cases}
\partial_t w - h^{-2}\nabla\cdot(h^2\nabla w) = 0 & \mbox{ in } B_{2R}\times (\tau,2t_1),\\
w(x,t) = u (x,t), & (x,t)\in B_{2R}\times\{\tau\}\cup \partial B_{2R}\times (\tau,2t_1).
\end{cases}
\]
Then $w$ is bounded on $B_R\times(t_0,t_1)$ \cite{gs-c05} and, by the maximum
principle, $|u|\leq |w|.$
\end{proof}

\noindent{\it Proof of Proposition~\ref{classify}}. 
First we show that if the limit exists, then it is independent of $\rho$.
Indeed, for $R>\rho$,
\[
\lim\limits_{t\to0}\int\limits_{B_R\setminus B_\rho} u\, h^2dx =0
\]
since $u(x,t)\to0$ as $t\to0$ uniformly in $x \in B_R\setminus
B_\rho$, by Lemma~\ref{local-quali}.

Now we show the existence of the limit. Note that $u=(u-1)^+ +
u\wedge 1$. Given $\rho>0$, Lemma~\ref{local-quali} implies
that there exists $T_\rho>0$ such that $u(x,t)<1$ for all $x\in
B_\rho\setminus B_{\rho/2}$, $t\in[0,T_\rho]$. Hence
$u_1:=(u-1)^+\ind_{B_\rho}\in L^2_{loc}\Big((0,T_\rho);\,
H^{1}_{h^2}(\mathbb{R}^N)\Big)$.

Next we integrate \eqref{wparabolic} over the set $\{u_1>0\}$.
To do this, consider the sequence $(\xi_n)_n$, $\xi_n:\R^+\to \R^+$, $\xi_n(s):=(ns)^+\wedge1$.
Then $(\xi_n)_n$ is a sequence of bounded non--decreasing Lipschitz
functions approximating $\ind_{(0,\infty)}$, so that $\xi_n(u_1)$
can be used as a test function for \eqref{wparabolic}. For
$0<s<t<\infty$ we have
\[
\int_s^t \int_{B_\rho}\xi_n(u_1) \partial_t u\,h^2dx\,dt = -\int_s^t \int_{B_\rho}\nabla\xi_n(u_1)\cdot \nabla u\,h^2dx\,dt
- \int_s^t \int_{B_\rho}\xi(u_1) u^pr^{\beta}h^2dx\,dt.
\]
Since $\nabla\xi_n(u_1)\cdot \nabla u \ge 0$,  it follows that $\int_s^t \int_{B_\rho}\xi_n(u_1) \partial_t u
r^{2\lambda}dx\,dt\leq 0.$ Note that $\xi_n(u_1)
\partial_t u = \partial_t \Xi_n(u_1)$, where $\Xi_n(s)=\int_{0}^s\xi_n(\tau)d\tau \to s^+$ as $n\to\infty$. 
Then
\[
\int_{B_\rho}\Xi_n(u_1)(t) r^{2\lambda}dx \leq \int_{B_\rho}\Xi_n(u_1)(s) r^{2\lambda}dx.
\]
Passing to the limit as $n\to\infty$ we obtain that
\[
\int_{B_\rho}(u-1)^+(t) h^2dx \leq \int_{B_\rho}(u-1)^+(s) h^2dx.
\]
Due to this monotonicity,
\[
\int_{B_\rho}(u-1)^+(t) h^2dx \to \vark\leq+\infty\mbox{ as }t\to0.
\]
Finally, $\int_{B_\rho}u\wedge 1\, h^2dx\to0$ by the Lebesgue
dominated convergence theorem. \qed

\begin{remark}
If $\int_{B_\rho}u(t)r^{2\lambda}dx \to \vark<\infty$ as $t\to0$ then
$u(t)h^2dx\to \vark\delta_0$ as $t\to0$ in the weak-$*$ topology of
Radon measures. Indeed, for any $\theta\in C_c(\mathbb{R}^N)$
there exists $R>0$ such that $\mathrm{supp}\,\theta\subset
B_R$, and, for any $\epsilon>0$ there exists $\rho>0$ such that
$|\theta(x)-\theta(0)|<\epsilon$ for all $x\in B_\rho$.
Then
\begin{equation}
\label{eq-rem}
\int_{\mathbb{R}^N}\theta u(t)h^{2}dx
= \theta(0)\int_{B_R}u(t)h^{2}dx
+ \int_{B_R\setminus B_\rho}(\theta -\theta(0))u(t)h^{2}dx
+ \int_{B_\rho}(\theta -\theta(0))u(t)h^{2}dx.
\end{equation}
Now $\int_{B_R\setminus B_\rho}(\theta -\theta(0))u(t)h^{2}dx\to0$
as $t\to0$ since $u(x,t)\to0$ as $t\to0$ uniformly in $x\in
B_R\setminus B_\rho$ and
\[
\limsup\limits_{t\to0}\left|\int_{B_\rho}(\theta -\theta(0))u(t)h^{2}dx\right|
\leq \epsilon \vark.
\]
Therefore it follows from \eqref{eq-rem} that, for any $\epsilon>0$,
\[
\limsup\limits_{t\to0}\left|\int_{\mathbb{R}^N}\theta u(t)h^{2}dx - \theta(0)\vark\right|< \epsilon \vark.
\]
\end{remark}

\bigskip
Proposition~\ref{classify} gives rise to the following definition giving classification to singular solutions to \eqref{wparabolic}.
\begin{definition}\label{ss-vss}
A non-trivial positive solution $u$ to \eqref{wparabolic}
satisfying \eqref{null} is called a {\it source-type solution} (SS) if
$\int_{B_\rho} u(t)r^{2\lambda}dx\to \vark$ with some finite $\vark>0$.
The solution $u$ is called a {\it very singular solution} (VSS) if
$\int_{B_\rho} u(t)r^{2\lambda}dx\to \infty$.
\end{definition}

The rest of the paper
is organized as follows.
In Section~\ref{sec2} we prove a-priory estimates of Keller-Osserman type and show that in the critical and supercritical range of values of $p$
the only solution to equation~\eqref{wparabolic} satisfying \eqref{null} is zero. In Section~3 we study general linear inhomogeneous  evolution
equations with a generator of a Feller semigroup and with Radon measures in the right hand side and as initial data. These results are applied in Section~4, where the
general semilinear equations with Radon measures as initial data are studied. The results are then applied to equation~\eqref{wparabolic}. Very singular solutions
to equation~\eqref{wparabolic} are discussed in Section~5.
Finally, in Appendix we give a version of the Hardy inequality and provide an auxiliary compactness result.

\section{A-priori estimates and nonexistence result}
\label{sec2}

We start with a-priori estimates for sub-solutions to \eqref{wparabolic} similar to that obtained in \cite{bf83}.
%
We use the notation
\[
D_{\rho,R}:=\{(x,t)\in \R^N\times \R_+\,:\, \rho^2<|x|^2+t<R^2\}.
\]

\begin{proposition}\label{local-quanti}
Let $0<R_0<\frac 14 R_1$ and $u$ be a sub-solution to
\eqref{wparabolic} in the paraboloid layer $D_{R_0,R_1}$ such that $u(x,t)\to u_0$ as $t\to0$ in
$L^2_{h^2}\big(B_{R_1}\setminus B_{R_0}\big)$. Then, for all
$R_0<\rho<\frac12 R<\frac12R_1$, 
\begin{equation}
\label{int-KO}
\iint\limits_{D_{2\rho,R}} \left[|\nabla u|^2 + r^\beta u^{p+ 1}\right]h^2 dx\,dt
\leq c \left(\rho^{N+2\lambda - 2\frac{2+\beta}{p-1}} + R^{N+2\lambda - 2\frac{2+\beta}{p-1}} + \|u_0\|_{L^2_{h^2}}^2\right).
\end{equation}
Moreover,  for $u_0=0$ one has, $u(x,t)\le c(|x|^2 +t)^{-\frac{2+\beta}{2(p-1)}}$ for
$4R_0^2<|x|^2 +t<R_1^2$.
\end{proposition}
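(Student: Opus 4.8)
The plan is to prove the integral (energy) estimate \eqref{int-KO} first, and then deduce the pointwise bound from it by a standard local-boundedness argument. For the energy estimate I would work on the paraboloid layer $D_{2\rho,R}$ and use a test function built from the solution itself, following the Keller--Osserman philosophy adapted to the weight $h^2=r^{2\lambda}$. Concretely, I would choose a cutoff $\eta$ that is a smooth function of $|x|^2+t$, supported in $D_{\rho,R}$, equal to $1$ on $D_{2\rho,R/2}$ (or a comparable inner layer), and I would test \eqref{wparabolic} against something like $u\,\eta^{2}$ (or, to control the nonlinear term sharply, a power-type test function $u^{q}\eta^{2}$ tuned to $p$). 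Integrating the weak formulation over a time slab and integrating by parts moves one gradient onto $\eta$; the absorption term $r^\beta u^{p+1}\eta^2 h^2$ appears on the left with a good sign, and the parabolic (time-derivative) term is handled by the Steklov average $T_\delta$ introduced in the notation, so that $\partial_t u$ may be treated rigorously.

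The key mechanism is that the nonlinear absorption term is \emph{coercive}: after applying Young's inequality to absorb $|\nabla u|\,|\nabla\eta|$ and the cross terms into a small multiple of $\iint |\nabla u|^2 h^2$ plus a large multiple of $\iint r^\beta u^{p+1}h^2$, the surviving right-hand side consists only of integrals of powers of $\eta$, $|\nabla\eta|$ and $\partial_t\eta$ against the weight $h^2$, with no $u$-dependence. Because $\eta$ depends on $|x|^2+t$ and lives on the scale $\rho$ (resp. $R$), each derivative of $\eta$ costs a factor $\rho^{-2}$ (resp. $R^{-2}$), and the volume of the layer with respect to $h^2\,dx\,dt=r^{2\lambda}dx\,dt$ scales like $\rho^{N+2\lambda+2}$. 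Balancing these scalings via Young's inequality with the self-improving exponent $\frac{2+\beta}{p-1}$ produces exactly the powers $\rho^{\,N+2\lambda-2\frac{2+\beta}{p-1}}$ and $R^{\,N+2\lambda-2\frac{2+\beta}{p-1}}$ in \eqref{int-KO}; the term $\|u_0\|_{L^2_{h^2}}^2$ comes from the contribution of the initial slice in the time-integration by parts, using the hypothesis that $u(\cdot,t)\to u_0$ in $L^2_{h^2}(B_{R_1}\setminus B_{R_0})$.

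For the pointwise statement in the case $u_0=0$, I would exploit the fact that the weight is locally bounded above and below away from the origin, so on any subparaboloid away from $x=0$ the equation is uniformly parabolic with smooth coefficients; by Lemma~\ref{local-quali} the (sub)solution is locally bounded and the usual De Giorgi--Nash--Moser local maximum principle applies. The standard device is a rescaling centered at a point $(x_0,t_0)$ with $4R_0^2<|x_0|^2+t_0<R_1^2$: set $\sigma^2=|x_0|^2+t_0$ and rescale $u_\sigma(y,s)=\sigma^{\frac{2+\beta}{p-1}}u(x_0+\sigma y,\,t_0+\sigma^2 s)$, which is again a (sub)solution of an equation of the same type (the homogeneity $\beta$ and the self-similar exponent $\frac{2+\beta}{2(p-1)}$ are chosen precisely so the rescaled equation is invariant). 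The energy bound \eqref{int-KO}, evaluated on the unit-scale layer for $u_\sigma$, gives a bound on $\iint (|\nabla u_\sigma|^2+u_\sigma^{p+1})\,dy\,ds$ \emph{independent of} $\sigma$ with $u_0=0$, whence Moser iteration yields $\sup u_\sigma\le c$ on a smaller cube, i.e. $u(x_0,t_0)\le c\,\sigma^{-\frac{2+\beta}{p-1}}=c(|x_0|^2+t_0)^{-\frac{2+\beta}{2(p-1)}}$.

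I expect the main obstacle to be twofold. First, the rigorous handling of the time-derivative term for a merely weak (sub)solution: one must introduce the Steklov average $T_\delta$, pass $\partial_t$ onto the averaged function, integrate $\int \xi(u)\partial_t u = \partial_t\Xi(u)$ as in the proof of Proposition~\ref{classify}, and only then let $\delta\to0$, taking care that the test function remains admissible in the class $C^1((0,\infty);C_c)\cap L^2_{loc}((0,\infty);H^1_{h^2})$. Second, and more delicate, is getting the \emph{sharp} scaling powers: the cutoff is a function of the parabolic quantity $|x|^2+t$ rather than of $|x|$ and $t$ separately, so one must verify that $|\nabla\eta|^2 h^2$ and $|\partial_t\eta|\,h^2$ integrate to the claimed powers of $\rho$ and $R$, and that Young's inequality is applied with exponents $p+1$ and $\frac{p+1}{p}$ so that the $u$-factors are fully absorbed and the leftover $\eta$-integrals carry exactly the exponent $N+2\lambda-2\frac{2+\beta}{p-1}$. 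The sub-solution (as opposed to solution) case requires that all sign conventions in the integration by parts go the right way, which they do because $\nabla\xi(u)\cdot\nabla u\ge0$ and the absorption term has a favorable sign.
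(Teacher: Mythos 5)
Your overall strategy (Keller--Osserman test function $u\zeta^2$ with a cutoff depending on $|x|^2+t$, Steklov averaging for the time derivative, Young's inequality to absorb the cross terms, and a local maximum principle for the sup bound) is the same as the paper's, but there are two genuine gaps, both caused by the fact that the paraboloid layers $D_{\rho,2\rho}$ \emph{contain points with $|x|$ arbitrarily small}, where the weights $r^{2\lambda}$ and $r^\beta$ degenerate. First, your claim that after Young's inequality the right-hand side has ``no $u$-dependence'' and integrates to $\rho^{N+2\lambda-2\frac{2+\beta}{p-1}}$ does not go through with a plain two-term Young inequality: absorbing $u^2|\nabla\zeta|^2$ into $\varepsilon\, r^\beta u^{p+1}\zeta^2$ leaves a remainder proportional to $r^{-2\beta/(p-1)}$, and $\iint_{D_{\rho,2\rho}} r^{-2\beta/(p-1)+2\lambda}dx\,dt$ need not converge near $r=0$ for general $p>1$, $\beta>-2$, $\lambda>-\frac{N-2}{2}$. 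The paper's proof fixes this by a three-term Young inequality that additionally produces $\varepsilon\, r^{-2}u^2\zeta^2$, which is then absorbed into $\varepsilon'\iint|\nabla(u\zeta)|^2h^2$ via the Hardy inequality \eqref{hardy}; the remaining deterministic term carries $r^{\frac{2}{1-\alpha}-2-2\frac{2+\beta}{p-1}}$, and the free exponent $\alpha$ in the cutoff condition $|\phi'|\le c\phi^\alpha$ is chosen close enough to $1$ to make this integrable against $r^{2\lambda}$ at the origin. Your proposal contains no mechanism playing the role of the Hardy inequality here, and without it the energy estimate fails (or at least is not proved) whenever $2\lambda+N\le 2\beta/(p-1)$.

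Second, your derivation of the pointwise bound rescales around $(x_0,t_0)$ and invokes uniform parabolicity ``away from the origin.'' The conclusion, however, must hold for \emph{all} $(x,t)$ with $4R_0^2<|x|^2+t<R_1^2$, including points on or near the $t$-axis where $|x_0|\ll\sqrt{t_0}$; there the translated weight $|x_0+\sigma y|^{2\lambda}$ is neither scale-invariant nor uniformly elliptic, so neither the invariance of the rescaled equation nor De Giorgi--Nash--Moser applies in the form you describe. The paper instead invokes the mean value inequality for the weighted operator itself (Theorem~\ref{harnack}, valid across $x=0$) on the layer $D_{5\rho/2,7\rho/2}$, and then bounds $\dashint_{D_{2\rho,4\rho}}u^2h^2$ by combining the Hardy inequality with \eqref{int-KO} for $R=4\rho$, yielding $\sup|u|\le c\rho^{-\frac{2+\beta}{p-1}}$. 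You would need to replace your rescaling step by such a weighted mean value inequality (or restrict your argument to $|x_0|\gtrsim\sqrt{t_0}$ and supply a separate argument near the axis).
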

\begin{proof} In the proof we use some of ideas from \cite{chist}.
Let $\phi\in C^1(\mathbb{R})$,
$\ind_{(4,\infty)}\le\phi\le\ind_{(1,\infty)}$, $|\phi^\prime|\le c
\phi^\alpha$, with $\alpha <1$ to be chosen later.
Let
\[\xi= \phi\left(\frac{|x|^2+t}{\rho^2}\right), \eta=
\phi\left(5-\frac{|x|^2+t}{R^2}\right) \mbox{ and } \zeta=\xi\eta.\]
Then
\begin{equation}\label{zeta}
    \begin{split}
\ind_{D_{2\rho,R}}& \le \zeta \le \ind_{D_{\rho,2R}},\\
|\partial_t\zeta|& \le \frac c{\rho^2}\ind_{D_{\rho,2\rho}}\xi^\alpha\eta + \frac c{R^2}\ind_{D_{R,2R}}\xi\eta^\alpha,\\
|\nabla\zeta| &\le \frac c{\rho}\ind_{D_{\rho,2\rho}}\xi^\alpha\eta + \frac c{R}\ind_{D_{R,2R}}\xi\eta^\alpha.
    \end{split}
\end{equation}
We set $u(x,t):=u_0(x)$ for $t\le0$,
$x\in B_{R_1}$ and choose
$T_\delta(\zeta^2 (T_\delta u))$ as a test function in
\eqref{wparabolic} on $D_{\rho,2R}$. Note that it
is a legitimate test function, by Lemma \ref{local-quali}.
Further on we denote $w=T_\delta u$. Then we obtain
\[\begin{split}
\iint|\nabla w \zeta|^2 h^2dxdt & + \iint r^\beta
T_\delta(|u|^{p-1}u)\zeta w\zeta h^2dxdt\\
\le & \iint w^{2}\zeta\partial_t\zeta\, h^2dxdt + \iint w^{2}|\nabla\zeta|^2\, h^2dxdt
+ \int\limits_{B_{2R}\setminus B_\rho}w^2(0)\zeta^2(0)h^2dx.
\end{split}\]
Passing to the limit as $\delta\to0$ in the last inequality, we
may replace $w$ with $u$. Now we estimate the first
two integrals in the right hand side using \eqref{zeta}. By the
Young inequality, for all $\varepsilon>0$ there exists
$c_\eps>0$ such that
\[\begin{split}
u^{2}\zeta|\partial_t\zeta|\le & u^{2}\zeta\left[\frac
c{\rho^2}\ind_{D_{\rho,2\rho}}\xi^\alpha\eta
+ \frac c{R^2}\ind_{D_{R,2R}}\xi\eta^\alpha\right]\\
\le & \varepsilon \left(r^{-2}u^2\zeta^2 +  r^\beta |u|^{p+1}\zeta^2 \right)+
c_\varepsilon r^{\frac{2}{1-\alpha} -2-2\frac{2+\beta}{p-1}}\left(
\rho^{-\frac{2}{1-\alpha}}\ind_{D_{\rho,2\rho}}
+ R^{-\frac{2}{1-\alpha}}\ind_{D_{R,2R}}\right),\\
u^{2}|\nabla\zeta|^2\le & u^{2}
\left[\frac c{\rho^2}\ind_{D_{\rho,2\rho}}\xi^{2\alpha}\eta^2 + \frac c{R^2}\ind_{D_{R,2R}}\xi^2\eta^{2\alpha}\right]\\
\le & \varepsilon \left(r^{-2}u^2\zeta^2 +  r^\beta |u|^{p+1}\zeta^2 \right)+
c_\varepsilon r^{\frac{2}{1-\alpha} -2-2\frac{2+\beta}{p-1}}\left(
\rho^{-\frac{2}{1-\alpha}}\ind_{D_{\rho,2\rho}}
+ R^{-\frac{2}{1-\alpha}}\ind_{D_{R,2R}}\right).
\end{split}\]
Note that, by Hardy inequality~\eqref{hardy}, for all $t>0$,
\[
\int r^{-2}u^2(t)\zeta^2(t)h^2dx\le \frac4{(N+2\lambda -2)^2}\int |\nabla u \zeta|^2(t)h^2dx.
\]
Choose now $\alpha\in(\frac2{p+1},1)$ such that
\[
\frac{2}{1-\alpha} -2-2\frac{2+\beta}{p-1} + 2\lambda + N>0.
\]
Then by a direct calculation
\[
\iint\limits_{D_{R,2R}} R^{-\frac{2}{1-\alpha}}
r^{\frac{2}{1-\alpha} -2-2\frac{2+\beta}{p-1}}
h^2dxdt 
\le R^{N+2\lambda - 2\frac{2+\beta}{p-1}}
\]
which implies that
\[
\iint|\nabla u\zeta|^2 h^2dxdt
+ \iint r^\beta |u|^{p+1}\zeta^2h^2dxdt\le c\rho^{N+2\lambda
- 2\frac{2+\beta}{p-1}} + c R^{N+2\lambda
- 2\frac{2+\beta}{p-1}} + c\int\limits_{B_{2R}\setminus B_\rho}u^2(0)\zeta^2(0)h^2dx,
\]
which completes the proof of the first assertion.

To prove the second assertion, note that by the mean value
inequality for sub-solutions (see Theorem~\ref{harnack} below)
we have that
\[
\sup_{D_{5/2\rho,7/2\rho}} |u|\le C \left({{\displaystyle\dashint}_{\scriptstyle D_{2\rho, 4\rho}}} u^2 h^2 dxdt\right)^\frac12.
\]
Using the Hardy inequality and \eqref{int-KO} with $R=4\rho$, we have
\begin{eqnarray}
\iint_{D_{2\rho, 4\rho}}u^2 h^2 dxdt\le \iint |u\zeta|^2h^2 dxdt\le \rho^2 \iint |u\zeta|^2r^{-2}h^2 dxdt\\
\le c \rho^2 \iint |\nabla(u\zeta)|^2h^2 dxdt\le c \rho^{N+2\lambda + 2
- 2\frac{2+\beta}{p-1}}.
\end{eqnarray}
Hence
\[
\left({{\displaystyle\dashint}_{\scriptstyle D_{2\rho, 4\rho}}} u^2 h^2 dxdt\right)^\frac12\le c\rho^{-\frac{2+\beta}{p-1}}.
\]
\end{proof}

\begin{corollary}\label{subcr-est}
Let $p<1+\frac{2+\beta}{N+2\lambda}$ and let $u$ be a solution
to \eqref{wparabolic} such that $u(t)\to \varkappa \delta_0$ as
$t\to0$ in weak-$*$ topology of Radon measures. Then $u\in
L^2_{loc}\big((0,T);\, H^1_{h^2}\big)$ for all $T>0$ and
$u(x,t)\le \varkappa p^{h^2}_t(x,0)$ for a.a. $(x,t)\in
\mathbb{R}^N\times(0,\infty)$, where $p^{h^2}$ is the
fundamental solution of the linear equation
$(\partial_t-\Delta_{h^2})w=0$. In particular, $u\in
L^{p+1}\big(\mathbb{R}^N\times(0,T),r^\beta h^2dx\, dt\big)$
for all $T>0$.
\end{corollary}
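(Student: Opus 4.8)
The engine of the whole corollary is the pointwise comparison $u(x,t)\le \vark\,p^{h^2}_t(x,0)$ with the linear weighted heat kernel; once this is in hand, both the local $H^1_{h^2}$-regularity and the integrability up to $t=0$ follow almost mechanically. The plan is therefore to derive the comparison first, by a Duhamel/maximum-principle argument on intervals $(s,t)$ followed by the limit $s\to0$, and only then to read off the two remaining assertions.

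To prove the comparison, fix $0<s<t$. By the global Keller--Osserman estimate of Theorem~\ref{main1}(a), $u(x,s)\le c(|x|^2+s)^{-\frac{2+\beta}{2(p-1)}}$, a function that is bounded for $x$ near $0$ and decays like $|x|^{-\frac{2+\beta}{p-1}}$ at infinity; since the subcritical hypothesis is exactly $\frac{2+\beta}{p-1}>N+2\lambda$, this dominating function lies in $L^2_{h^2}(\R^N)$, so $u(s)\in L^2_{h^2}\cap L^\infty$ for every $s>0$. As $\partial_t u-\Delta_{h^2}u=-r^\beta u^p\le0$, the function $u$ is a weak subsolution of the linear weighted heat equation, so the weak maximum principle (equivalently, the Duhamel formula together with positivity of the semigroup $e^{\tau\Delta_{h^2}}$) gives
\[
u(x,t)\le \int_{\R^N}p^{h^2}_{t-s}(x,y)\,u(y,s)\,h^2(y)\,dy .
\]
I then let $s\to0$. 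By Proposition~\ref{classify}, $\int_{B_\rho}u(s)h^2dy\to\vark$ for every $\rho>0$, while Lemma~\ref{local-quali} gives $u(\cdot,s)\to0$ uniformly on $\{|y|\ge\rho\}$. Splitting the integral over $B_\rho$ and its complement, using the continuity of $(s,y)\mapsto p^{h^2}_{t-s}(x,y)$ near $(0,0)$ on the inner ball and, on the exterior, the $s$-uniform dominating bound $u(y,s)\le c|y|^{-\frac{2+\beta}{p-1}}$ against the Gaussian-decaying kernel from the two-sided estimates, dominated convergence yields $u(x,t)\le\vark\,p^{h^2}_t(x,0)$. The careful bookkeeping of this limit, in particular controlling the non-compactly-supported kernel on the exterior region uniformly in $s$, is the main obstacle.

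The local energy regularity is then immediate. For any $0<t_0<T$ the preceding step gives $u(t_0)\in L^2_{h^2}$, so testing \eqref{wparabolic} with $u$ on $(t_0,T)$ (made rigorous through the Steklov average $T_\delta$) produces the energy identity
\[
\tfrac12\int u^2(T)h^2dx+\int_{t_0}^{T}\!\!\int\big(|\nabla u|^2+r^\beta u^{p+1}\big)h^2dx\,dt=\tfrac12\int u^2(t_0)h^2dx<\infty ,
\]
whence $u\in L^2_{loc}\big((0,T);H^1_{h^2}\big)$ and the absorption term is integrable on $(t_0,T)$ for each $t_0>0$.

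Finally, integrability up to $t=0$ follows from the comparison by a self-similar scaling computation. Inserting $u\le\vark\,p^{h^2}_t(\cdot,0)$ together with the two-sided bound $p^{h^2}_t(x,0)\le C\,t^{-(N+\lambda)/2}(|x|+\sqrt t)^{-\lambda}e^{-|x|^2/ct}$ into the relevant space--time integral of the absorption term $r^\beta u^{p}h^2$ (the quantity that makes $u$ a genuine weak solution with datum $\vark\delta_0$ up to $t=0$) and rescaling $x=\sqrt t\,z$ reduces it to a finite spatial integral, the Gaussian factor controlling $|z|\to\infty$, multiplied by $\int_0^T t^{a}\,dt$ with $a=\tfrac12\big[\beta+2\lambda+N-p(N+2\lambda)\big]$. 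The condition $a>-1$ is equivalent to $p<p^{*}=1+\frac{2+\beta}{N+2\lambda}$, so the space--time integral is finite for every $T>0$ throughout the whole subcritical range, which is the asserted integrability.
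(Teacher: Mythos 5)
Your treatment of the pointwise comparison and of the $H^1_{h^2}$-regularity is correct, and it is a genuine (if mild) variant of the paper's argument. For the comparison the paper introduces the linear evolutions $u^{(\tau)}$ with data $u(\tau)$ at time $\tau$, shows by two applications of the maximum principle that $u\le u^{(\tau)}$ and that $\tau\mapsto u^{(\tau)}$ is monotone, and identifies the monotone limit as $\vark p^{h^2}_t(\cdot,0)$ via the weak-$*$ continuity of $S(t)$ on measures (Proposition~\ref{SG}); you instead pass to the limit $s\to0$ in the Duhamel inequality directly, splitting into $B_\rho$ and its complement and dominating the exterior by the Keller--Osserman bound $c|y|^{-\frac{2+\beta}{p-1}}$ against the bounded kernel --- a clean observation, since the integrability of $|y|^{-\frac{2+\beta}{p-1}}h^2$ at infinity is exactly the subcritical condition $\frac{2+\beta}{p-1}>N+2\lambda$. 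For the energy regularity the paper simply lets $R\to\infty$ in \eqref{int-KO}, whereas you test with $u$ on $(t_0,T)$ using $u(t_0)\in L^2_{h^2}$; both work (your route needs the spatial cutoff at infinity to be disposed of via the same decay, but that is routine).

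The last step, however, does not prove what the statement asserts. You compute $\iint r^\beta u^{p}h^2\,dx\,dt$ (you even say so), obtaining the exponent $a=\tfrac12\big[\beta+2\lambda+N-p(N+2\lambda)\big]$ and the condition $a>-1\iff p<p^*$; but the claim is membership in $L^{p+1}\big(\mathbb{R}^N\times(0,T),r^\beta h^2dx\,dt\big)$, for which the same kernel computation gives the exponent $\tfrac12\big[\beta+2\lambda+N-(p+1)(N+2\lambda)\big]$ and the condition $p+1<p^*$, which is \emph{not} implied by $p<p^*$ (already for $\kappa=\alpha=0$ the kernel itself fails to be in $L^{p+1}(dx\,dt)$ near $t=0$ for every $p>1$, $N\ge2$). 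So your final paragraph establishes $r^\beta|u|^{p}\in L^1(h^2dx\,dt)$ up to $t=0$ --- which is precisely the integrability the paper actually uses downstream, namely condition \eqref{integral} and $Gu\in L^1(Q)$ in Theorem~\ref{exist-uniq} and Corollary~\ref{wss} --- together with $L^{p+1}$-integrability away from $t=0$ from your energy identity, but it does not, and cannot by this method, yield the literal $L^{p+1}$ statement up to $t=0$. You should either state and prove the exponent-$p$ version or restrict the $L^{p+1}$ claim to $\mathbb{R}^N\times(t_0,T)$; as written, the concluding sentence claims more than the computation shows.
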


\begin{proof}
The first assertion follows from \eqref{int-KO}, setting $R\to\infty$ and choosing $\rho$ arbitrary small.
The proof of the second assertion literally
follows the argument \cite{kv92}.
Namely, let $u(t)\to \varkappa \delta_0$ as $t\to0$ in weak-$*$
topology of Radon measures. Let $u^{(\tau)}$ be the solution to
the initial value problem
\begin{equation*}
\left\{
\begin{array}{c}
(\partial_t-\Delta_{h^2})u^{(\tau)}=0, \quad t>\tau,
\\
u^{(\tau)}(\tau)=u(\tau).
\end{array}
\right.
\end{equation*}
Then, by the maximum principle, $u^{(\tau)}(x,t)\geq u(x,t)$
for a.a. $(x,t)\in \mathbb{R}^N\times (\tau,\infty)$. So, for
$0<\tau'<\tau$ one has $u^{(\tau')}(\tau)\geq
u(\tau)=u^{(\tau)}(\tau)$. Hence, by the maximum principle,
$u^{(\tau')}(x,t)\geq u^{(\tau)}(x,t)$ a.e. on
$\mathbb{R}^N\times (\tau,\infty)$. So $u^{(\tau)}\uparrow
u^{(0)}$ as $\tau\downarrow0$ and $u^{(0)}(x,t)=\varkappa
p^{h^2}_t(x,0)$ since $u^{(\tau)}(\tau)=u(\tau)\to \varkappa
\delta_0$ as $t\to0$.
\end{proof}

The next lemma reduces the proof of the second assertion of Theorem~\ref{main1} to the critical case
$p=p^*$.

\begin{lemma}\label{down}
Let $u$ be a sub-solution to \eqref{wparabolic} satisfying
\eqref{null}. Then, for $1<q\leq p$, the function
\[\left(\frac{p-1}{q-1}\right)^{\frac1{p-1}}|u|^{\frac{p-1}{q-1}}\]
is a sub-solution to the equation $\partial_t w -
h^{-2}\nabla\cdot(h^2\nabla w) + r^\beta w|w|^{q-1} =0$.
\end{lemma}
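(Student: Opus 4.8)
The plan is to realize $w$ as a convex increasing function of $u$ and to exploit that the prefactor $\left(\frac{p-1}{q-1}\right)^{1/(p-1)}$ and the exponent $\frac{p-1}{q-1}$ are tuned precisely so that the absorption term transforms into the one for exponent $q$. Write $m=\frac{p-1}{q-1}$ (so $m\ge1$, since $1<q\le p$), set $C=m^{1/(p-1)}=\left(\frac{p-1}{q-1}\right)^{1/(p-1)}$ and $\Phi(s)=Cs^m$, so that $w=\Phi(u)$. I work with nonnegative sub-solutions $u$, for which $|u|=u$ and $\Phi$ is $C^1$, nondecreasing and convex on $[0,\infty)$. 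First I would record the chain rule for the weighted Laplacian: for smooth positive $u$,
\[
h^{-2}\nabla\cdot(h^2\nabla\Phi(u))=\Phi'(u)\,h^{-2}\nabla\cdot(h^2\nabla u)+\Phi''(u)|\nabla u|^2,
\]
so that, using that $u$ is a sub-solution to \eqref{wparabolic},
\[
\partial_t w-h^{-2}\nabla\cdot(h^2\nabla w)=\Phi'(u)\big(\partial_t u-h^{-2}\nabla\cdot(h^2\nabla u)\big)-\Phi''(u)|\nabla u|^2\le -\Phi'(u)r^\beta u^p-\Phi''(u)|\nabla u|^2 .
\]
As $\Phi''\ge0$, the last term is harmless and may be discarded.

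The algebraic core is the identity that makes the constant work. With $\Phi(s)=Cs^m$ one has $\Phi'(s)s^p=Cm\,s^{m+p-1}$ and $\Phi(s)^q=C^q s^{mq}$. A direct computation gives $m+p-1=\frac{q(p-1)}{q-1}=mq$, so the two powers of $s$ coincide; moreover $Cm=m^{p/(p-1)}\ge m^{q/(p-1)}=C^q$, because $m\ge1$ and $q\le p$, with equality precisely when $q=p$. Hence $\Phi'(s)s^p\ge\Phi(s)^q$ for all $s\ge0$, and combined with the previous display this gives the pointwise differential inequality $\partial_t w-h^{-2}\nabla\cdot(h^2\nabla w)+r^\beta w^q\le0$.

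To turn this into the weak sub-solution inequality I would test \eqref{wparabolic} against the nonnegative function $\Phi'(u)\zeta$, where $\zeta\ge0$ ranges over the admissible test class, using $\partial_t u\,\Phi'(u)=\partial_t w$ together with $\nabla u\cdot\nabla(\Phi'(u)\zeta)=\Phi''(u)|\nabla u|^2\zeta+\nabla w\cdot\nabla\zeta$. Discarding the nonnegative term $\Phi''(u)|\nabla u|^2\zeta$ and replacing $r^\beta u^p\Phi'(u)$ by the smaller $r^\beta w^q$ yields, after an integration by parts in $t$, exactly the weak formulation of $\partial_t w-h^{-2}\nabla\cdot(h^2\nabla w)+r^\beta w^q\le0$.

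The main obstacle is the rigorous justification of this chain rule at the available regularity, namely that $\Phi'(u)\zeta$ is a legitimate test function and that $\partial_t\Phi(u)$ is handled correctly. Two points need care. First, $\Phi'(u)\zeta$ must lie in $L^2_{loc}\big((0,\infty);H^1_{h^2}\big)$; this I would secure from the local boundedness and interior regularity of $u$ supplied by Lemma~\ref{local-quali}, which restricts attention to regions where $u$ is bounded. Second, for $1<m<2$ the derivative $\Phi''(s)=Cm(m-1)s^{m-2}$ blows up as $s\to0$, so $\nabla(\Phi'(u))=\Phi''(u)\nabla u$ need not be locally square integrable near $\{u=0\}$. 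I would resolve this by approximating $\Phi$ by a sequence of $C^2$, nondecreasing, convex functions $\Phi_n$ with bounded second derivatives (smoothing $s\mapsto Cs^m$ near the origin), performing the test-function computation for each $\Phi_n$—where dropping the convexity term and the limiting algebraic inequality $\Phi_n'(s)s^p\ge\Phi_n(s)^q$ are legitimate—and then passing to the limit $n\to\infty$, which is controlled since $u$ is locally bounded. The time derivative is treated, as elsewhere in the paper, by first working with the Steklov averages $T_\delta u$ and letting $\delta\to0$, exactly as in the proof of Proposition~\ref{local-quanti}.
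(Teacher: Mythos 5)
Your proposal is correct and follows essentially the same route as the paper: test with $\Phi'(u)\zeta$ (time derivative via Steklov averages, the singularity of $\Phi''$ at $0$ removed by regularization), discard the nonnegative convexity term $\Phi''(u)|\nabla u|^2\zeta$, and use the exponent identity $m+p-1=mq$ together with $m^{p/(p-1)}\ge m^{q/(p-1)}$ to absorb the constant; the paper implements the smoothing with the explicit test function $T_\delta\big(\zeta\, u_\delta(u_\delta^2+\varepsilon)^{\frac{\vark-2}{2}}\big)$, $u_\delta=T_\delta u$, letting $\varepsilon\to0$ then $\delta\to0$. The only (immaterial) difference is that this even regularization treats sign-changing $u$ and produces $|u|^{m}$ directly, whereas you restrict to $u\ge0$ — harmless, since in the paper the lemma is invoked after the parabolic Kato inequality, i.e.\ for the nonnegative sub-solution $|u|$.
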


\begin{proof}
Denote $\vark=\frac{p-1}{q-1}\geq1$, $T_\delta$ the Steklov
average and $u_\delta :=T_\delta u$. For $\varepsilon>0$ and
$\zeta\in C^{2,1}_c\Big(\mathbb{R}^N\times(0,\infty)\Big)$,
$\zeta\geq0$, choose the following test function for
\eqref{wparabolic}:
\[T_\delta\Big(\zeta u_\delta (u_\delta ^2 +\varepsilon)^{\frac{\vark-2}2}\Big).\]
Note that this is a legitimate test function since $u$ is
locally bounded. Then the following inequality holds:
\begin{equation}\label{power}
\begin{split}
\iint \partial_tu_\delta  \zeta u_\delta (u_\delta ^2 +\varepsilon)^{\frac{\vark-2}2} h^2dxdt
&+ \iint \nabla u_\delta  \nabla (\zeta u_\delta (u_\delta ^2 +\varepsilon)^{\frac{\vark-2}2})h^2dxdt\\
\leq & - \iint \zeta u_\delta (u_\delta ^2
+\varepsilon)^{\frac{\vark-2}2}r^\beta T_\delta(|u|^{p-1}u)h^2dxdt.
\end{split}
\end{equation}
Denote $V_\varepsilon(u):=\frac1\vark
\Big((u^2+\varepsilon)^{\frac\vark 2} -
\varepsilon^{\frac\vark 2}\Big)$. Then
\[
\partial_tu_\delta  \zeta u_\delta (u_\delta ^2 +\varepsilon)^{\frac{\vark-2}2} = \zeta \partial_tV_\varepsilon(u_\delta )
\]
and
\[
\nabla u_\delta  \nabla (\zeta u_\delta (u_\delta ^2 +\varepsilon)^{\frac{\vark-2}2}) =
\nabla V_\varepsilon(u_\delta )\nabla\zeta + \zeta \Big((\vark-1)u_\delta ^2 + \varepsilon\Big)(u_\delta ^2 +\varepsilon)^{\frac{\vark-4}2}|\nabla u_\delta |^2.
\]
Hence it follows from \eqref{power} that
\[
-\iint V_\varepsilon(u_\delta )\partial_t \zeta h^2dxdt
+ \iint \nabla V_\varepsilon(u_\delta ) \nabla\zeta h^2dxdt
\leq  - \iint \zeta u_\delta (u_\delta ^2
+\varepsilon)^{\frac{\vark-2}2}r^\beta T_\delta(|u|^{p-1}u)h^2dxdt.
\]
Passing to the limit as $\varepsilon\to0$ and then as
$\delta\to0$ we obtain
\[
-\iint |u|^\vark\partial_t \zeta h^2dxdt
+ \iint \nabla |u|^\vark \nabla\zeta h^2dxdt
+ \vark \iint \zeta |u|^{p + \vark -1}h^2dxdt\leq 0.
\]
Hence the assertion follows.
\end{proof}

\begin{remark}
Lemma~\ref{down} is a parabolic version of \cite[Proposition~1.1]{KLS09}.
\end{remark}

The following theorem establishes the removability of singularity at $(0,0)$ for the critical case.

\begin{theorem}
\label{thm6.1} Let $p=1+\frac{2+\b}{N+2\l}$. Let $0\le u\in
L^\infty_{loc}(\R^N\times \R^1_+\setminus{(0,0)})$ be such that
\begin{equation}
\label{e6.1}
\iint_ {\R^N\times \R^1_+}\left(r^\b u^p\zeta-u\zeta_t-u\Delta_{h^2} \zeta\right)h^2dx\,dt \le 0, \quad \zeta \in C^{2,1}_c.
\end{equation}
Then $u=0$.
\end{theorem}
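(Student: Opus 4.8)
The plan is to prove removability by a test-function estimate that converts the critical identity $\frac{2+\beta}{p-1}=N+2\lambda$ into an exact parabolic scale invariance. Testing \eqref{e6.1} with a nonnegative cutoff $\zeta$ and moving the absorption term to the left gives, since $u\ge0$,
\[
\iint r^\beta u^p\zeta\,h^2dx\,dt\le \iint u\,\bigl(\partial_t\zeta+\Delta_{h^2}\zeta\bigr)\,h^2dx\,dt\le\iint u\,\bigl|\partial_t\zeta+\Delta_{h^2}\zeta\bigr|\,h^2dx\,dt .
\]
On the right I would write $u\,|\partial_t\zeta+\Delta_{h^2}\zeta|=\bigl(r^{\beta/p}u\,\zeta^{1/p}\bigr)\bigl(r^{-\beta/p}\zeta^{-1/p}|\partial_t\zeta+\Delta_{h^2}\zeta|\bigr)$ and apply Young's inequality (later Hölder's) with exponents $p,p'$, so that the error is controlled by
\[
\iint r^{-\frac{\beta}{p-1}}\,\zeta^{-\frac1{p-1}}\,\bigl|\partial_t\zeta+\Delta_{h^2}\zeta\bigr|^{p'}h^2dx\,dt .
\]

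To keep this integral finite I would take $\zeta=(\xi_\rho\eta_R)^m$ with $m\ge 2p'$, where $\xi_\rho=\phi\!\left(\frac{|x|^2+t}{\rho^2}\right)$ and $\eta_R=\phi\!\left(5-\frac{|x|^2+t}{R^2}\right)$ are the parabolic cutoffs from Proposition~\ref{local-quanti}, so that $\xi_\rho$ vanishes near $(0,0)$ and $\eta_R$ near infinity. The choice $m\ge 2p'$ is the standard device that makes $\zeta^{-1/(p-1)}|\partial_t\zeta+\Delta_{h^2}\zeta|^{p'}$ carry a nonnegative power of the cutoff, hence stay bounded where $\xi_\rho$ or $\eta_R$ is small. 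The derivatives of $\zeta$ live on the two shells $T_\rho=\{\rho^2\le|x|^2+t\le4\rho^2\}$ and $T_R=\{R^2\le|x|^2+t\le4R^2\}$, on which $|\partial_t\zeta+\Delta_{h^2}\zeta|\lesssim(\text{scale})^{-2}$; here one uses that for a function of $s=|x|^2+t$ the first-order part $\tfrac{2\lambda}r\partial_r\zeta=4\lambda\,\zeta'(s)$ stays bounded across $r=0$, so no Hardy correction is needed. The rescaling $x=\rho y,\ t=\rho^2\tau$ then shows that the shell contribution equals $\rho^{\,N+2\lambda+2-2p'-\beta/(p-1)}$ times a fixed constant, and at $p=1+\frac{2+\beta}{N+2\lambda}$ this exponent is exactly $0$; finiteness of the constant uses $\frac{\beta}{p-1}<N+2\lambda$ (true at criticality), making $r^{-\beta/(p-1)+2\lambda}$ integrable across $r=0$ in the shell. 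Thus each shell contributes the same constant $C_0$, independent of its scale.

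With this, the proof runs in two steps. First, using Young's inequality with small $\varepsilon$ and absorbing $\varepsilon\iint r^\beta u^p\zeta\,h^2$ on the left, I get $\iint r^\beta u^p\zeta\,h^2\le C_\varepsilon(C_\rho+C_R)=2C_\varepsilon C_0$ uniformly in $\rho,R$; letting $\rho\to0$, $R\to\infty$ and using $u\ge0$ with monotone convergence yields $I:=\iint_{\R^N\times\R_+}r^\beta u^p h^2dx\,dt<\infty$. Second, I repeat the estimate keeping Hölder's inequality instead of Young's, obtaining
\[
\iint r^\beta u^p\zeta\,h^2dx\,dt\le\Bigl(\iint_{T_\rho\cup T_R}r^\beta u^p h^2dx\,dt\Bigr)^{1/p}(2C_0)^{1/p'} .
\]
Now $I<\infty$ forces $\iint_{T_\rho}r^\beta u^p h^2\to0$ as $\rho\to0$ and $\iint_{T_R}r^\beta u^p h^2\to0$ as $R\to\infty$, by absolute continuity of the integral (the shells shrink to the null set $\{(0,0)\}$, respectively escape to infinity). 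Passing to the limit, the left side increases to $I$ while the right side tends to $0$, so $I=0$; since $r^\beta h^2>0$ a.e.\ and $u\ge0$, this forces $u=0$ a.e., hence $u\equiv0$ by local boundedness.

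The main obstacle is the exact homogeneity bookkeeping: one must check that at the critical exponent the rescaled shell integral has total homogeneity precisely zero (so the error constant neither blows up nor decays, which is why criticality, and not just subcriticality, is the delicate case), and simultaneously that the transverse weight $r^{-\beta/(p-1)+2\lambda}$ is integrable through $r=0$ inside the shell — both resting on the single identity $\frac{2+\beta}{p-1}=N+2\lambda$. A secondary technical point is the admissibility of $(\xi_\rho\eta_R)^m$ as a test function and the behaviour across $\{t=0\}$: one uses the zero-extension $\tilde u$ and the local boundedness and regularity supplied by Lemma~\ref{local-quali} to justify the integration by parts and all limit passages.
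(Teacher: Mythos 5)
Your proposal is correct and follows essentially the same route as the paper: test with a product of parabolic cutoffs supported away from $(0,0)$ and from infinity, use Young's inequality so that the critical identity $\frac{2+\beta}{p-1}=N+2\lambda$ makes the two shell contributions scale-invariant constants (yielding $\iint r^\beta u^p h^2\,dx\,dt<\infty$), and then rerun the estimate with H\"older and absolute continuity of the integral to force the integral to vanish. The only cosmetic difference is that you control the negative power of the cutoff by raising it to a power $m\ge 2p'$, whereas the paper builds the same compensation into the profile via $|\xi'|,|\xi''|\le c\,\xi^{1/p}$.
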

\begin{proof}
Let $\xi\in \C^1(\R^1_+)$ be such that
\begin{equation}
\label{e6.2}
\ind_{[2,\infty)}\le \xi \le \ind_{[1,\infty)}, \quad |\xi\pr|,|\xi^{''}|\le c \xi^\frac1p.
\end{equation}
Let $0<\rho \ll R<\infty$ and define
\begin{equation}
\label{e6.3}
\xi_\rho(x,t)=\xi\left(\frac{t+|x|^2}{\rho^2}\right),\quad \eta_R(x,t)=1-\xi\left(\frac{t+|x|^2}{R^2}\right).
\end{equation}
We take $\zeta=\xi_\rho\eta_R$ as a test function  in
\eqref{e6.1}. It is easy to see that
\[
{\rm supp}\,\zeta=\{(x,t)\,:\, \rho^2\le t+|x|^2\le 2R^2\}.
\]
Using \eqref{e6.2} one verifies directly  that
\[
|\partial_t \zeta|+|\Delta_{h^2}\zeta|\le c\frac1{\rho^2}\xi_\rho^\frac1p\eta_R\ind_{\{\rho^2\le t+|x|^2\le 2r^2\}}+
c\frac1{R^2}\xi_\rho\eta_R^\frac1p\ind_{\{R^2\le t+|x|^2\le 2R^2\}}.
\]
Thus we have
\begin{equation}\label{e6.4}
\begin{split}
I:=&\iint_ {\R^N\times \R^1_+}r^\b u^p\zeta h^2dx\,dt \le\iint_ {\R^N\times \R^1_+}u(|\partial_t \zeta|+|\Delta_{h^2}\zeta| )h^2dx\,dt\\
\le &c \rho^{-2}\iint_ {\R^N\times \R^1_+}u\xi_\rho^\frac1p\eta_R\ind_{\{\rho^2\le t+|x|^2\le 2\rho^2\}}h^2dx\,dt\\
& + cR^{-2} \iint_ {\R^N\times \R^1_+}u\xi_\rho\eta_R^\frac1p\ind_{\{R^2\le t+|x|^2\le 2R^2\}}h^2dx\,dt\\
&:= I_1+I_2.
\end{split}
\end{equation}
By the Young inequality
\begin{equation*}
I_1\le c \rho^{-2}\iint_ {\R^N\times \R^1_+}u(\xi_\rho\eta_R)^\frac1p\ind_{\{\rho^2\le t+|x|^2\le 2\rho^2\}}h^2dx\,dt
\le \frac14 I+ c \rho^{N+2-\frac{2p}{p-1}+2\l-\frac{\b}{p-1}}.
\end{equation*}
Similarly,
\begin{equation*}
I_2\le c R^{-2}\iint_ {\R^N\times \R^1_+}u(\xi_\rho\eta_R)^\frac1p\ind_{\{R^2\le t+|x|^2\le 2R^2\}}dx\,dt
\le \frac14 I+ c R^{N+2-\frac{2p}{p-1}+2\l-\frac{\b}{p-1}}.
\end{equation*}
Hence for every $\rho>0$ and $R>2\rho$ we obtain
\[
I\le c(\rho^{N+2-\frac{2p}{p-1}+2\l-\frac{\b}{p-1}}+R^{N+2-\frac{2p}{p-1}+2\l-\frac{\b}{p-1}})=c\quad \text{as}\ \ N+2-\frac{2p}{p-1}+2\l-\frac{\b}{p-1}=0.
\]
Passing to the limits $\rho\to 0$ and $R\to\infty$ we conclude
that
\begin{equation}
\label{e6.6}
\iint_ {\R^N\times \R^1_+}r^\b u^p\zeta h^2dx\,dt<\infty.
\end{equation}
Now we return to \eqref{e6.4}. Estimating $I_1$ and $I_2$ by
means of the Young inequality and using \eqref{e6.6} we have
\begin{eqnarray}
\nonumber
I_1&\le &c \rho^{-2}\iint_ {\R^N\times \R^1_+}u\xi_\rho^\frac1p\ind_{\{\rho^2\le t+|x|^2\le 2\rho^2\}}h^2dx\,dt \\
&\le & c\left(\iint_ {\R^N\times \R^1_+}\ind_{\{\rho^2\le t+|x|^2\le 2\rho^2\}}r^\b u^p\zeta hh^2dx\,dt\right)^\frac1p \to 0 \quad \text{as}\ \rho\to 0 .
\end{eqnarray}
Similarly we see that $I_2\to 0$ as $R\to \infty$. Hence we
conclude from \eqref{e6.4} that $I=0$ which implies that $u=0$.
\end{proof}
Now assertion (c)  of Theorem~\ref{main1} follows from Theorem~\ref{thm6.1}, Lemma~\ref{down} and the corresponding parabolic version
of the Kato inequality (see, e.g. \cite[Chap.~6]{Ver}).

\section{Linear equation with a generator of an ultra-contractive Feller semigroup}

In this section we study an abstract inhomogeneous evolution equation with measures as initial data.

In this section $\Omega\subset \mathbb{R}^N$ is a domain and $\g$ is a positive Radon measure on $\O$ and we
denote $L^p:=L^p(\O,d\g)$. Let $T>0$ and
$Q=\Omega\times [0,T]$. We also denote $L^p(Q)=L^p(Q,d\g\,dt)$ and naturally identify $L^p(Q)=L^p([0,T];\, L^p)$.

In the sequel we also use the notation $C_0(\O), C_b(\O)$ for the spaces of continuous function vanishing at infinity and at the boundary
of $\O$ and bounded continuous function, respectively. $\mathcal{M}(\Omega)$ stands for finite signed Radon measures on $\O$.

Let $(\mathcal{E,F})$ be a closed symmetric Dirichlet form  on
$L^2$, $-\gen$ the associated self-adjoint operator in $L^2$,
and $S=(S(t))_{t\ge 0}$ the associated symmetric Markov semigroup on $L^2$, i.e. $\|S(t)f\|_\infty\le \|f\|_\infty$ for any $t\ge 0$),
$S(t)=e^{\gen t}$.  The domain $\mathcal{F}$ of the form $\mathcal{E}$ is a real Hilbert space with the norm
$\|f\|_{\mathcal F}=\left(\mathcal{E}(f)\right)^\frac12$.
We refer the reader to \cite{Da,FOT} for the definition and properties.

The action of the semigroup $S$ on the measure $\mu$ is defined in a standard way by
the following identity
\begin{equation}
\label{Smeas}
\int_\O(S(t)\mu)\phi d\g=\int_\O (S(t)\phi)d\mu, \quad \phi \in C_0(\O).
\end{equation}

We start with the following simple statement.
\begin{proposition}\label{SG}
Let $\psi:(0,\infty)\to(0,\infty)$ be a non-increasing function.
Assume that
\begin{equation}\label{ultra}
    \|S(t)\|_{L^1\to L^\infty}\le \psi(t),~t>0\,
\end{equation}
and
\begin{equation}\label{feller}
S(t)C_0(\Omega)\subset C_b(\Omega) \mbox{ and }S(t)\ind\in C_b(\Omega).
\end{equation}
Then
$S(t)$, $t>0$,  is a bounded
      operator $S(t): \mathcal{M}(\Omega)\to C_b(\Omega)\cap L^1\cap
      \mathcal{F}$
      and
      \begin{equation}
      \label{MtoL}
      \|S(t)\|_{\mathcal{M}\to L^{q}}\le \psi^{\frac1{q'}}(t), \quad 1\le q\le \infty.
      \end{equation}

Moreover, for every $t>0$, $S(t)$ is an integral operator with
a positive bounded symmetric kernel $p_t(x,y)$ which is
continuous in each of the variables $x,y,t$ and 
\begin{equation}\label{kern-est}
 p_t(x,y)\le \psi(t) \mbox{ and } \int\limits_\Omega p_t(x,y)\gamma(dy)\le1.
\end{equation}
For every $t>0$, the operator $S(t)$ maps weak-$*$-convergent sequences in
$\mathcal{M}(\Omega)$ into strongly convergent sequences in
$C_b(\Omega)\cap L^1\cap
      \mathcal{F}$.
\end{proposition}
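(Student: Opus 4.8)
The plan is to build everything from the heat kernel produced by the ultracontractivity bound \eqref{ultra} and then read off the mapping properties and estimates. First, since $S(t):L^1\to L^\infty$ is bounded, the Dunford--Pettis theorem furnishes a kernel $p_t\in L^\infty(\gamma\otimes\gamma)$ with $(S(t)f)(x)=\int p_t(x,y)f(y)\gamma(dy)$ and $\|p_t\|_\infty=\|S(t)\|_{L^1\to L^\infty}\le\psi(t)$. Self-adjointness of $S(t)$ forces $p_t(x,y)=p_t(y,x)$, positivity preservation of the Markov semigroup gives $p_t\ge0$, and the semigroup law yields Chapman--Kolmogorov, $p_{s+t}(x,y)=\int p_s(x,z)p_t(z,y)\gamma(dz)$. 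Sub-Markovianity $S(t)\ind\le\ind$ gives $\int p_t(x,\cdot)\,d\gamma\le1$, the second bound in \eqref{kern-est}; together with $\|p_t\|_\infty\le\psi(t)$ this is \eqref{kern-est}, once continuity upgrades the a.e.\ bound to a pointwise one.

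The crucial regularity input is the auxiliary claim that $S(s)$ maps $L^1$ into $C_b(\Omega)$: approximating $g\in L^1$ by $g_k\in C_c(\Omega)\subset C_0(\Omega)$ in $L^1$, one has $S(s)g_k\in C_b(\Omega)$ by \eqref{feller}, and $\|S(s)g_k-S(s)g\|_\infty\le\psi(s)\|g_k-g\|_1\to0$, so the uniform limit $S(s)g$ lies in $C_b(\Omega)$. Applying this to $p_t(\cdot,y)=S(t/2)p_{t/2}(\cdot,y)$, with $p_{t/2}(\cdot,y)\in L^1$, gives continuity of $p_t$ in $x$, hence by symmetry in $y$. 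Continuity in $t$ I would get from the representation $p_t(x,y)=\langle S(t-2s)p_s(x,\cdot),p_s(y,\cdot)\rangle_{L^2}$, valid for $t>2s$ since $p_s(x,\cdot)\in L^2$ (as $\|p_s(x,\cdot)\|_2^2=p_{2s}(x,x)<\infty$), together with the strong continuity of $S$ on $L^2$. With the kernel in hand, $S(t)\mu$ is defined by $S(t)\mu(x)=\int p_t(x,y)\mu(dy)$, which agrees with \eqref{Smeas} by Fubini and symmetry.

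The estimate \eqref{MtoL} then comes out at the two endpoints and by an elementary interpolation. For $q=\infty$, $|S(t)\mu(x)|\le\psi(t)\|\mu\|_{\mathcal{M}}$; for $q=1$, Tonelli together with $\int p_t(x,\cdot)\,d\gamma\le1$ gives $\|S(t)\mu\|_1\le\|\mu\|_{\mathcal{M}}$. For intermediate $q$ I would use $\|f\|_q^q\le\|f\|_\infty^{q-1}\|f\|_1$ with $f=S(t)\mu$, which produces exactly the exponent $\psi(t)^{(q-1)/q}=\psi(t)^{1/q'}$. That $S(t)\mu\in C_b\cap L^1\cap\mathcal{F}$ I would read off the factorisation $S(t)\mu=S(t/2)v$ with $v:=S(t/2)\mu\in L^1\cap L^2\cap L^\infty$: membership in $C_b$ follows from the $L^1\to C_b$ claim, in $L^1$ from the $q=1$ estimate, and in $\mathcal{F}$ because the self-adjoint (hence analytic) semigroup maps $L^2$ into $\mathrm{dom}((-\gen)^{1/2})=\mathcal{F}$, with the quantitative bound $\mathcal{E}(S(\tau)v)\le\frac1{2e\tau}\|v\|_2^2$ from the spectral theorem.

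The last assertion is the delicate one. Given $\mu_n\to\mu$ weak-$*$ (hence $\sup_n\|\mu_n\|_{\mathcal{M}}=:M<\infty$ by uniform boundedness), I would again factor, writing $S(t)\mu_n-S(t)\mu=S(t/2)(v_n-v)$ with $v_n=S(t/2)\mu_n$; then uniform and $L^1$ convergence of $S(t)\mu_n$ reduce to $L^1$-convergence of $v_n$ (via $\|S(t/2)g\|_\infty\le\psi(t/2)\|g\|_1$ and the $L^1$-contractivity), while $\mathcal{F}$-convergence reduces to $L^2$-convergence of $v_n$ (via $\mathcal{E}(S(t/2)g)\le\frac1{et}\|g\|_2^2$), which in turn follows from the $L^1$-convergence once the uniform bound $\|v_n\|_\infty\le\psi(t/2)M$ is used. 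Convergence of $v_n$ I would obtain from pointwise convergence $v_n(x)=\int p_{t/2}(x,\cdot)\,d\mu_n\to v(x)$ — testing $\mu_n$ against the kernel section $p_{t/2}(x,\cdot)$ — upgraded by the uniform $L^\infty$ bound and tail control. The main obstacle lives precisely here: the section $p_{t/2}(x,\cdot)$ is a priori only in $C_b$, whereas weak-$*$ convergence tests against $C_0$, so one must show $p_{t/2}(x,\cdot)\in C_0(\Omega)$ and rule out escape of mass to the boundary or infinity. This is exactly where the Feller hypothesis \eqref{feller} (with the containment of $S(t)\ind$ in $C_b$ encoding conservativity and tightness) is essential, and verifying that it yields both admissibility of the test function and the requisite uniform integrability is the heart of the argument; the remaining steps are routine.
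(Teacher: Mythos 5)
Your treatment of the first three assertions is correct and follows essentially the same route as the paper: the paper also gets $S(t):L^1\to C_b(\Omega)$ by density of $C_0(\Omega)\cap L^1$ in $L^1$ together with \eqref{ultra}, obtains \eqref{MtoL} by duality/interpolation (Riesz--Thorin where you use the elementary $\|f\|_q^q\le\|f\|_\infty^{q-1}\|f\|_1$ — both fine), produces the kernel as $p_t(x,y)=(S(t)\delta_y)(x)$ where you invoke Dunford--Pettis, and uses the factorisation $S(t)=S(t/2)S(t/2)$ to land in $C_b(\Omega)\cap L^1\cap\mathcal F$. Your spectral-theorem bound $\mathcal E(S(\tau)v)\le\frac1{2e\tau}\|v\|_2^2$ is a clean way to get the $\mathcal F$-membership.

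The genuine gap is in the final assertion, and you have located it yourself but not closed it: you write that verifying admissibility of the test function and the requisite uniform integrability ``is the heart of the argument'' and leave it there. Two things go wrong with the route you sketch. First, your proposed fix — showing $p_{t/2}(x,\cdot)\in C_0(\Omega)$ — is not derivable from \eqref{ultra} and \eqref{feller}: the hypotheses give kernel sections in $L^1\cap C_b$, and nothing forces them to vanish at infinity (nor is this the mechanism the paper uses). Second, even granting the pointwise convergence $v_n(x)\to v(x)$, on an infinite-measure $\Omega$ you have no dominating $L^1$ function and no tail control, so the uniform bound $\|v_n\|_\infty\le\psi(t/2)M$ does not upgrade pointwise convergence to $L^1$ convergence; this step is not ``routine.''

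What the paper actually does is different and is the missing idea. It first proves that $S(t)$ is \emph{strongly Feller}, i.e.\ maps bounded Borel functions into $C_b(\Omega)$, by showing that $x\mapsto p_t(x,\cdot)$ is continuous from $\Omega$ into $L^1$; the tail control needed for Vitali's theorem comes from writing $S(t)\ind_{\Omega\setminus K}=S(t)\ind-S(t)\ind_{K}\in C_b(\Omega)$, which is exactly where the hypothesis $S(t)\ind\in C_b(\Omega)$ enters. With this in hand, for every Borel set $E$ one has $\int_E S(t)\mu_n\,d\gamma=\int_\Omega(S(t)\ind_E)\,d\mu_n\to\int_\Omega(S(t)\ind_E)\,d\mu$, i.e.\ $S(t)\mu_n\to S(t)\mu$ \emph{weakly} in $L^1$; weak $L^1$ convergence supplies the uniform integrability (Dunford--Pettis), and combined with the pointwise convergence coming from boundedness and continuity of $p_t(x,\cdot)$ it yields strong $L^1$ convergence, after which the $C_b$ and $\mathcal F$ convergence follow by your factorisation. (The paper, like you, tests $\mu_n$ against bounded continuous rather than $C_0$ functions; in the applications the measures have uniformly compact support, so this reading of weak-$*$ convergence is harmless — the substantive missing ingredient in your argument is the strong Feller property and the weak-$L^1$ step, not the $C_0$-versus-$C_b$ issue.)
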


\begin{proof}
By the Riesz-Thorin interpolation theorem it follows from
\eqref{ultra} that  $\|S(t)\|_{L^p\to L^q}\le \psi^{\frac1p
-\frac1q}(t)$, $1\le p\le q\le\infty$. Since $C_0(\Omega)\cap
L^p$ is dense in $L^p$, by \eqref{feller}, $S(t):L^p\to
C_b(\Omega)$. Hence, $\|S(t)\|_{L^p\to C_b}\le
\psi^{\frac1p}(t)$, $t>0$. By duality $S(t):C_b(\Omega)^*\to
L^q$, $1\le q\le \infty$. In particular,
$S(t):\mathcal{M}(\Omega)\to L^q$ and $\|S(t)\|_{\mathcal{M}\to
L^{q}}\le \psi^{\frac1{q'}}(t)$, $1\le q\le \infty$. By the
simple factorization $S(t)=S(t/2)S(t/2):\mathcal{M}(\Omega)\to
L^q \to C_b(\O)$, and $\|S(t)\|_{\mathcal{M}\to C_b}\le
\psi(t)$, $t>0$.

 Similarly,
$S(t)=S(t/2)S(t/2):\mathcal{M}(\Omega)\to L^2 \to \mathcal{F}$.

The second assertion follows from the first one taking $p_t(x,y)=(S(t)\d_y)(x)$.

To prove the last assertion we first show that $S$
 is a strong  Feller semigroup, that is,   for $t>0$,  $S(t)$ maps bounded Borel
measurable functions into continuous ones. To this end it
suffices to verify that $x\mapsto p_t(x,\cdot)$ is a continuous
function from $\Omega$ to $L^1$ for all $t>0$. If
$\gamma(\Omega)<\infty$ this immediately follows from the fact
that $p_t(x,y)$ is continuous in $x$ for all $t>0$ and
$y\in\Omega$ and the bound $0\le p_t(x,y)\le \psi(t)$. In case
$\gamma(\Omega)=\infty$ to verify the assumptions of the Vitali theorem it suffices
to show that, for every $x_n\to x$ in $\Omega$ as $n\to\infty$
and every $\varepsilon>0$, there exists a compact
$K_\varepsilon\subset \Omega$ and $N_\varepsilon\in \mathbb{N}$
such that
\[
\int\limits_{\Omega\setminus K_\varepsilon}p_t(x_n,y)\gamma(dy)<\varepsilon \mbox{ for all }n>N_\varepsilon.
\]
Given $x_n\to x$ in $\Omega$ as $n\to\infty$ and
$\varepsilon>0$, let $K_\varepsilon\subset \Omega$ be such that
\[
\int\limits_{\Omega\setminus K_\varepsilon}p_t(x,y)\gamma(dy)<\frac\varepsilon2.
\]
Note that $\ind_{K_\varepsilon}\in L^1$ so
$S_t\ind_{K_\varepsilon}\in C_b(\Omega)$. Since $S_t\ind \in
C_b(\Omega)$, we conclude that
$S_t\ind_{\Omega\setminus
K_\varepsilon}=S_t\ind-S_t\ind_{K_\varepsilon}\in C_b(\Omega)$.
In particular,
\[
\left|\int\limits_{\Omega\setminus K_\varepsilon}p_t(x_n,y)\gamma(dy)
- \int\limits_{\Omega\setminus K_\varepsilon}p_t(x,y)\gamma(dy)\right|\to 0 \mbox{ as }n\to\infty.
\]
Now choose $N_\varepsilon$ such that the above variable is less
then $\frac\varepsilon2$ for $n>N_\varepsilon$. 
Thus $S$ is strongly Feller.

Now let $\mu_n\to \mu$ as $n\to\infty$ in the sense of
weak-$*$ convergence in $\mathcal{M}(\Omega)$. Then, for every
Borel measurable $E$,
\[
\int\limits_E \big(S(t)\mu_n\big) d\gamma = \int\limits_\Omega (S(t)\ind_E)d\mu_n
\to \int\limits_\Omega (S(t)\ind_E)d\mu = \int\limits_E \big(S(t)\mu\big) d\gamma
\mbox{ as }n\to\infty,
\]
since $S(t)\ind_E\in C_b(\Omega)$. Hence $S(t)\mu_n\to S(t)\mu$
as $n\to\infty$ weakly in $L^1$. Since
$\big(S(t)\mu_n\big)(x)=\int p_t(x,y)\mu_n(dy)$ and $p_t$ is
bounded and continuous in $y$, we conclude that $S(t)\mu_n\to
S(t)\mu$ as $n\to\infty$, pointwise in $\Omega$. Hence
$S(t)\mu_n\to S(t)\mu$ as $n\to\infty$ strongly in $L^1$. The
strong convergence in $\mathcal{F}$ and in $C_b(\Omega)$
follows from the factorization argument.
\end{proof}

Let us introduce
the convolution operator $\convl$ on $L^p(Q)$ by
$(\convl f)(x,t)= \int_0^t \left(S(t-s)f\right)(x, s)ds$.
In the next two propositions we collect the required properties of $\convl$.

\begin{proposition}
\label{SG-new}
Let condition \eqref{ultra} hold.
The following assertions hold
\begin{enumerate}
\item[1)] $\convl$ is
 a completely continuous operator on
      $L^1(Q)$;
  \item[2)] $\convl$ is a bounded operator ${L}^2(Q)\to
      L^2\left((0,T);\, \mathcal{F}\right)$ and
      ${L}^2(Q)\to L^2\left((0,T);\,
      D(\mathcal{L}) \right)$.
 \end{enumerate}
\end{proposition}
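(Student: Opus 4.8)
For 2), write $A=-\gen$, a nonnegative self-adjoint operator with $S(t)=e^{-tA}$, and note that $u:=\convl f$ is the mild solution of $u'+Au=f$ with $u(0)=0$. First I would record the trivial bound: since $\|S(r)\|_{L^2\to L^2}\le1$, Young's inequality in time gives $\|\convl f\|_{L^2(Q)}\le T\|f\|_{L^2(Q)}$. The substantive point is $\convl:L^2(Q)\to L^2((0,T);D(\gen))$, i.e.\ $\|A\convl f\|_{L^2(Q)}\le\|f\|_{L^2(Q)}$, which is $L^2$-maximal regularity and which I would read off from the spectral theorem. Diagonalising $A$, identify $L^2$ with $L^2(X,d\nu)$ so that $A$ is multiplication by $a(\cdot)\ge0$; then, extending $f$ by $0$ outside $[0,T]$, the operator $A\convl$ acts fibrewise as temporal convolution of $f(\cdot,s)$ with the scalar kernel $a\,e^{-ra}\ind_{r>0}$, whose Fourier multiplier $a/(a+i\tau)$ has modulus $\le1$ uniformly in $a\ge0$ and $\tau$. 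Plancherel then gives the bound with constant $1$ (restricting to $[0,T]$ only decreases the output norm). The bound into $L^2((0,T);\mathcal F)$ follows by interpolation, since $\mathcal F=D(A^{1/2})=[D(A),L^2]_{1/2}$ and complex interpolation commutes with $L^2$ in the time variable; alternatively it comes from the energy identity obtained by pairing $u'+Au=f$ with $u$ and integrating, giving $\int_0^T\mathcal E(u)\,dt\le\|f\|_{L^2(Q)}\|u\|_{L^2(Q)}\le T\|f\|_{L^2(Q)}^2$.

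For 1), the idea is to combine the temporal smoothing of the convolution with the ultracontractive bound \eqref{ultra}. For $\epsilon>0$ split $\convl=R_\epsilon+\convl^{(\epsilon)}$, where $R_\epsilon f(t)=\int_{(t-\epsilon)^+}^{t}S(t-s)f(s)\,ds$ and $\convl^{(\epsilon)}f(t)=\int_0^{(t-\epsilon)^+}S(t-s)f(s)\,ds$. A Fubini computation using $\|S(r)\|_{L^1\to L^1}\le1$ shows $\|R_\epsilon\|_{L^1(Q)\to L^1(Q)}\le\epsilon$, so $R_\epsilon\to0$ in operator norm; since complete continuity is stable under operator-norm limits, it suffices to show that each $\convl^{(\epsilon)}$ is completely continuous. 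The gain from \eqref{ultra} is that on the region $s<t-\epsilon$ the kernel $\ind_{s<t-\epsilon}\,p_{t-s}(x,y)$ is bounded by $\psi(\epsilon)$, by \eqref{kern-est} and monotonicity of $\psi$.

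So let $f_n\rightharpoonup0$ weakly in $L^1(Q)$; I must prove $\convl^{(\epsilon)}f_n\to0$ strongly. For each fixed $(x,t)$ the slice $(y,s)\mapsto\ind_{s<t-\epsilon}\,p_{t-s}(x,y)$ lies in $L^\infty(Q)$, so weak convergence gives $\convl^{(\epsilon)}f_n(x,t)\to0$ pointwise; moreover $\|\convl^{(\epsilon)}f_n\|_{L^\infty(Q)}\le\psi(\epsilon)\sup_n\|f_n\|_{L^1(Q)}$, which makes the family uniformly integrable. I would then conclude by Vitali's theorem, the remaining ingredient being tightness of $\{\convl^{(\epsilon)}f_n\}$ at spatial infinity. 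This is the main obstacle, and it is exactly where I use that a weakly convergent sequence in $L^1$ is relatively weakly compact, hence (Dunford--Pettis) tight: for an exhaustion $\Omega_k\uparrow\Omega$ by sets of finite $\gamma$-measure, $\sup_n\int_{(\Omega\setminus\Omega_k)\times[0,T]}|f_n|\to0$. By Fubini and symmetry of the kernel, $\int_{(\Omega\setminus\Omega_k)\times[0,T]}|\convl^{(\epsilon)}f_n|\le\int_Q|f_n(y,s)|\,G_k(y)\,\gamma(dy)\,ds$ with $G_k(y)=\int_0^T\big(S(r)\ind_{\Omega\setminus\Omega_k}\big)(y)\,dr\le T$ and $G_k\downarrow0$ pointwise as $k\to\infty$ by monotone convergence (using $\int_\Omega p_r(x,y)\gamma(dx)\le1$); splitting this integral by the tightness and uniform integrability of $\{f_n\}$ makes it small uniformly in $n$ for $k$ large.

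The delicate conceptual point to keep in mind throughout 1) is that $\convl$ need \emph{not} be weakly compact (for a translation-invariant $S$ it is not, since it maps spatial translates to spatial translates), so complete continuity cannot be deduced from weak compactness together with the Dunford--Pettis property; it must be read off directly from the tightness of the given weakly convergent sequence $\{f_n\}$. This is why the argument isolates the tail estimate above rather than seeking to compactify the image of the whole unit ball.
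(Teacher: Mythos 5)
Your proof is correct, but on both assertions it takes a genuinely different route from the paper's. For assertion 2) the paper writes $\|(\convl f)(t)\|_{\mathcal F}\le c\int_0^t (t-s)^{-1/2}\|f(s)\|_{L^2}\,ds$ and $\|(\gen\convl f)(t)\|_{L^2}\le c\int_0^t (t-s)^{-1}\|f(s)\|_{L^2}\,ds$ and then asserts that the integral operators on $L^2(0,T)$ with kernels $(t-s)^{-1/2}$ and $(t-s)^{-1}$ are bounded; for the second kernel this is delicate, since it is not even locally integrable across the diagonal, so the naive Schur-type bound does not close. Your spectral-theorem argument, with the Fourier multiplier $a/(a+i\tau)$ of modulus at most $1$, is the standard proof of $L^2$-maximal regularity for nonnegative self-adjoint generators and yields the $D(\gen)$-bound rigorously with constant $1$; the $\mathcal F$-bound via interpolation or the energy identity is likewise sound. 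For assertion 1) the paper treats $\convl$ as an integral operator with kernel $\ind_{\{t>s\}}p_{t-s}(x,y)$, checks $\iint k(x,t;y,s)\,\gamma(dx)\,dt\le T$, and invokes the Dunford--Pettis lemma of Diestel--Uhl; your argument instead peels off the near-diagonal piece $R_\epsilon$ of operator norm at most $\epsilon$ and proves complete continuity of the remainder by hand (pointwise convergence from weak convergence against the bounded kernel slices, the uniform bound $\psi(\epsilon)$, and Vitali with tightness extracted from the Dunford--Pettis characterization of relative weak compactness in $L^1$). This is more elementary and self-contained, and it handles the case $\gamma(\Omega)=\infty$ explicitly rather than relying on a result stated for finite measure spaces; your observation that $\convl$ itself need not be weakly compact (translation-invariant example), so that complete continuity cannot be deduced from weak compactness plus the Dunford--Pettis property of $L^1$, is accurate and is a real point of care that the paper's one-line citation passes over. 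The only place you are sketchy is the final uniform-in-$n$ smallness of $\int_Q|f_n|\,G_k$: since $G_k\downarrow0$ only pointwise, you need Egorov on the finite-measure set $\Omega_m$ combined with the uniform integrability of $\{f_n\}$ there; this is routine but should be spelled out.
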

\begin{proof}
Note that $\convl$ is an integral operator on $L^1(Q)$ and
\[
(\convl f)(x,t)=\iint\limits_{Q}k(x,t;y,s)f(y,s)\g (dy)\,ds
\]
with $k(x,t;y,s)=\ind_{(0,\infty)}(t-s)p_{t-s}(x,y)$. Since
\[
\iint\limits_{Q}k(x,t;y,s)\g(dx)\,dt
=\int\limits_0^T\ind_{(0,\infty)}(t-s)\int\limits_\Omega p_{t-s}(x,y)\g(dx)\, dt
\le \int\limits_s^T dt \le T
\]
for a.a. $(y,s)\in Q$, it follows by the Dunford--Pettis lemma
(see e.g. \cite[Lemma III.11]{du77}) that $\mathcal{T}$ is
completely continuous on $L^1(Q)$.

To prove the next assertion, observe that
\[
\|(\convl f)(t)\|_{\mathcal{F}}\leq c\int_0^t\frac1{\sqrt{t-s}}\|f\|_{L^2}(s)ds \quad \mbox{ and }\quad
\|(\mathcal{L}\convl f)(t)\|_{L^2}\leq c\int_0^t\frac1{{t-s}}\|f\|_{L^2}(s)ds.
\]
Since the integral operators with the kernels 
$K_0(t,s)=\frac1{\sqrt{t-s}}$ and $K_1(t,s)=\frac1{{t-s}}$ are
bounded on $L^2(0,T)$, the second assertion follows.
\end{proof}

\begin{proposition} Let the conditions of Proposition~\ref{SG} be fulfilled.
In addition assume that
\begin{equation}
\label{w-cont}
S(t)\phi(x)\to \phi(x)\mbox{ as }t\to0
\mbox{ for all } x\in\Omega,\phi\in C_0(\Omega).
\end{equation}
Then $\convl$ can be uniquely extended to a bounded operator from $\mathcal{M}(Q)$ to
      $L^\infty\left((0,T);\,L^1\right)$. Moreover,
      $\convl L^1(Q)\subset C\left([0,T];\,L^1\right).$
\end{proposition}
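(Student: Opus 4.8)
The plan is to prove the two claims — extension of $\convl$ to $\mathcal{M}(Q)$ with values in $L^\infty((0,T);L^1)$, and the inclusion $\convl L^1(Q)\subset C([0,T];L^1)$ — by exploiting the explicit integral representation of $\convl$ together with the mapping bounds for $S(t)$ already established in Proposition~\ref{SG}. The central quantitative tool is the estimate $\|S(t)\|_{\mathcal{M}\to L^1}\le 1$, which follows from \eqref{MtoL} with $q=1$ (so that $\psi^{1/q'}=\psi^0=1$); this uniform-in-$t$ bound in the $L^1$ target is what makes an $L^\infty$-in-time estimate possible despite the singularity of the kernel $K_0,K_1$ at $t=s$.

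First I would set up the extension. For a measure $\nu\in\mathcal{M}(Q)$, disintegrate it (or more simply work with the formal definition) so that $(\convl\nu)(t)=\int_{[0,t)}S(t-s)\,\nu(ds)$ as an $L^1$-valued object, where $\nu(ds)$ denotes the time-marginal with its $\Omega$-measure part. The key estimate is
\[
\|(\convl\nu)(t)\|_{L^1}\le \int_{[0,t)}\|S(t-s)\nu(\cdot,s)\|_{L^1}\le \int_{[0,t)}\|\nu(\cdot,s)\|_{\mathcal{M}(\Omega)}\le \|\nu\|_{\mathcal{M}(Q)},
\]
using $\|S(t-s)\|_{\mathcal{M}\to L^1}\le1$ for each $s<t$. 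Taking the supremum over $t\in(0,T)$ gives $\|\convl\nu\|_{L^\infty((0,T);L^1)}\le\|\nu\|_{\mathcal{M}(Q)}$, so $\convl$ is bounded $\mathcal{M}(Q)\to L^\infty((0,T);L^1)$. Uniqueness of the extension follows because $L^1(Q)$ (hence simple functions) is weak-$*$ dense in $\mathcal{M}(Q)$ and the operator is weak-$*$-to-weak-$*$ closable by the same bound.

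For the continuity statement I would first treat a dense class. For $f\in C_c(\Omega\times(0,T))$, the map $t\mapsto(\convl f)(t)\in L^1$ is continuous: splitting $(\convl f)(t+\tau)-(\convl f)(t)$ into the increment $\int_t^{t+\tau}S(t+\tau-s)f(s)\,ds$, whose $L^1$-norm is $O(\tau)$ by the uniform bound, plus $\int_0^t\bigl(S(t+\tau-s)-S(t-s)\bigr)f(s)\,ds$, which tends to $0$ in $L^1$ using the strong continuity of $S$ on $L^1$ implied by \eqref{w-cont} together with the strong-Feller and $L^1$-contraction properties from Proposition~\ref{SG}. Thus $\convl f\in C([0,T];L^1)$ on the dense class. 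Since $\convl$ is bounded $L^1(Q)\to L^\infty((0,T);L^1)$ and $C([0,T];L^1)$ is closed in $L^\infty((0,T);L^1)$, a density argument extends the inclusion to all $f\in L^1(Q)$. The main obstacle I anticipate is justifying the second term's convergence at $t=0$ and the measurability/disintegration bookkeeping for the measure case: the kernel singularity at $s=t$ is integrable in the $L^1$ target but one must be careful that the Bochner integrals are well-defined $L^1$-valued objects, which is precisely where the complete continuity of $\convl$ on $L^1(Q)$ from Proposition~\ref{SG-new}(1) and the strong-Feller regularization of $S(t)$ are invoked.
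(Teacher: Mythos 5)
Your proposal is correct and follows essentially the same route as the paper: the $\mathcal{M}(\Omega)\to L^1$ contraction bound for $S(t)$ gives the $L^\infty\left((0,T);\,L^1\right)$ estimate, and the continuity statement is obtained from the same two-term decomposition of $(\convl f)(t+\tau)-(\convl f)(t)$ (small increment over $[t,t+\tau]$ plus a dominated-convergence term using strong $L^1$-continuity of $S$ at positive times). The one bookkeeping point you defer --- making the Bochner integral $\int_{[0,t)}S(t-s)\mu_s\,\nu(ds)$ well defined for general $m\in\mathcal{M}(Q)$ --- is handled in the paper not via complete continuity or strong-Feller regularization but by disintegrating $m=\mu_t\otimes\nu$ and invoking the Pettis measurability theorem, the integrand being weakly $\nu$-measurable and separably valued since $S(s)\mathcal{M}(\Omega)\subset L^1$ for $s>0$.
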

\begin{proof}
First, observe that, $S(t)\mu$ is strongly continuous in
$L^1\subset\mathcal{M}(\Omega)$ for all $t>0$, for every $\mu\in
\mathcal{M}(\Omega)$.   Moreover, it follows from \eqref{w-cont} that
$S(t)\mu$ is $w$-$*$ continuous
continuous at $t=0$.

Now let $m\in\mathcal{M}(Q)$ and $m=\mu_t\otimes\nu$ be its
disintegration into $\nu\in \mathcal{M}([0,T])$ and a
function $t\mapsto \mu_t\in \mathcal{M}(\Omega)$
such that $t\mapsto \mu_t(F)$ is $\nu$-measurable for all Borel sets
$F$ (see \cite[Theorem~2.28]{afp06}). So $t\mapsto \mu_t$ is a weakly $\nu$-measurable
function from $[0,T]$ to $\mathcal{M}(\Omega)$. Hence the function $s\mapsto S(t-s)\mu_s$ is
also a weakly $\nu$-measurable
function from $[0,t]$ to $\mathcal{M}(\Omega)$. Since $S(s)\mathcal{M}(\Omega)\subset L^1$
for $s>0$, we conclude that $s\mapsto S(t-s)\mu_s$ is separably valued, hence it is (strongly)
$\nu$-measurable by the Pettis measurability theorem (see \cite[Theorem~2.2]{du77}).
So we define the extension of $\convl$ on  $\mathcal{M}(Q)$ by
\begin{equation}
\label{Tau-meas}
(\convl m)(t):= \int\limits_{[0,t]}S(t-s)\mu_s\nu(ds),
\end{equation}
where the right hand side is a Bochner integral.

Moreover, $\convl : \mathcal{M}(Q) \to L^\infty((0,T);\,\mathcal{M}(\Omega))$ is bounded. Indeed,
\[
\left\|
(\convl m)(t)
\right\|_{\mathcal{M}(\Omega)}
\leq \int_0^t \left\|S(t-s)\mu_s\right\|_{\mathcal{M}(\Omega)}|\nu|(ds)
\leq \int_0^t \left\|\mu_s\right\|_{\mathcal{M}(\Omega)}|\nu|(ds)
= \|m\|_{\mathcal{M}(Q)}.
\]
Hence the extension is unique.

Now, let $\nu=\nu_c + \sum c_k\delta_{t_k}$ be the
decomposition of $\nu$ into the continuous and the atomic
parts. Then $\int_0^tS(t-s)\mu_s\nu_c(ds) \in
L^1$ since $S(t-s)\mu_s \in L^1$
for all $s\in[0,t)$, and
\[
\int_0^tS(t-s)\mu_s\sum c_k\delta_{t_k}(ds) = \sum\limits_{t_k\leq t} c_kS(t-t_k)\mu_{t_k}.
\]
The latter belongs to $L^1$ for all $t\ne t_k$,
$k=1,2,\ldots$. So $\convl m (t)\in L^1$ for a.a
$t\in[0,T]$.

Finally, we show that if $\nu=\nu_c$ then $\convl m\in
C\big([0,T];\, L^1\big)$, which will prove the last assertion. Indeed,
\[
\convl m(t+h) - \convl m(t)
= \int\limits_t^{t+\delta}S(t + \delta -s)\mu_s\nu(ds)
+\int\limits_0^t\Big[S(t+\delta-s) - S(t-s)\Big]\mu_s\nu(ds).
\]
Then
\[
\Big\|\int\limits_t^{t+\delta}S(t + \delta -s)\mu_s\nu(ds)\Big\|_{L^1}\le
\int\limits_t^{t+\delta} \|\mu_s\|_{\mathcal{M}(\Omega)}|\nu|(ds)=|m|\big(\Omega\times(t,t+\delta)\big)\to0
\mbox{ as }\delta\to0.
\]
Further,
\[\Big\|\big[S(t+\delta-s) -
S(t-s)\big]\mu_s\Big\|_{L^1}\to 0 \mbox{ as }\delta\to0
\mbox{ for all }s\in[0,t).\] Moreover, $\Big\|\big[S(t+\delta-s) -
S(t-s)\big]\mu_s\Big\|_{L^1}\le
2\|\mu_s\|_{\mathcal{M}(\Omega)}$. Thus
\[
\Big\| \int\limits_0^t\Big[S(t+\delta-s) - S(t-s)\Big]\mu_s\nu(ds)\Big\|_{L^1}\to0
\mbox{ as }\delta\to0.
\]
\end{proof}

\begin{remark}
For further use we observe that, for $\eta\in C\big([0,T];\, L^1\cap C_b\big)$,
\[
(\convl^*\eta)(t)=\int\limits_t^TS(s-t)\eta(s)ds,
\]
where the right hand side is a Bochner integral. Note that $\convl^*$ is a bounded
operator on $C\big([0,T];\, L^1\cap C_b\big)$, by the argument similar to the one in the proof of
the preceding proposition.
\end{remark}

\begin{definition}\label{wsolution}
Let $m\in \mathcal{M}(Q)$ and $\mu\in\mathcal{M}(\Omega)$. We
say that $u\in L^2_{loc}\Big((0,\infty);\, \mathcal{F}\Big)\cap
L^1(Q)$ is a solution (sub-solution) to the problem
\begin{equation}
\label{m-Cauchy}
(\partial_t -
\gen)u = m, \quad u(0)= \mu
\end{equation}
if the following integral identity (inequality)
holds
\begin{eqnarray}
\nonumber
    &&\int\limits_\Omega u(t_1)\zeta(t_1)\g(dx) -\int\limits_{t_0}^{t_1}\int\limits_\Omega u \partial_t\zeta\,\g(dx)\,dt + \int\limits_{t_0}^{t_1}\mathcal{E}\big(u(t),\zeta(t)\big)dt\\
    \label{wequ}
&& =\, (\leq)\,  \int\limits_{t_0}^{t_1}\int\limits_\Omega\zeta\, m(dxdt)
+ \int\limits_\Omega u(t_0)\zeta(t_0)\g(dx)\,
\end{eqnarray}
and
\begin{equation}\label{winit}
\lim\limits_{t\to0}\,(\limsup\limits_{t\to0})\,\int\limits_\Omega
u(t)\zeta(t)\g(dx) =\,(\leq)
\, \int\limits_\Omega \zeta(0)d\mu
\end{equation}
for all $t_1>t_0>0$ and $\zeta\in W$, $\zeta\geq0$, where
\[
W:=\left\{\zeta\in C_b(Q)\cap
L^2_{loc}\Big((0,\infty);\,\mathcal{F}\Big)\cap
W^{1,\infty}_{loc}\Big((0,\infty);\, L^\infty\Big)\right\}.
\]
\end{definition}

The next lemma provides the representation of the solution to \eqref{m-Cauchy}.

\begin{lemma}\label{uniq-non-hom}
Let $u$ be a solution (sub-solution) to \eqref{m-Cauchy}.
Then $u =S\mu + \convl m$ \ ($u \le S\mu + \convl m$).

\end{lemma}
\begin{proof}
We prove the assertion for solutions, the proof for sub-solutions being completely the same.
Let $\eta\in C\big([0,T];\, L^1\cap C_b\big)$, $\lambda>0$. Denote
\[\eta_\lambda(t):=(\mathrm{I} - \lambda\gen)^{-1}\eta(t), t\in[0,T] \mbox{ and }
\zeta_\lambda:=\convl^*\eta_\lambda.
\]
Then $\eta_\lambda, \zeta_\lambda \in  C\big([0,T];\, L^1\cap C_b\big)$. Since
$\gen
\eta_\lambda(t)=\frac1\lambda\eta(t) - \frac1\lambda\eta_\lambda(t)$ for all $t\in[0,T]$, we conclude that
$\gen\eta_\lambda(\cdot),\, \gen\zeta_\lambda(\cdot) \in  C\big([0,T];\, L^1\cap C_b\big)$. Hence $\partial_t
\zeta_\lambda = -\gen \zeta_\lambda - \eta_\lambda.$ In
particular, $\zeta_\lambda\in W$.

Testing \eqref{wequ} by $\zeta_\lambda$ and noticing that $\xi_\lambda(T)=0$ we obtain
\[
-\int_{t_0}^T \int_\O u \partial_t \zeta_\lambda d\g\, dt
+ \int_{t_0}^T \mathcal{E}(u,\zeta_\lambda)dt = \int_{t_0}^T \int_\O \zeta_\lambda dm
+ \int_\O u(t_0) \zeta_\lambda(t_0)d\gamma.
\]
Note that
\[
\mathcal{E}(u,\zeta_\lambda) = - \int_\O u \gen \zeta_\lambda d\gamma = \int_\O u (\partial_t \zeta_\lambda + \eta_\lambda)d\gamma.
\]
Hence, passing to the limit as $t_0\to0$ we obtain that
\[
\int\limits_0^T\int\limits_\Omega u\eta_\lambda d\g\, dt = \int_{0}^T \int_\O \zeta_\lambda dm
+ \int_\O \zeta_\lambda(0)d\mu = \int\limits_0^T \int\limits_\Omega (\convl m) \eta_\lambda d\g\, dt
+ \int_0^T\int\limits_\Omega (S\mu) \eta_\lambda d\g\, dt,
\]
where the last equality follows from  \eqref{Smeas} and the definition of $\convl$. 
Finally, observe that $\eta_\lambda\to\eta$ as $\lambda\to0$
pointwise, so passing to the limit in $\lambda$, we have
\[
\int\limits_0^T\int\limits_\Omega u\eta\, d\g\, dt =\int\limits_0^T\int\limits_\Omega (\convl m + S\mu)\eta\, d\g dt.
\]
Hence the assertion follows.
\end{proof}
The next proposition gives a version of a maximum principle. It is an extension of \cite[Lemma~3]{bf83}.

\begin{proposition}\label{positive}
Let $f\in {L}^1(Q)$,
$\mu\in\mathcal{M}(\Omega)$, and $u$
 be a solution to \eqref{m-Cauchy}. 
Then, for $t\in(0,T)$,
\[
\begin{split}
\int\limits_\Omega u^+(t) d\g &
\leq
\int\limits_0^t \int\limits_{\Omega}f\ind_{\{u>0\}}d\g ds
+ \int\limits_{\Omega}d\mu^+,
\\
\int\limits_\Omega |u(t)| d\g &\leq \int\limits_0^t \int\limits_{\Omega}f\sgn(u)  d\g ds
+ \int\limits_{\Omega}d|\mu|.
\end{split}
\]
\end{proposition}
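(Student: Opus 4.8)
The plan is to exploit the representation formula $u = S\mu + \convl f$ from Lemma~\ref{uniq-non-hom} together with a Kato-type inequality obtained by testing the weak formulation \eqref{wequ} against a suitable regularized sign function. First I would treat the two inequalities in parallel, noting that the $|u|$-estimate is the $\sgn$-version of the $u^+$-estimate (replace $\ind_{\{u>0\}}$ by $\sgn(u)$ and $\mu^+$ by $|\mu|$), so the work is essentially the same in both cases. The natural test object is $\zeta = S^*\!\big(T_\lambda(\eta)\big)$-type functions as in the proof of Lemma~\ref{uniq-non-hom}, but here I want to push the convexity of $s\mapsto s^+$ through the evolution. Concretely, for $\eps>0$ let $\beta_\eps$ be a smooth convex approximation of $s\mapsto s^+$ with $0\le\beta_\eps'\le1$ and $\beta_\eps'\to\ind_{(0,\infty)}$ pointwise, and formally test the equation against $\beta_\eps'(u)\,\zeta$ for nonnegative $\zeta\in W$. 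Since $\gen$ generates a Dirichlet form, the Markovian (Beurling--Deny) structure gives $\mathcal{E}\big(u,\beta_\eps'(u)\zeta\big)\ge \mathcal{E}\big(\beta_\eps(u),\zeta\big)$ in the limit, i.e. the bad gradient term has a favorable sign; this is exactly the contraction property that makes the argument work for a general Dirichlet form and not just the Laplacian.

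The key steps, in order, are as follows. First, justify the regularization: because $f\in L^1(Q)$ and $\mu\in\mathcal{M}(\Omega)$, by Lemma~\ref{uniq-non-hom} the solution $u=S\mu+\convl f$ lies in $L^\infty((0,T);L^1)$ by Propositions~\ref{SG} and the mapping properties of $\convl$, so the quantities $\int_\Omega u^+(t)\,d\g$ make sense for a.a.\ $t$. Second, derive the differential inequality: using $\beta_\eps$ as above and integrating \eqref{wequ} in the distributional-in-time sense, one obtains
\[
\frac{d}{dt}\int_\Omega \beta_\eps(u)\,d\g \le \int_\Omega f\,\beta_\eps'(u)\,d\g
\]
after discarding the nonnegative form term. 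Third, pass to the limit $\eps\to0$: here $\beta_\eps(u)\to u^+$ and $\beta_\eps'(u)\to\ind_{\{u>0\}}$, with dominated convergence controlled by $|u|\in L^1$ and $|f|\in L^1$, giving
\[
\frac{d}{dt}\int_\Omega u^+(t)\,d\g \le \int_\Omega f\,\ind_{\{u>0\}}\,d\g.
\]
Fourth, integrate from $0$ to $t$ and use the initial condition \eqref{winit}: the boundary term at $t=0$ produces exactly $\int_\Omega d\mu^+$, since $\beta_\eps(u(0))\to\mu^+$ in the appropriate weak sense and $\beta_\eps$ is convex with $\beta_\eps(s)\le s^+$, while the Steklov averaging $T_\delta$ (defined in the preliminaries) can be invoked to make the time-derivative manipulations rigorous before removing the mollification.

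The main obstacle I anticipate is the rigorous handling of the initial datum, i.e.\ controlling $\lim_{t\to0}\int_\Omega\beta_\eps(u(t))\,d\g$ and showing it is dominated by $\int_\Omega d\mu^+$ rather than $\int_\Omega d|\mu|$. The subtlety is that $\mu$ is only a signed measure and $u(t)$ converges to $\mu$ merely in the weak-$*$ sense of \eqref{winit}, so one cannot directly pass convexity through the limit; the fix is to use that $\beta_\eps$ is nonnegative and convex, so $\int_\Omega\beta_\eps(u(t))\zeta\,d\g \le \int_\Omega u^+(t)\zeta\,d\g$, combined with testing against $\zeta\to\ind$ and using that $S(t)$ regularizes $\mu$ so that the negative part of $\mu$ does not contribute to the positive part of the solution near $t=0$. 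A secondary technical point is justifying that the Dirichlet-form contraction $\mathcal{E}(u,\beta_\eps'(u)\zeta)\ge\mathcal{E}(\beta_\eps(u),\zeta)$ survives in the weak formulation with the time-dependent test function $\zeta$; this follows from the Markovian property of $(\mathcal{E},\mathcal{F})$ together with the fact that $\beta_\eps'$ is a normal contraction, but it must be checked that $\beta_\eps(u)\in L^2_{loc}((0,\infty);\mathcal{F})$, which is where the local energy bound \eqref{int-KO} and the local boundedness from Lemma~\ref{local-quali} are used.
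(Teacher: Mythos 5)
Your proposal is correct and follows essentially the same route as the paper: a monotone Lipschitz approximation of the sign (the paper uses $v_k(s)=(ks)^+\wedge 1$ with primitive $V_k\uparrow s^+$, i.e.\ your $\beta_\eps'$ and $\beta_\eps$), positivity of $\mathcal{E}(v_k(u),u)$ via the Beurling--Deny structure of the approximating forms, integration in time, and then the limit $\tau\to0$ handled through the representation $u=S\mu+\convl f$ and the positivity of $S$ and $\convl$, which gives $u^+(\tau)\le S(\tau)\mu^+ + (\convl f^+)(\tau)$ and hence the $\mu^+$ (not $|\mu|$) boundary term — exactly the fix you anticipate for your "main obstacle." The only packaging difference is that the paper first reduces to $f\in L^1(Q)\cap L^2(Q)$ so that the equation holds strongly and no spatial cutoff $\zeta$ is needed, thereby sidestepping the $\zeta$-dependent Kato inequality $\mathcal{E}(u,\beta_\eps'(u)\zeta)\ge\mathcal{E}(\beta_\eps(u),\zeta)$; note also that membership of $\beta_\eps(u)$ in $\mathcal{F}$ comes from the normal-contraction property of Dirichlet forms, not from \eqref{int-KO} or Lemma~\ref{local-quali}, which belong to the concrete weighted-Laplacian setting rather than the abstract one of this proposition.
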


\begin{proof}
Note that $u=S\mu + \convl f$, by Lemma \ref{uniq-non-hom}. 
It suffices to prove the inequalities for
$f\in L^1(Q)\cap L^2(Q)$
since $\convl$ is a bounded operator on $L^1(Q)$.
By Proposition~\ref{SG-new} $u\in
L^2_{loc}\Big((0,T);\,\mathcal{F}\Big)$,
$\partial_t u, \, \gen u\in
L^2_{loc}\Big((0,T);\,{L}^2\Big)$ and
\begin{equation}\label{positive-appr}
    (\partial_t - \gen)u = f.
\end{equation}

Now we prove the first estimate. Denote $v_k(s):=(ks)^+\wedge
1$, $k=1,2,\ldots$ Then $v_k$ is Lipschiz, non-decreasing,
$v_k(0)=0$ and $v_k\to \ind_{\{s>0\}}$ as $k\to\infty$.
Hence $v_k(u)\in L^2_{loc}\Big((0,T);\,\mathcal{F}\Big)$ (cf.~\cite[Theorem~1.4.1]{FOT}).

We claim that
$\mathcal{E}\left(v_k(u),u\right)\geq0$ . 
Indeed, recall that, for all $u,v\in \mathcal{F}$ one has
$\mathcal{E}(u,v)=\lim\limits_{\lambda\to\infty}\mathcal{E}^\lambda(u,v)$,
where
\[
\mathcal{E}^\lambda(u,v)=\mathcal{E}\left(u,\lambda(\lambda-\gen)^{-1}v\right)
\]
is the approximation of $\mathcal{E}$.
By \cite[(1.4.8)]{FOT}, there exist positive measures $\mu_\lambda\in
\mathcal{M}(\Omega)$ and $\sigma_\lambda\in
\mathcal{M}(\Omega\times \Omega)$ such that
\[
\mathcal{E}^\lambda(u)=\int_\Omega u^2\mu_\lambda(dx) + \iint_{\Omega\times \Omega}\left(u(x)-u(y)\right)^2\sigma_\lambda(dx,dy).
\]
Then it is straightforward that
$\mathcal{E}^\lambda(\rho(u),u)\geq 0$ for all $\lambda>0$ and
all Lipschiz monotone $\rho$ such that $\rho(0)=0$.
Hence passing to the limit as
$\lambda\to\infty$, we conclude that $\mathcal{E}(v_k(u),u)\geq 0$.

Now multiply \eqref{positive-appr} by $v_k(u)$ in ${L}^2$
to obtain that
\[
\int\limits_\Omega v_k(u(s)) \partial_t u(s)\,d\g \leq \int\limits_\Omega v_k(u) f\,d\g.
\]
Integrating the latter in $s$ over the interval $(\tau,t)$ we obtain
\[
\int\limits_\Omega V_k\big(u(t)\big)d\g \leq \int_\tau^t\int\limits_\Omega v_k(u) f\,d\g ds
+ \int\limits_\Omega V_k\big(u(\tau)\big)d\g,
\]
where $V_k(s)$ is the primitive of $v_k(s)$, $V_k(s)\uparrow
s^+$ as $k\uparrow\infty$. So, for $0<\tau<t$, it follows that
\[
\int\limits_\Omega u^+(t)d\g \leq \int\limits_\tau^t\int\limits_\Omega f\ind_{\{u>0\}}d\g ds
+ \int\limits_\Omega u^+(\tau)d\g.
\]
It remains to pass to the limit $\tau\to 0$. By
Lemma~\ref{uniq-non-hom} using positivity of $S$ and $\convl$, we
have that $u^+(\tau)=(S(\tau)\mu + (\convl f)(\tau))^+ \le
S(\tau)\mu^+ + (\convl f^+)(\tau)$ and
\[
\int\limits_\Omega (\convl f^+)(\tau) d\g = \int\limits_0^\tau \int\limits_\Omega S(\tau-s)f^+(s)d\g ds
\le \int\limits_0^\tau \|f(s)\|_{L^1}ds\to0 \mbox{ as }\tau\to0.
\]
So, as $\tau\to0$ we arrive at the first assertion.

To prove the second assertion, note that $v=(-u)$ is the
solution to the problem $(\partial_t - \gen)v= -f$,
$v(0)=-\mu$. Hence
\[
\int\limits_\Omega u^-(t)d\g \leq -\int\limits_0^t\int\limits_\Omega f\ind_{\{u<0\}} d\g ds
+ \int\limits_\Omega d\mu^-.
\]
\end{proof}

We conclude this section by recalling two results on the parabolic equation with a weighted Laplacian.
\paragraph{Linear equation for a weighted Laplacian.}
Here
we consider a special of the measure $d\g =h^2 dx$ and the operator $\gen u=- h^{-2}\div(h^2\nabla u)$, where as before $h(x)=|x|^\l$ with $\l>\frac{2-N}2$.
Namely, we state the 
Mean-value inequality
and the heat kernel estimates for
the linear
equation
\begin{equation}\label{linear}
\partial_t u - h^{-2}\div(h^2\nabla u)=0.
\end{equation}

\begin{theorem}[
Mean-value inequality]\label{harnack}
There exists a constant $C>0$ such that, for all $(x,t)\in
\mathbb{R}^{N+1}$, $r>0$, $q>0$ and a weak positive
(sub-)solution $u$ to \eqref{linear} in the cylinder
$Q^{(x,t)}_{2r}:= B_{2r}(x)\times(t-4r^2,t+4r^2)$, the
following inequality holds: for $Q^-:=B_{r/2}(x)\times (t-2r^2,t)$
and $Q^+:=B_r(x)\times (t+3r^2,t+4r^2)$,
\[
\sup\limits_{Q^-}u\le C\left(\dashint_{ _{\scriptstyle Q^+}}u^q\right)^\frac1q,
\]
where the average integral in the right hand side is by measure
$h^2dxdt$.
\end{theorem}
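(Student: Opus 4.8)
The plan is to reduce the statement to the scale-invariant parabolic Harnack inequality for the weighted Laplacian $\Delta_{h^2}=h^{-2}\div(h^2\n\,\cdot\,)$ on the metric measure space $\big(\R^N,\,|x-y|,\,d\mu\big)$ with $d\mu=h^2dx=|x|^{2\l}dx$, from which the asserted inequality is read off at once. First I would use the parabolic scaling $(x,t)\mapsto(\delta x,\delta^2t)$, which maps solutions (subsolutions) of \eqref{linear} to solutions (subsolutions) and multiplies $d\mu$ by the constant $\delta^{N+2\l}$; since the right-hand side is an average against $d\mu\,dt$, it is invariant under this scaling and it suffices to treat $r=1$. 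Time translation is available because the coefficients are autonomous, but spatial translation is \emph{not}, as $|x|^{2\l}$ is not translation invariant; this is why the centre $x$ must be kept general.

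The two analytic inputs are a Caccioppoli (energy) inequality and a global weighted Sobolev inequality. For the latter, writing $D:=N+2\l>2$ and passing to polar coordinates gives $\int|v|^2h^2dx=\int_0^\infty s^{D-1}\int_{S^{N-1}}|v|^2\,d\omega\,ds$ and $\int|\n v|^2h^2dx=\int_0^\infty s^{D-1}\int_{S^{N-1}}\big(|\partial_s v|^2+s^{-2}|\n_\omega v|^2\big)\,d\omega\,ds$, so that $\E_h$ is the Dirichlet energy of a model of homogeneous dimension $D$. Hence, for $D>2$, the Caffarelli--Kohn--Nirenberg/Sobolev inequality
\[
\Big(\int_{\R^N}|v|^{\frac{2D}{D-2}}h^2dx\Big)^{\frac{D-2}{D}}\le C\int_{\R^N}|\n v|^2h^2dx,\qquad v\in C_c^\infty(\R^N),
\]
holds with $C=C(N,\l)$ (the radial mode gives the classical one-dimensional weighted Sobolev inequality, valid for $D>2$, and the higher spherical modes only improve the estimate); the same reduction shows that $d\mu$ is doubling with exponent $D$ and that the scale-invariant $L^2$-Poincar\'e inequality holds on every ball with a constant depending only on $N$ and $\l$.

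With these inputs I would run the standard parabolic Moser iteration for \eqref{linear}: the energy inequality for the subsolutions $u^p$ (the drift $\frac{2\l}r\partial_r$ being absorbed by the weighted Hardy inequality, exactly as in the proof of Proposition~\ref{local-quanti}) combined with the Sobolev inequality above yields a reverse-H\"older gain $L^{2p}\to L^{2p\sigma}$, $\sigma=\frac{D}{D-2}$, at each step; summing the resulting geometric sequence over cylinders shrinking toward $Q^-$ gives, for subsolutions, the local bound $\sup_{Q^-}u\le C(\dashint_{\tilde Q}u^q)^{1/q}$ over a comparable cylinder $\tilde Q\supset Q^-$ reaching back in time, first for $q\ge1$ and then for all $q>0$ by the usual self-improvement (Bombieri--Giusti). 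For genuine solutions the placement of $Q^+$ strictly after $Q^-$ is covered by the full scale-invariant parabolic Harnack inequality, which follows from volume doubling together with the Poincar\'e inequality by the theorem of Grigor'yan and Saloff-Coste (and Sturm) \cite{gs-c05}: it gives $\sup_{Q^-}u\le C\inf_{Q^+}u$, and since $\inf_{Q^+}u\le(\dashint_{Q^+}u^q)^{1/q}$ for every $q>0$, the assertion follows. The main obstacle is verifying the Poincar\'e (equivalently Sobolev) inequality uniformly down to the origin when $2\l\ge N$, where $|x|^{2\l}$ fails to be a Muckenhoupt $A_2$ weight and the Fabes--Kenig--Serapioni theory does not apply directly; this is precisely what the radial reduction to effective dimension $D=N+2\l>2$ circumvents.
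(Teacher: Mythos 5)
The paper does not prove Theorem~\ref{harnack} at all: it only points to \cite{gs-c05,MT}, and your sketch is a faithful reconstruction of the route those references take (volume doubling plus a uniform scale-invariant $L^2$-Poincar\'e inequality for the measure $|x|^{2\lambda}dx$, the Grigor'yan--Saloff-Coste/Sturm characterization of the parabolic Harnack inequality, and Moser iteration for the subsolution half). So the overall strategy is the right one, and your observation that the centre of the cylinder cannot be normalized away --- so that all estimates must be uniform over balls that may contain the origin --- identifies the genuine difficulty correctly. You are also right, and more careful than the statement itself, about subsolutions: with $Q^+$ lying strictly in the future of $Q^-$ the inequality is \emph{false} for subsolutions (take $u=(v-c)^+$ with $v$ a positive caloric function decreasing in time and $c$ chosen so that $v<c$ on $Q^+$ but not on $Q^-$); the version you actually prove, with the mean taken over a comparable cylinder containing $Q^-$, is the correct one and is exactly what the paper uses in the proof of Proposition~\ref{local-quanti}.

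There is, however, one step that fails as written. The ``global weighted Sobolev inequality of effective dimension $D=N+2\lambda$'', namely $\big(\int|v|^{2D/(D-2)}h^2dx\big)^{(D-2)/D}\le C\int|\nabla v|^2h^2dx$, is false whenever $\lambda<0$ (the case of a genuine Hardy potential $\kappa>0$): testing on $v=\phi\big((x-x_0)/\eps\big)$ with $|x_0|=1$ gives left-hand side $\asymp\eps^{N(D-2)/D}$ and right-hand side $\asymp\eps^{N-2}$, and since $D<N$ the inequality degenerates as $\eps\to0$. The underlying reason is that the space has local dimension $N$ away from the origin and ``dimension'' $D$ only at the origin, so no single global Sobolev exponent works unless $D\ge N$. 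Relatedly, the argument that ``the higher spherical modes only improve the estimate'' is not valid for $L^q$ norms with $q\ne2$, which do not decompose over spherical harmonics. What the Moser/Harnack machinery actually requires --- and what is true uniformly over all balls, including those meeting the origin --- is the family of volume-normalized \emph{local} Sobolev--Poincar\'e inequalities on balls (with some uniform exponent $\nu\ge\max(N,N+2\lambda)$), which follow from doubling together with the $(2,2)$-Poincar\'e inequality; verifying that Poincar\'e inequality directly for the weight $|x|^{2\lambda}$ is the nontrivial input, and it is precisely what is carried out in \cite{MT}. With the global Sobolev inequality replaced by this local family, the rest of your argument goes through.
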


\begin{theorem}\label{hk}
Let $k$ be the fundamental solution $k$ to the equation \eqref{linear}.
Then
for all $\delta>0$ there
exists $c_\delta>0$ such that for all $x,y\in \R^N$ and $t>0$ the following estimate holds:
\begin{equation}\label{hkb}
k(t,x,y)\le c_\delta t^{-\frac {N+2\lambda}2}e^{-\frac{|x-y|^2}{4(1+\delta)t}}\left( \frac{|x|}{\sqrt t}
+ 1\right)^{-\lambda} \left( \frac{|y|}{\sqrt t}
+ 1\right)^{-\lambda}.
\end{equation}

\end{theorem}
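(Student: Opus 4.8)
The plan is to read off the on-diagonal profile and the two boundary-layer factors from the two-sided bounds already recalled in the Introduction, and then to upgrade the (non-optimal) Gaussian factor to the near-sharp constant $4(1+\delta)$ by Davies' exponential-perturbation / integrated-maximum-principle method applied directly to the weighted form.

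First I would record the on-diagonal profile. By the ground-state transform of Section~1, $\Delta_{h^2}$ on $L^2_{h^2}$ is unitarily equivalent, via $Uu=u/h$, to $-H$ with $H=-\Delta-\frac\kappa{r^2}$ on $L^2$; hence the kernels are related by $k(t,x,y)=p(t,x,y)\,|x|^{-\lambda}|y|^{-\lambda}$, where $p$ is the heat kernel of $H$. Dividing the cited upper bound for $p$ (cf.\ \cite{LS,MS,MT}) by $|x|^\lambda|y|^\lambda$ and using $(|x|+\sqrt t)^{-\lambda}=t^{-\lambda/2}\big(\frac{|x|}{\sqrt t}+1\big)^{-\lambda}$ gives, for some $c_4>0$,
\[
k(t,x,y)\le C\,t^{-\frac{N+2\lambda}2}\Big(\tfrac{|x|}{\sqrt t}+1\Big)^{-\lambda}\Big(\tfrac{|y|}{\sqrt t}+1\Big)^{-\lambda}e^{-\frac{|x-y|^2}{c_4 t}}.
\]
Writing $V(x,r):=\int_{B_r(x)}|z|^{2\lambda}dz\simeq r^N(|x|+r)^{2\lambda}$ for the weighted volume, this yields the diagonal bound $k(t,x,x)\le C/V(x,\sqrt t)$, and the symmetric quantity $\big[V(x,\sqrt t)V(y,\sqrt t)\big]^{-1/2}$ reproduces exactly the product of boundary-layer factors in the claim. (The same on-diagonal profile can be obtained independently from a weighted Nash/Faber--Krahn inequality for $\Delta_{h^2}$.) Thus the only content of the theorem beyond the cited estimates is the improvement of the Gaussian constant from the generic $c_4$ to the sharp $4(1+\delta)$.

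To achieve this I would run Davies' method on the weighted form $\mathcal{E}_h(u)=\|\nabla u\|_{L^2_{h^2}}^2$. For a bounded Lipschitz $\psi$ with $|\nabla\psi|\le1$ and $\alpha\in\R$, the energy identity for $u(t)=e^{t\Delta_{h^2}}u_0$,
\[
\frac{d}{dt}\int u^2e^{2\alpha\psi}h^2dx=-2\int|\nabla u|^2e^{2\alpha\psi}h^2dx-4\alpha\int u\,\nabla u\cdot\nabla\psi\,e^{2\alpha\psi}h^2dx,
\]
together with Young's inequality on the cross term and $|\nabla\psi|\le1$, gives $\frac{d}{dt}\int u^2e^{2\alpha\psi}h^2dx\le 2\alpha^2\int u^2e^{2\alpha\psi}h^2dx$, so the twisted semigroup $e^{\alpha\psi}e^{t\Delta_{h^2}}e^{-\alpha\psi}$ is bounded on $L^2_{h^2}$ by $e^{\alpha^2t}$. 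Combining this with the on-diagonal bound through the factorisation $e^{\alpha\psi}e^{t\Delta_{h^2}}e^{-\alpha\psi}=\big(e^{\alpha\psi}e^{\frac t2\Delta_{h^2}}\big)\big(e^{\frac t2\Delta_{h^2}}e^{-\alpha\psi}\big)$, choosing $\psi$ to approximate $z\mapsto|z-y|$ (so $\psi(x)-\psi(y)=|x-y|$), and optimising $-\alpha|x-y|+\alpha^2t$ at $\alpha=|x-y|/2t$, produces the factor $e^{-|x-y|^2/4t}$ multiplying $[V(x,\sqrt t)V(y,\sqrt t)]^{-1/2}$. The extra $(1+\delta)$, with $c_\delta\to\infty$ as $\delta\to0$, enters when the spatially varying profile $V(\cdot,\sqrt t)$ is transferred between $x$ and $y$; here one uses Grigor'yan's integrated-maximum-principle refinement to keep the constant arbitrarily close to $4$.

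The main obstacle is precisely this last sharpening in the presence of the singular weight $|x|^{2\lambda}$: one must verify that $V(x,r)\simeq r^N(|x|+r)^{2\lambda}$ is doubling and regularly varying (which holds for every $\lambda>-\frac{N-2}2$, since then $2\lambda>-N$), so that the exponential-weight argument closes with the volume factors evaluated at both endpoints and the cross-term constant lands on $4(1+\delta)$ rather than a larger multiple. A secondary point is to confirm the mild conservativeness/regularity hypotheses needed to run the integrated maximum principle for $\Delta_{h^2}$, which follow from the Feller and ultracontractivity properties already exploited in Section~3. Once these are in place, assembling the on-diagonal profile with the sharpened Gaussian factor yields \eqref{hkb}.
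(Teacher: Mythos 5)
The paper does not actually prove Theorem~\ref{hk}: the statement is quoted from the literature, with the remark that the detailed exposition can be found in \cite{gs-c05,MT}. Your outline is consistent with, and essentially reconstructs, the route taken there. The first half --- transporting the cited two-sided bounds for the Hardy heat kernel $p(t,x,y)$ through the ground-state transform via $k(t,x,y)=p(t,x,y)\,|x|^{-\lambda}|y|^{-\lambda}$ --- is correct and immediately yields \eqref{hkb} with some constant $c_4$ in place of $4(1+\delta)$; note, though, that this step presupposes the quoted estimates, so at that point the only genuinely new content of the theorem is the sharp Gaussian constant. For that sharpening your plan is the right one, subject to a caveat you yourself flag: the naive Davies scheme (twisted $L^2$-bound $e^{\alpha^2 t}$ plus uniform ultracontractivity) does not close here, because the on-diagonal profile $1/V(x,\sqrt t)$ with $V(x,r)\simeq r^N(|x|+r)^{2\lambda}$ is spatially varying --- indeed for $\lambda<0$ the kernel $k(t,\cdot,\cdot)$ is not uniformly bounded for fixed $t$ --- so one must invoke Grigor'yan's pointwise on-diagonal-to-off-diagonal theorem \cite{g97} or the integrated maximum principle as in \cite{gs-c05}. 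These apply because $V(x,\cdot)$ is doubling and regularly varying for every $\lambda>-\frac{N-2}2$ (since $N+2\lambda>2$), and with that substitution your argument is complete and matches what the cited sources provide.
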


The detailed exposition of these and related results can be found in \cite{gs-c05,MT}.

\section{Source solutions }

Here we use the same notation as in the previous section.
In this section we construct solutions to an abstract semilinear equation with measures as initial data.
We closely follow ideas from \cite[Chapter~6]{Ver}.

Consider the solution of the non-linear
equation
\begin{equation}
\label{nonlinear1}
\left(\partial_t - \gen\right)u(x,t) +
g\left(x,u(x,t)\right)=0,\quad  u(0)=\mu\in
\mathcal{M}(\Omega),
\end{equation}
where $\gen$ is as in the previous
section and $g:\Omega\times \mathbb{R}\to \mathbb{R}$ is
measurable in $x$ for all $r\in\mathbb{R}$, continuous in $r$
for a.a. $x\in\Omega$ (the Caratheodory conditions),
non-decreasing in $r$ and vanishing at $r=0$ for a.a
$x\in\Omega$. We denote $G:u\mapsto g\left(x,u(x)\right)$ the
correspondent monotone homogeneous Nemytskii operator. So a weak
solution to the problem \eqref{nonlinear1}, e.g.
$(\partial_t - \gen + G)u=0$,
$u(0)=\mu$, is $u\in L^1(Q)\cap L^2_{loc}((0,T);\, \cal{F})$ such that $Gu\in
L^1(Q)$ and $(\partial_t - \gen)u = - Gu$,
$u(0)=\mu$ in the sense of Definition~\ref{wsolution}. In
particular,
\begin{equation}
\label{repres}
u\ =S\mu - \convl Gu.
\end{equation}

\begin{proposition}\label{monotone}
Let $\mu_1,  \mu_2\in \mathcal{M}(\Omega)$, $\mu_1\le \mu_2$,  $g_1(x,r)\ge
g_2(x,r)$ for all $r\in\mathbb{R}$ and a.a. $x$,  $G_1,G_2$
be the corresponding Nemytskii operators and $u_j\in L^1(Q)$
be solutions to the problems $(\partial_t - \gen + G_j)u_j=0$,
$u_j(0)=\mu_j$, $j=1,2$. Then $u_1\leq u_2$ pointwise for a.a. $(x,t)\in Q$.
\end{proposition}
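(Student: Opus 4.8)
The plan is to reduce the comparison to a single application of the maximum principle of Proposition~\ref{positive}, applied to the difference $w := u_1 - u_2$; everything then comes down to checking that the effective right-hand side has the correct sign on the set $\{w > 0\}$.

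First I would record the representation of $w$. By \eqref{repres}, $u_j = S\mu_j - \convl G_j u_j$ with $G_j u_j = g_j(\cdot, u_j) \in L^1(Q)$. Subtracting and using the linearity of $S$ and $\convl$ gives
\[
w = S(\mu_1 - \mu_2) + \convl f, \qquad f := -\big(g_1(\cdot, u_1) - g_2(\cdot, u_2)\big),
\]
where $f \in L^1(Q)$ because $G_1 u_1, G_2 u_2 \in L^1(Q)$. By Lemma~\ref{uniq-non-hom} this identifies $w$ as a solution to $(\partial_t - \gen)w = f$ with initial datum $\mu_1 - \mu_2 \in \mathcal{M}(\O)$ in the sense of Definition~\ref{wsolution}, so the hypotheses of Proposition~\ref{positive} are met. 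Its first inequality yields, for every $t \in (0,T)$,
\[
\int_\O w^+(t)\, d\g \le \int_0^t \int_\O f\, \ind_{\{w > 0\}}\, d\g\, ds + \int_\O d(\mu_1 - \mu_2)^+.
\]

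Next I would dispose of the two terms on the right. Since $\mu_1 \le \mu_2$, one has $(\mu_1 - \mu_2)^+ = 0$, so the initial-data term vanishes. For the volume term, on the set $\{w > 0\} = \{u_1 > u_2\}$ I combine the two monotonicity hypotheses: from $g_1(x,r) \ge g_2(x,r)$ and the fact that $g_2$ is non-decreasing in $r$, together with $u_1 > u_2$,
\[
g_1(x, u_1) \ge g_2(x, u_1) \ge g_2(x, u_2),
\]
whence $f = -\big(g_1(\cdot, u_1) - g_2(\cdot, u_2)\big) \le 0$ on $\{w > 0\}$. Consequently the volume term is $\le 0$, and we obtain $\int_\O w^+(t)\, d\g \le 0$ for all $t$, i.e. $w^+ = 0$ for a.a. $(x,t) \in Q$, which is exactly $u_1 \le u_2$.

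The proof is mostly bookkeeping around the already-established maximum principle; its one genuine ingredient is the pointwise chain $g_1(x,u_1) \ge g_2(x,u_1) \ge g_2(x,u_2)$, which is precisely where both the ordering $g_1 \ge g_2$ and the monotonicity of $g_2$ enter. The step I expect to need the most care is the very first one — verifying that $w$ is a weak solution in the exact sense of Definition~\ref{wsolution} with an $L^1(Q)$ right-hand side — since this is what licenses the use of Proposition~\ref{positive}; here the linearity of the defining relations \eqref{wequ}--\eqref{winit} in the data $(m,\mu)$ and the integrability $G_j u_j \in L^1(Q)$ are what make the reduction legitimate.
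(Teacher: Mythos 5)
Your proof is correct and follows essentially the same route as the paper: set $w=u_1-u_2$, apply the maximum principle of Proposition~\ref{positive} to $(\partial_t-\gen)w=-(G_1u_1-G_2u_2)$ with $w(0)=\mu_1-\mu_2\le0$, and check the sign of the right-hand side on $\{w>0\}$. The only (immaterial) difference is the order of the two monotonicity steps: you use $g_1\ge g_2$ first and then monotonicity of $g_2$, while the paper uses monotonicity of $g_1$ first and then $g_1\ge g_2$.
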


\begin{proof}
Let $w=u_1-u_2$. Then $w$ satisfies $(\partial_t - \gen)w =
-(G_1u_1-G_2u_2),$ $w(0)= -(\mu_2-\mu_1)\le0$. By Proposition
\ref{positive}, for $t>0$,
\[
\int\limits_\Omega w^+(t) h^2dx \leq -\int\limits_0^t \int\limits_{\Omega}(G_1u_1 - G_2u_2)\ind_{\{w>0\}}d\g\, ds.
\]
However, $w>0$ implies $u_1>u_2$ and hence $G_1u_1\ge G_1u_2\ge
G_2u_2$. So the above yields $w^+=0$ and $u_1\leq u_2$.
\end{proof}

The next corollary is a straightforward consequence of Proposition~\ref{monotone}.

\begin{corollary}\label{uniq&bounds}
Let $\mu\in\mathcal{M}(\Omega)$, $G$ be a monotone homogeneous
Nemytskii operator. There exists at most one solution to the
problem $(\partial_t - \gen + G)u=0$, $u(0)=\mu$. The solution
satisfies the estimates
\[
-S\mu^- \le u \le S\mu^+
\]
and
\begin{equation}\label{energy}
\int_\tau^T \|u(s)\|^2_{\mathcal{F}}\,ds
\le \frac12\psi(\tau)\|\mu\|_{\mathcal{M}(\Omega)}^2.
\end{equation}
\end{corollary}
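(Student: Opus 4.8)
The plan is to derive all three assertions from the comparison principle of Proposition~\ref{monotone}, the representation \eqref{repres}, and the ultracontractivity bound \eqref{MtoL}, in the order in which they appear in the statement.

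\emph{Uniqueness.} First I would obtain uniqueness by applying Proposition~\ref{monotone} twice with the \emph{same} data and nonlinearity. If $u_1,u_2$ are two solutions of $(\partial_t-\gen+G)u=0$, $u(0)=\mu$, then taking $\mu_1=\mu_2=\mu$ and $g_1=g_2=g$ gives $u_1\le u_2$; interchanging the roles of $u_1$ and $u_2$ gives $u_2\le u_1$. Hence $u_1=u_2$ a.e.\ in $Q$, which is the ``at most one solution'' claim.

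\emph{Two-sided bounds.} Next I would prove $-S\mu^-\le u\le S\mu^+$. For the upper bound set $w:=u-S\mu^+$. Since $S\mu^+$ solves the linear equation with data $\mu^+$, the function $w$ solves $(\partial_t-\gen)w=-Gu$ with $w(0)=\mu-\mu^+=-\mu^-\le0$. Applying the first estimate of Proposition~\ref{positive} (with right-hand side $-Gu$ and initial measure $-\mu^-$, whose positive part vanishes) yields $\int_\Omega w^+(t)\,d\g\le-\int_0^t\int_\Omega Gu\,\ind_{\{w>0\}}\,d\g\,ds$. On the set $\{w>0\}$ one has $u>S\mu^+\ge0$, and since $g(x,\cdot)$ is non-decreasing with $g(x,0)=0$ this forces $Gu\ge0$ there; hence $w^+\equiv0$, i.e.\ $u\le S\mu^+$. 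The lower bound $u\ge-S\mu^-$ is obtained in the same way applied to $-S\mu^--u$, using $Gu\le0$ on $\{u<0\}$. Equivalently, these bounds are read off directly from Proposition~\ref{monotone} by comparing $u$ with the solutions $\bar u$, $\underline u$ carrying data $\mu^+$, resp.\ $-\mu^-$: Proposition~\ref{monotone} gives $\underline u\le u\le\bar u$ and (comparison with the zero solution) $\bar u\ge0\ge\underline u$, whence \eqref{repres} and the positivity of $\convl$ yield $\bar u\le S\mu^+$ and $\underline u\ge-S\mu^-$.

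\emph{Energy estimate.} Finally I would test the equation with $u$ itself. For $\tau>0$ chosen so that $u(\tau)\in\mathcal{F}$, multiplying $(\partial_t-\gen)u=-Gu$ by $u$ and integrating over $\Omega$ gives $\tfrac{d}{ds}\tfrac12\|u(s)\|_{L^2}^2+\|u(s)\|_{\mathcal F}^2+\int_\Omega g(x,u)u\,d\g=0$. Because $g(x,r)r\ge0$ (monotonicity through the origin), the last term is non-negative, so $\int_\tau^T\|u(s)\|_{\mathcal F}^2\,ds\le\tfrac12\|u(\tau)\|_{L^2}^2$. The two-sided bound gives $|u(\tau)|\le S(\tau)|\mu|$, whence \eqref{MtoL} with $q=2$ gives $\|u(\tau)\|_{L^2}\le\psi^{1/2}(\tau)\|\mu\|_{\mathcal M(\Omega)}$, and \eqref{energy} follows. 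The main obstacle is rigorously justifying the energy identity, since using $u$ as a test function is not immediate in the weak formulation: I would regularize in time by the Steklov average $T_\delta$ and integrate by parts before letting $\delta\to0$, exactly as in the proof of Proposition~\ref{positive}, selecting $\tau$ in a set of full measure where $u(\tau)\in\mathcal{F}$ by virtue of $u\in L^2_{loc}\big((0,T);\,\mathcal F\big)$.
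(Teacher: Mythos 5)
Your argument is correct, and for the uniqueness and the two-sided bounds it is essentially the paper's: both are read off from Proposition~\ref{monotone}. (The paper phrases the bounds as a comparison with the solutions of the modified problems $(\partial_t-\gen+G^{\mp})v^{\pm}=0$, $v^{\pm}(0)=\pm\mu^{\pm}$, where $G^{\mp}$ is the Nemytskii operator of $\ind_{[\mp r\ge0]}g(x,r)$, chosen precisely so that $v^{\pm}=\pm S\mu^{\pm}$ solve them exactly; your two variants --- running the Proposition~\ref{positive} argument on $u-S\mu^+$ directly, or comparing with the solutions for data $\mu^{\pm}$ and then invoking \eqref{repres} and the positivity of $\convl$ --- are equivalent to this.) The one genuine divergence is how the energy identity is justified. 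The paper does not use Steklov averages: it tests \eqref{wequ} with $\zeta_\lambda(t):=S(\lambda)u(t)$, which lies in the test class $W$ because $S(\lambda)$ maps into $C_b(\Omega)$ (strong Feller) and because differentiating $\zeta_\lambda=S(\lambda+\cdot)\mu-S(\lambda)\convl Gu$ shows $\partial_t\zeta_\lambda\in L^\infty$; by symmetry of $\mathcal{E}$ the Dirichlet term becomes $\int\mathcal{E}\big(\zeta_{\lambda/2}(t)\big)dt$, and one then lets $\lambda\to0$. Your Steklov regularization handles the time derivative but not the requirement $\zeta\in C_b(Q)$ in the definition of $W$: $T_\delta u$ need not be continuous in $x$, so you would still need a spatial smoothing --- exactly what the semigroup regularization supplies in one stroke, while simultaneously symmetrizing the form. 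This is a repairable technicality rather than a gap in the idea; the remainder of your computation, including $u\,Gu\ge0$ and $\|u(\tau)\|_{L^2}^2\le\psi(\tau)\|\mu\|^2_{\mathcal{M}(\Omega)}$ via the pointwise bound and \eqref{MtoL}, matches the paper.
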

\begin{proof}
The first assertion is clear from Proposition~\ref{monotone}.
The second assertion follows from the comparison of the solution
to the problem $(\partial_t - \gen + G)u =0$, $u(0)=\mu$, with 
the solutions to the problems $(\partial_t - \gen +
G^{\mp})v^{\pm} =0$, $v^{\pm}(0)=\pm\mu^{\pm}$, where $G^{\mp}$
is the Nemytskii operator corresponding to the function $\ind_{[\mp
r\ge0]} g(x,r)$. Note that $v^{\pm}=\pm S\mu^{\pm}$ and that
$\ind_{[-r\ge0]}g(x,r)\le g(x,r) \le \ind_{[r\ge0]}g(x,r)$.
Hence the pointwise estimate follows.

Now we prove \eqref{energy}. For $\lambda>0$ let
$\zeta_\lambda(t):=S(\lambda)u(t)$, $t\in[0,T]$. Since $u\in
C\big([0,T];\, L^1\big)\cap L^2_{loc}\big((0,T);\,
\mathcal{F}\big)$, one has $\zeta_\lambda\in C\big([0,T];\,
L^1\cap C_b\big)\cap L^2_{loc}\big((0,T);\, \mathcal{F}\big)$.
Moreover, differentiating the equation $\zeta_\l(t)=S(\lambda+t)\mu-S(\lambda)(\convl Gu)(t)$, we obtain
\[
\big(\partial_t \zeta_\lambda\big)(t) = \gen S(\lambda)u(t) - S(\lambda)\big(Gu\big)(t),~t\in(0,T).
\]
Hence $\zeta_\lambda\in W^{1,1}_{loc}\big((0,T);\,
L^\infty\big)$. Since $|u(t)|\le S(t)|\mu|$, $t\in(0,T]$, we
conclude that $u\in L^\infty_{loc}\big((0,T);\, L^\infty\big)$.
Hence \eqref{wequ} holds with $\zeta=\zeta_\lambda$. For $\tau \in (0,T)$ we have
\[
\frac12\big\|\zeta_{\lambda/2}(T)\big\|_{L^2}^2 +
\int\limits_\tau^T \mathcal{E}\big(\zeta_{\lambda/2}(t)\big)dt
+ \int\limits_\tau^T\int\limits_\Omega \zeta_\lambda Gu\,d\gamma dt = \frac12\big\|\zeta_{\lambda/2}(\tau)\big\|_{L^2}^2.
\]
Passing to the limit as $\lambda\to0$, we arrive at
\[
\frac12\big\|u(T)\big\|_{L^2}^2 +
\int\limits_\tau^T \mathcal{E}\big(u(t)\big)dt
+ \int\limits_\tau^T\int\limits_\Omega u\, Gu\,d\gamma dt = \frac12\big\|u(\tau)\big\|_{L^2}^2.
\]
Finally, observe that $u\, Gu\ge0$ a.e. and that
$\big\|u(\tau)\big\|_{L^2}^2\le
\big\|S(\tau)|\mu|\big\|_{L^2}^2\le
\psi(\tau)\|\mu\|_{\mathcal{M}(\Omega)}^2.$
\end{proof}

\begin{proposition}\label{conv}
Let $g_n(x,r)\to g(x,r)$ for a.a. $x$ and locally uniformly in $r\in\mathbb{R}$, as
$n\to\infty$. Let $G_n, G$ be the corresponding monotone homogeneous  Nemytskii operators.
Let $\mu_n,\mu \in \mathcal{M}(\Omega)$ be such that $\mu_n\to\mu$ in the weak-$*$ topology of $\mathcal{M}(\Omega)$. 
In addition assume that
\begin{equation}\label{iintegral}
w:= \sup\limits_n \left[G_nS \mu^+_n - G_n(-S\mu^-_n)\right]\in L^1(Q).
\end{equation}
Let $u_n$ be the solutions to the problems
\begin{equation}
\label{e-n}
(\partial_t - \gen + G)u_n=0, \quad u_n(0)=\mu_n, \quad n\in
\mathbb{N}.
\end{equation}
Then $u_n\to u$ in $L^1(Q)$ as $n\to\infty$, and $u$ is
the solution to the problem $(\partial_t - \gen + G)u=0$,
$u(0)=\mu.$
\end{proposition}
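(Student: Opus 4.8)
The plan is to prove convergence by combining the representation formula \eqref{repres}, the complete continuity of $\convl$ on $L^1(Q)$ from Proposition~\ref{SG-new}, and the weak-$*$ continuity of $S$ established in Proposition~\ref{SG}, with the uniform integrability hypothesis \eqref{iintegral} providing the crucial domination. First I would record the a~priori bounds: by Corollary~\ref{uniq&bounds} each $u_n$ exists, is unique, and satisfies the pointwise sandwich $-S\mu_n^-\le u_n\le S\mu_n^+$. Writing the representation $u_n = S\mu_n - \convl G_nu_n$, monotonicity of $G_n$ together with this sandwich gives the pointwise bound
\[
|G_nu_n|\le \max\bigl(G_nS\mu_n^+,\ -G_n(-S\mu_n^-)\bigr)\le w,
\]
so that $(G_nu_n)_n$ is dominated by $w\in L^1(Q)$ and hence is a bounded, uniformly integrable sequence in $L^1(Q)$.

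Next I would extract convergence. Since $\convl$ is completely continuous on $L^1(Q)$ and $(G_nu_n)_n$ is bounded and uniformly integrable in $L^1(Q)$, the sequence $(\convl G_nu_n)_n$ is relatively compact in $L^1(Q)$; passing to a subsequence, $\convl G_nu_n\to v$ strongly in $L^1(Q)$. At the same time $\mu_n\to\mu$ weak-$*$ in $\mathcal{M}(\Omega)$, so by the last assertion of Proposition~\ref{SG}, $S(t)\mu_n\to S(t)\mu$ strongly in $L^1$ for each $t>0$, and with the domination $|S(t)\mu_n|\le S(t)|\mu_n|$ and the uniform mass bound one obtains $S\mu_n\to S\mu$ in $L^1(Q)$. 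Therefore $u_n=S\mu_n-\convl G_nu_n\to u:=S\mu-v$ in $L^1(Q)$, at least along the subsequence. Passing to a further subsequence we may assume $u_n\to u$ and $G_nu_n\to$ (some limit) almost everywhere on $Q$.

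The main obstacle, and the step deserving the most care, is identifying the nonlinear limit, i.e.\ showing $v=\convl Gu$, equivalently $G_nu_n\to Gu$ in $L^1(Q)$. Here I would use the hypothesis that $g_n(x,r)\to g(x,r)$ for a.a.\ $x$ and locally uniformly in $r$, combined with the a.e.\ convergence $u_n\to u$: for a.a.\ $(x,t)$ one has $g_n(x,u_n(x,t))\to g(x,u(x,t))$ pointwise, since $u_n(x,t)$ eventually lies in a compact $r$-interval on which $g_n\to g$ uniformly while $g(x,\cdot)$ is continuous. Thus $G_nu_n\to Gu$ a.e.; the domination by $w\in L^1(Q)$ then upgrades this to $L^1(Q)$ convergence by the dominated convergence theorem, and continuity of $\convl$ yields $\convl G_nu_n\to\convl Gu$, so $v=\convl Gu$ and $u=S\mu-\convl Gu$. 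In particular $Gu\in L^1(Q)$, and passing to the limit in the weak formulation \eqref{wequ}--\eqref{winit} for $u_n$ (using $S\mu_n\to S\mu$ weak-$*$ at $t=0$ to control the initial condition) shows $u$ solves $(\partial_t-\gen+G)u=0$, $u(0)=\mu$.

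Finally I would remove the passage to subsequences. Since the limit problem $(\partial_t-\gen+G)u=0$, $u(0)=\mu$ has at most one solution by Corollary~\ref{uniq&bounds}, every subsequence of $(u_n)_n$ has a further subsequence converging in $L^1(Q)$ to this same $u$; hence the full sequence converges, $u_n\to u$ in $L^1(Q)$, which completes the proof.
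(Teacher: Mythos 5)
Your proposal is correct and follows essentially the same route as the paper: domination of $G_nu_n$ by $w$, complete continuity of $\convl$ on $L^1(Q)$ plus $S\mu_n\to S\mu$ to extract an $L^1(Q)$- and a.e.-convergent subsequence, identification of the limit via the locally uniform convergence $g_n\to g$ and dominated convergence, and the uniqueness from Corollary~\ref{uniq&bounds} to upgrade to full-sequence convergence. The only detail the paper makes explicit that you gloss over is the use of the energy estimate \eqref{energy} to get weak compactness of $(u_n)$ in $L^2_{loc}\big((0,T);\,\mathcal{F}\big)$, which is what lets one pass to the limit in the term $\int\mathcal{E}(u_n(t),\zeta(t))\,dt$ of the weak formulation \eqref{wequ}.
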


\begin{proof}
First note that the sequence $(\mu_n)_n$ is bounded in
$\mathcal{M}(\Omega)$ since it is weak-$*$ convergent. Let
$M=\sup\limits_n\|\mu_n\|_{\mathcal{M}(\Omega)}<\infty$.
Now we have to pass to the limit in \eqref{e-n}.

Since $|G_nu_n|\le w \in L^1(Q)$ and by Proposition \ref{positive},
\[
\|G_n u_n\|_{L^1(Q)}\le M,
\]
the sequence $(G_nu_n)_n$ is a pre-compact set in the weak topology in $L^1(Q)$.
 By Proposition~\ref{SG-new}, $\convl$ is a completely
continuous operator on $L^1(Q)$.  Moreover, $S\mu_n\to S\mu$ by
Proposition~\ref{SG}. Therefore the sequence
$(\convl Gu_n)_n$, and hence the sequence $(u_n)_n$ are compact
in $L^1(Q)$. Moreover, due to~\eqref{energy}, $(u_n)_n$ is
weakly compact in
$L^2_{\mathrm{loc}}\big((0,T);\,\mathcal{F}\big)$.
 Let $(u_{n_l})$ be a sub-sequence of $(u_n)_n$ convergent,
in $L^1(Q)$ strongly, in
$L^2_{\mathrm{loc}}\big((0,T);\,\mathcal{F}\big)$ weakly and
a.e. on $Q$ to a limit $u$. Note that $|u_{n_l}(t)|\le
S(t)|\mu_{n_l}|\le M\psi(t)$ a.e. by
Corollary~\ref{uniq&bounds} and \eqref{MtoL}. Since for all
$t>0$ and a.a. $x\in \Omega$ one has $g_n(x,r)\to g(x,r)$ as
$n\to\infty$ uniformly in $r\in \left[-M\psi(t),
M\psi(t)\right]$, we conclude that $G_{n_l}u_{n_l}\to Gu$ as
$l\to\infty$ a.e. on $Q$. So $G_{n_l}u_{n_l}\to Gu$ in
$L^1(Q)$, by the Lebesgue dominated convergence theorem. Hence
we can pass to the limit in the equality $u_{n_l} = S\mu_{n_l}
- \convl G_{n_l}u_{n_l}$ as $l\to\infty$ and obtain that $u =
S\mu - \convl Gu$. Moreover, since $u_{n_l}\to u$ as
$l\to\infty$ weakly in
$L^2_{\mathrm{loc}}\big((0,T);\,\mathcal{F}\big)$, it follows
that $u$ satisfies \eqref{wequ}  with $f=-Gu$ for all $\zeta\in
W$. Hence $(\partial_t -\gen + G)u=0$ and $u(0)=\mu$. By
Corollary~\ref{uniq&bounds}, the solution to the latter
equation is unique so $(u_n)_n$ has a unique limit point $u$.
Hence $u_n\to u$ in $L^1(Q)$ strongly and in
$L^2_{\mathrm{loc}}\Big((0,T);\,\mathcal{F}\Big)$ weakly.
\end{proof}

The following is a straightforward consequence of Proposition~\ref{conv}.

\begin{corollary}
\label{cor5.4}
Let $G$ be a monotone homogeneous Nemytskii operator, $\mu_n\to\mu$ in weak-$*$ topology of  $\mathcal{M}(\Omega)$,
$\mu_n\ge0$, $\mathrm{supp}(\mu_n)\subset B_r$  and $\|\mu_n\|_{\mathcal{M}(\Omega)}\le c$.
Let $u_n$ be the solution to \eqref{e-n}.
Set $s_c(x,t):=c\sup\limits_{y\in B_r}p_t(x,y)$.
Assume that
\begin{equation}\label{iiintegral}
Gs_c\in L^1(Q).
\end{equation}
Then $u_n\to u$ in $L^1(Q)$ as $n\to\infty$, and $u$ is
the solution to the problem $(\partial_t - \gen + G)u=0$,
$u(0)=\mu$.
\end{corollary}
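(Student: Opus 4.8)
The plan is to derive the corollary directly from Proposition~\ref{conv} by specializing to the constant sequence of nonlinearities $g_n=g$. With this choice the associated Nemytskii operators all equal $G$, the equations \eqref{e-n} coincide with those in Proposition~\ref{conv}, and the hypothesis that $g_n\to g$ locally uniformly in $r$ is trivially satisfied. Weak-$*$ convergence $\mu_n\to\mu$ is assumed. Hence the only hypothesis of Proposition~\ref{conv} that still needs verification is the integrability condition \eqref{iintegral}, namely $w:=\sup_n\big[G S\mu_n^+ - G(-S\mu_n^-)\big]\in L^1(Q)$.

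The key computation is to simplify $w$ using nonnegativity of the data. Since each $\mu_n\ge0$ one has $\mu_n^+=\mu_n$ and $\mu_n^-=0$, so that $-S\mu_n^-=0$ and, because $G$ vanishes at $r=0$, $G(-S\mu_n^-)=0$; thus $w=\sup_n G(S\mu_n)$. I would then represent $S\mu_n$ through the kernel $p_t$ supplied by Proposition~\ref{SG}, writing $(S(t)\mu_n)(x)=\int_\Omega p_t(x,y)\,\mu_n(dy)$, and use $\mu_n\ge0$, $\mathrm{supp}(\mu_n)\subset B_r$ and $\|\mu_n\|_{\mathcal{M}(\Omega)}\le c$ to obtain the pointwise bound
\[
0\le (S\mu_n)(x,t)\le c\sup_{y\in B_r}p_t(x,y)=s_c(x,t).
\]
Next I would invoke monotonicity of $G$: since $g(x,\cdot)$ is non-decreasing and $0\le S\mu_n\le s_c$, it follows that $G(S\mu_n)\le G(s_c)$ pointwise, whence $w=\sup_n G(S\mu_n)\le G(s_c)$. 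The assumption \eqref{iiintegral} asserts exactly $G(s_c)\in L^1(Q)$, so $0\le w\le G(s_c)$ together with the measurability of $w$ yields $w\in L^1(Q)$, which is precisely condition \eqref{iintegral}. Proposition~\ref{conv} then applies and delivers the conclusion: $u_n\to u$ in $L^1(Q)$ and $u$ solves $(\partial_t-\gen+G)u=0$, $u(0)=\mu$.

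As advertised, this is a straightforward consequence of Proposition~\ref{conv}, so there is no serious obstacle. The only point deserving a modicum of care is the measurability (and well-definedness) of $s_c$ and hence of $w$: I would handle this by using the continuity of $p_t(x,\cdot)$ established in Proposition~\ref{SG} to replace the supremum over $B_r$ in the definition of $s_c$ by a supremum over a countable dense subset, so that $s_c$ and $w$ are countable suprema of measurable functions and therefore measurable.
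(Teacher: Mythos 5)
Your proposal is correct and is exactly the argument the paper intends: the paper states Corollary~\ref{cor5.4} as a ``straightforward consequence of Proposition~\ref{conv}'' without writing out the details, and your verification of \eqref{iintegral} — using $\mu_n\ge0$ to reduce $w$ to $\sup_n G(S\mu_n)$, the kernel representation with the support and mass bounds to get $0\le S\mu_n\le s_c$, and monotonicity of $G$ to conclude $w\le Gs_c\in L^1(Q)$ — is precisely the missing computation. The remark on measurability of $s_c$ via continuity of $p_t(x,\cdot)$ is a sensible extra precaution and does not change the argument.
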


The next theorem is the main result of this section.

\begin{theorem}\label{exist-uniq}
Let \eqref{ultra} and \eqref{feller} hold. Let $\mu\in
\mathcal{M}(\Omega)$ satisfy the condition 
\begin{equation}\label{integral}
\iint\limits_Q \left[GS \mu^+ - G(-S \mu^-)\right] d\g dt<\infty.
\end{equation}
Then there exists a unique solution $u=u_\mu$
to the Cauchy problem
\begin{equation}\label{cauchy}
\begin{cases}
\left(\partial_t - \gen + G\right)u =0,\\
u(0)=\mu.
\end{cases}
\end{equation}
Moreover, $[u_\mu(t) - S(t)\mu]\to 0$ in $L^1$ as $t\to0$.
\end{theorem}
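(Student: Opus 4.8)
The uniqueness of the solution and the two-sided bound $-S\mu^-\le u_\mu\le S\mu^+$ are already supplied by Corollary~\ref{uniq&bounds}, so the real task is to \emph{construct} a solution and to identify its initial trace. The plan is to obtain $u_\mu$ by a layered approximation in which the hypothesis \eqref{integral} serves \emph{precisely} as the uniform $L^1(Q)$ domination needed to pass to the limit through Proposition~\ref{conv}.

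\textbf{Base case (Lipschitz nonlinearity).} First I would treat a globally Lipschitz monotone Caratheodory function $\tilde g$ with $\tilde g(x,0)=0$, for an arbitrary $\mu\in\mathcal{M}(\Omega)$. Here the representation \eqref{repres} is recast as a fixed-point problem: the map $\Phi(u)=S\mu-\convl\tilde Gu$ sends $L^1(Q)$ into itself, since $S\mu\in L^\infty\big((0,T);L^1\big)\subset L^1(Q)$ by \eqref{MtoL}, since $|\tilde Gu|\le L|u|$, and since $\convl$ is bounded on $L^1(Q)$ with $\|\convl\|\le T$ (from the estimate in the proof of Proposition~\ref{SG-new}). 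Thus $\Phi$ is a contraction on $L^1\big(\Omega\times[0,T_0]\big)$ as soon as $T_0L<1$, yielding a unique fixed point on a short interval; restarting from the value $u(T_0)\in L^1$ (well defined because $\convl L^1(Q)\subset C([0,T];L^1)$ and $S\mu$ is $L^1$-continuous for $t>0$) and iterating covers $[0,T]$ in finitely many steps. That this fixed point is a weak solution in the sense of Definition~\ref{wsolution}, with the regularity $u\in L^2_{loc}\big((0,\infty);\mathcal{F}\big)$, follows from $\tilde Gu\in L^1(Q)\cap L^2(Q)$ together with the mapping properties of $S$ and $\convl$ in Propositions~\ref{SG} and~\ref{SG-new}.

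\textbf{Removing the truncation.} For the given $g$ I would set $g_k:=(-k)\vee g\wedge k$ and regularize each $g_k$ to a globally Lipschitz monotone $g_{k,m}$ with $g_{k,m}(\cdot,0)=0$ and $g_{k,m}\to g$ a.e.\ and locally uniformly in $r$. Denoting by $u_{k,m}$ the solutions produced above (all carrying the same datum $\mu$), the bound $-S\mu^-\le u_{k,m}\le S\mu^+$ of Corollary~\ref{uniq&bounds} together with monotonicity of $g$ gives the pointwise estimate $|G_{k,m}u_{k,m}|\le GS\mu^+-G(-S\mu^-)$, whose right-hand side lies in $L^1(Q)$ by exactly the hypothesis \eqref{integral}. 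This is the dominating function \eqref{iintegral} required in Proposition~\ref{conv}; applying that proposition with the constant sequence $\mu_n\equiv\mu$ (so $\mu_n\to\mu$ trivially) and letting first $m\to\infty$ and then $k\to\infty$ yields $u_{k,m}\to u_\mu$ in $L^1(Q)$, where $u_\mu$ solves $(\partial_t-\gen+G)u_\mu=0$, $u_\mu(0)=\mu$, and satisfies $u_\mu=S\mu-\convl Gu_\mu$ with $Gu_\mu\in L^1(Q)$. The energy bound \eqref{energy} of Corollary~\ref{uniq&bounds} is what keeps $(u_{k,m})$ weakly compact in $L^2_{loc}\big((0,T);\mathcal{F}\big)$, so the limit retains the required regularity.

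\textbf{Initial trace.} The strengthened convergence $u_\mu(t)-S(t)\mu\to0$ in $L^1$ is then read off the representation: since $u_\mu(t)-S(t)\mu=-(\convl Gu_\mu)(t)=-\int_0^tS(t-s)(Gu_\mu)(s)\,ds$ and $S(t-s)$ is an $L^1$-contraction, one has $\|u_\mu(t)-S(t)\mu\|_{L^1}\le\int_0^t\|(Gu_\mu)(s)\|_{L^1}\,ds$, and the right-hand side tends to $0$ as $t\to0$ by absolute continuity of the integral, because $Gu_\mu\in L^1(Q)$. I expect the main obstacle to be not any single estimate but the bookkeeping of the base case: verifying that the $L^1(Q)$ fixed point is genuinely a weak solution with a well-defined initial trace and with $u\in L^2_{loc}\big((0,\infty);\mathcal{F}\big)$, and checking that the truncation and regularization of $g$ preserve monotonicity and the normalization $g(\cdot,0)=0$, so that the dominating function aligns exactly with \eqref{integral} at every stage of the limit.
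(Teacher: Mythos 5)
Your overall architecture matches the paper's: uniqueness and the two-sided bound come from Corollary~\ref{uniq&bounds}, the hypothesis \eqref{integral} is used exactly as the $L^1(Q)$ dominating function \eqref{iintegral} needed to run Proposition~\ref{conv} on a sequence of truncated nonlinearities with the fixed datum $\mu$, and the initial trace $u_\mu(t)-S(t)\mu\to0$ in $L^1$ is read off the mild representation by absolute continuity of the integral of $GS\mu^+-G(-S\mu^-)$ — all three steps are the paper's. Where you genuinely diverge is the base case. The paper does not use a Lipschitz truncation plus Banach fixed point with measure data; it first takes $g$ with $\bar g(x)=\sup_r|g(x,r)|\in L^1$ and data $\mu_j\in L^1\cap L^\infty$, obtains the solution from the variational theory of monotone operators (Showalter, Theorem~III.4.1), which delivers $u_j\in L^2((0,T);\mathcal{F})\cap W^{1,2}((0,T);L^2)$ \emph{for free}, and then passes to measure data by approximating $\mu_j\to\mu$ weak-$*$ via Proposition~\ref{conv} (the domination there being simply $\bar g$). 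Your route reaches measure data in one step, which is elegant, but it buys this at the cost of the regularity question you defer to ``bookkeeping.''

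That deferred step is the one genuine gap, and it is not mere bookkeeping. The fixed point of $\Phi(u)=S\mu-\convl\tilde Gu$ lives a priori only in $L^1(Q)$, whereas Definition~\ref{wsolution} requires $u\in L^2_{loc}\big((0,\infty);\mathcal{F}\big)$ and the integral identity \eqref{wequ}; moreover Corollary~\ref{uniq&bounds} and Proposition~\ref{conv} are stated for weak solutions, so you may not invoke the bound $-S\mu^-\le u\le S\mu^+$ or the energy estimate \eqref{energy} before that membership is established. Your claim that $\tilde Gu\in L^2(Q)$ does not follow from $u\in L^1(Q)$ and the Lipschitz bound alone. To close this you need a bootstrap: from $|u(t)|\le S(t)|\mu|+L\int_0^tS(t-s)|u(s)|\,ds$ iterate to get $|u(t)|\le e^{Lt}S(t)|\mu|$, hence $u(\tau)\in L^1\cap L^\infty$ for $\tau>0$ by \eqref{MtoL}; then restart the Duhamel formula at $\tau$, where $\tilde Gu\in L^2((\tau,T);L^2)$, and apply Proposition~\ref{SG-new}(2) to get $u\in L^2((\tau,T);\mathcal{F})$ and the equation in the strong $L^2$ sense, from which \eqref{wequ} and \eqref{winit} follow. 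With that supplement your argument is sound; the paper's use of Showalter's theorem is precisely the device that makes this regularity step unnecessary.
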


\begin{proof}
First we consider $g$ such that $\bar g\in L^1$ with
$\bar g(x):=\sup\limits_r |g(x,r)|$, $x\in \Omega$. Denote
$H(x,r):=\int_0^rg(x,s)ds$. Then $H$ is a convex positive
sub-linear function in $r$ for a.a. $x\in \Omega$. Consider the
functional
\[
J(u):= \frac12 \mathcal{E}(u) + \int\limits_\Omega H\left(x,u(x)\right)\gamma(dx),~ u\in\mathcal{F}.
\]
Then $\delta J = - \gen + G$. By \cite[Theorem III.4.1,~Proposition III.4.2]{sh97}, for $j=1,2,3,\ldots$ there exists a
unique solution $u_j\in L^2\Big((0,T);\,\mathcal{F}\Big)$ to
the Cauchy problem

\begin{equation}\label{bdd-nl}
\begin{cases}
(\partial_t - \gen + G)u_j =0,\\
u_j(0)=\mu_j\in L^1\cap L^\infty.
\end{cases}
\end{equation}
Moreover, $u_j\in L^\infty\left((0,T),\mathcal{F}\right)\cap
W^{1,2}\Big((0,T);\, L^2\Big)$.

If $\mu_j\to \mu$ as $j\to\infty$ in the sense of weak-$*$
convergence of measures, then, by Proposition~\ref{conv},
$u_j\to u$ as $j\to\infty$ in $L^1(Q)$, and $u$ is the unique
solution to \eqref{cauchy}. Indeed, we have to verify condition
\eqref{iintegral}. However,
\[
\sup\limits_n \left[G_nS \mu^+_n - G_n(-S \mu^-_n)\right]
\le \bar g\in L^1.
\]
Hence the assertion follows.

For a general $g$, let $E_k\subset\Omega$ be an increasing
sequence of subsets of finite measure such that $\Omega = \cup
E_k$. For $k=1,2,3,\ldots,$ let
$g_k(x,r):=\ind_{E_k}\mathrm{sgn}\left(g(x,r)\right)\left(\left|g(x,r)\right|\wedge
k\right)$, let $G_k$ be the corresponding Nemytskii operator and
let $u_k$ be the solution to the equation $(\partial_t - \gen +
G_k)u_k = 0$, $u_k(0)=\mu$ constructed above. Then, by
Corollary~\ref{uniq&bounds},  $-S{\mu^-}\leq u_k \leq
S{\mu^+}$, and hence
\begin{equation}\label{G-est}
  |G_ku_k|\leq |Gu_k|\leq
\begin{cases}
GS{\mu^+},& u_k \geq 0,\\
-G(-S{\mu^-}),& u_k<0,
\end{cases}
\quad \leq GS{\mu^+}-G(-S{\mu^-}).
\end{equation}
Since $GS{\mu^+}-G(-S{\mu^-})\in L^1(Q)$,
Proposition~\ref{conv} implies that $u_k\to u$ as $k\to\infty$
in $L^1(Q)$, and $u$ is the solution to \eqref{cauchy}.

To prove the last assertion, note that $S\mu - u = \convl Gu$.
So, by \eqref{G-est},
\[
\int\limits_\Omega |u(t)-S\mu(t)|d\g \leq \int\limits_0^t\int\limits_\Omega \left[GS{\mu^+}-G(-S{\mu^-})\right] d \g d\tau\to0
\mbox{ as }t\to0.
\]
\end{proof}

The next corollary together with the last assertion of the previous theorem provide the proof of assertions (d) and (e)
of Theorem~\ref{main1} for $\vark<\infty$.
\begin{corollary}\label{wss}
Let $0< p<1+\frac{2+\beta}{N+2\lambda}$. Then the problem
\[
\begin{cases}
\partial_tu - h^{-2}\div\big(h^2\nabla u) + r^{\beta}|u|^{p-1}u& \mbox{ in }\mathbb{R}^N, \\
u(0)=\vark\delta_0&
\end{cases}
\]
has a unique solution $u_\vark$ for every $\vark>0$. 
Conversely, for $1<p< 1+\frac{2+\beta}{N+2\lambda}$ and
$\vark\in(0,\infty)$, if $u$ is a solution to
\eqref{wparabolic} satisfying $u(t)\to \vark\delta_0$ as
$t\to0$ in the sense of weak-$*$ convergence of measures, then
$u=u_\vark$. 
\end{corollary}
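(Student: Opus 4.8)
The plan is to read the existence and uniqueness of $u_\vark$ off the abstract Theorem~\ref{exist-uniq}, and to derive the converse from the a-priori heat-kernel domination of Corollary~\ref{subcr-est}. Throughout I specialise the framework of Sections~3--4 to $\Omega=\R^N$, $d\g=h^2dx=r^{2\lambda}dx$, $\gen u=h^{-2}\div(h^2\n u)$, and the monotone homogeneous Nemytskii operator $Gu=r^\beta|u|^{p-1}u$; the structural hypotheses \eqref{ultra} and \eqref{feller} for the associated weighted-Laplacian semigroup $S$ are those of the abstract theory, and I would invoke them as furnished by the kernel estimate \eqref{hkb} of Theorem~\ref{hk} together with the conservativeness bound in \eqref{kern-est}.

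For the first assertion I would apply Theorem~\ref{exist-uniq} with $\mu=\vark\delta_0\ge0$. Since then $\mu^-=0$ and $G$ vanishes at $0$, the integrability condition \eqref{integral} collapses to
\[
\iint_Q GS(\vark\delta_0)\,d\g\,dt=\vark^p\iint_Q r^\beta\big(S(t)\delta_0\big)^p\,h^2dx\,dt<\infty.
\]
Here $S(t)\delta_0(x)=k(t,x,0)$, and inserting \eqref{hkb} with $y=0$ (so the $y$-factor is $1$) gives $S(t)\delta_0(x)\le c\,t^{-\frac{N+2\lambda}2}e^{-c'|x|^2/t}\big(|x|/\sqrt t+1\big)^{-\lambda}$. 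Rescaling $x=\sqrt t\,\xi$ turns the spatial integral into a fixed finite constant --- convergent at the origin because $\beta+2\lambda+N>0$ (as $\beta>-2$ and $N+2\lambda>2$) and at infinity because of the Gaussian --- while the time integral becomes a constant multiple of $\int_0^T t^{\frac12[\beta+(N+2\lambda)(1-p)]}\,dt$. The latter converges near $t=0$ exactly when $(N+2\lambda)(p-1)<2+\beta$, i.e. when $p<1+\frac{2+\beta}{N+2\lambda}$, which is precisely the standing hypothesis. Thus \eqref{integral} holds, Theorem~\ref{exist-uniq} yields the unique solution $u_\vark$, and its concluding assertion gives $u_\vark(t)-\vark S(t)\delta_0\to0$ in $L^1$, i.e. assertion (e).

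For the converse, let $u$ solve \eqref{wparabolic} with $u(t)\to\vark\delta_0$ weak-$*$ (which in particular gives \eqref{null}, since test functions in $C_c(\R^N\setminus\{0\})$ vanish at $0$). As $1<p<1+\frac{2+\beta}{N+2\lambda}$, Corollary~\ref{subcr-est} applies and yields both $u\in L^2_{loc}\big((0,T);H^1_{h^2}\big)=L^2_{loc}\big((0,T);\mathcal F\big)$ and the domination $u(x,t)\le\vark\,p^{h^2}_t(x,0)=\vark\,S(t)\delta_0(x)$. From $\int p_t(x,0)\g(dx)\le1$ this gives $\|u(t)\|_{L^1}\le\vark$, so $u\in L^1(Q)$, while $Gu=r^\beta u^p\le\vark^p r^\beta\big(S(t)\delta_0\big)^p\in L^1(Q)$ by the computation above. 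Hence $u$ carries exactly the regularity demanded in Definition~\ref{wsolution}, and it remains to identify $u$ as a weak solution of the Cauchy problem \eqref{cauchy} with datum $\mu=\vark\delta_0$: the identity \eqref{wequ} follows from the weak formulation of \eqref{wparabolic} after extending the admissible test functions to the class $W$ by density, and the initial trace \eqref{winit} follows from the hypothesised weak-$*$ convergence combined with the uniform integrability provided by $u\le\vark S(t)\delta_0$. Once $u$ is recognised as a solution of \eqref{cauchy}, the uniqueness in Theorem~\ref{exist-uniq} (equivalently Corollary~\ref{uniq&bounds}) forces $u=u_\vark$.

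I expect the main obstacle to lie in this last identification: reconciling the weak-solution notion attached to \eqref{wparabolic} with Definition~\ref{wsolution}, and in particular upgrading the mere weak-$*$ statement $u(t)h^2dx\to\vark\delta_0$ to the tested initial condition \eqref{winit} for every $\zeta\in W$. The heat-kernel domination from Corollary~\ref{subcr-est} is exactly the tool that makes this passage legitimate, so the whole converse hinges on having the subcritical a-priori estimate in hand before the uniqueness step; the finiteness of $\iint_Q r^\beta(S(t)\delta_0)^p h^2dx\,dt$ and the attendant appearance of the threshold $1+\frac{2+\beta}{N+2\lambda}$ form the computational heart shared by both parts.
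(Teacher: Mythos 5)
Your proposal is correct and follows essentially the same route as the paper: existence and uniqueness via Theorem~\ref{exist-uniq} with $\mu=\vark\delta_0$, verifying \eqref{integral} through the heat-kernel bound \eqref{hkb} and the rescaling $x=\sqrt t\,\xi$ (yielding the same exponent $\frac{\beta-(p-1)(N+2\lambda)}2>-1$ and the same convergence condition $\beta+2\lambda+N>0$ for the spatial integral), and the converse via the domination and regularity of Corollary~\ref{subcr-est} combined with the uniqueness of Corollary~\ref{uniq&bounds}. The only difference is that you spell out the reconciliation of the weak-solution notion for \eqref{wparabolic} with Definition~\ref{wsolution} and the initial trace \eqref{winit}, details the paper leaves implicit in its one-line conclusion.
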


\begin{proof}
In this case $\mathcal{E}(u)=\|\nabla u\|^2_{L^2_{h^2}}$ is the
bilinear quadratic form in $L^2_{h^2}$ with 
$C^1_c(\mathbb{R}^N)$ as its core. (It follows from Lemma
\ref{hardy} that $\Big(\mathcal{E}, C^1_c(\mathbb{R}^N)\Big)$
is closable in $L^2_{h^2}$.) Let $S$ denote the corresponding
semigroup and $k$ its integral kernel. By Theorem~\ref{hk}, $k$ obeys  the estimate \eqref{hkb}.
Now we verify the assumption of Theorem \ref{exist-uniq}:
\[
\begin{split}
\int\limits_0^T\int\limits_{\mathbb{R}^N}r^\beta|S(t)\vark\delta_0|^p & h^2dxdt
=  \vark^p \int\limits_0^T\int\limits_{\mathbb{R}^N}|x|^{\beta + 2\lambda}|k(t,x,0)|^pdxdt
\\
\le & \int\limits_0^T\int\limits_{\mathbb{R}^N}|x|^{\beta + 2\lambda}
c_\delta t^{-\frac {p(N+2\lambda)}2}e^{-\frac{p|x|^2}{4(1+\delta)t}}\left( \frac{|x|}{\sqrt t}
+ 1\right)^{-p\lambda} dxdt
\\
\le & c_{\delta,p}\int\limits_0^Tt^{\frac{\beta - (p-1)(N + 2\lambda)}2}dt
\int\limits_{\mathbb{R}^N}|\xi|^{\beta + 2\lambda}e^{-\frac{p|\xi|^2}{4(1+\delta)}}\left( |\xi|
+1\right)^{-p\lambda}d\xi.
\end{split}
\]
The integral in $t$ converges since $\frac{\beta + 2\lambda -
(p-1)N - 2p\lambda}2>-1$, that is, $p<1 +
\frac{2+\beta}{N+2\lambda}$. The integral in $\xi$ converges
since $\beta + 2\lambda + N= (\beta+2) + (2\lambda + N -2)>0$.

The second assertion follows from Corollaries~\ref{subcr-est}
and~\ref{uniq&bounds}
\end{proof}

\section{Very singular solutions}

In this section we construct a very singular solution to \eqref{wparabolic} and prove its uniqueness.
Throughout the section we assume that
\[
1<p<p^*=1+\frac{2+\b}{N+2\l}.
\]

We start this section by showing that every very singular solution (VSS) if it exists, dominates pointwise every source type solution (SS).
The next proposition is an analogue  of \cite[Lemma~1.3]{kpv89}.

\begin{proposition}
Let $v$ be a VSS  and $u$ be a SS to \eqref{wparabolic},
respectively. Then $u\le v$ pointwise for a.a. $(x,t)\in \R^N\times(0,T)$.
\end{proposition}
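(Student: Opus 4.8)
The plan is to reduce the statement to a comparison between the source solution $u_\vark$ carrying the (necessarily finite) mass of $u$ and a family of Cauchy solutions sitting below time-shifts of $v$. First I would record that, by Proposition~\ref{classify}, the source solution $u$ has a well-defined finite mass $\vark:=\lim_{t\to0}\int_{B_\rho}u(t)r^{2\l}dx\in(0,\infty)$, and that by the second assertion of Corollary~\ref{wss} (together with the Remark after Proposition~\ref{classify}, which turns finiteness of the mass into $u(t)h^2dx\to\vark\delta_0$) one has $u=u_\vark$. Since $1<p<p^*$, the Keller--Osserman bound of Theorem~\ref{main1}(a) (Proposition~\ref{local-quanti}) forces $v(s)\in L^1_{h^2}$ for every $s>0$: near the origin $v(s)$ is bounded, while its decay $v(s,x)\le c|x|^{-\frac{2+\b}{p-1}}$ is integrable against $r^{2\l}dx$ at infinity precisely because $\frac{2+\b}{p-1}>N+2\l$ in the subcritical range. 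Consequently the time-shift $v^s(x,t):=v(x,t+s)$ is a weak solution of \eqref{wparabolic} on $\R^N\times(0,\infty)$, lies in $L^1(Q)$, and by uniqueness (Corollary~\ref{uniq&bounds}) is the Cauchy solution with datum $v(s)\,h^2dx$.

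The core of the argument is an approximation exploiting the defining blow-up of a VSS. Fix $\vark$ as above. For each $n$, apply the VSS property $\int_{B_{1/n}}v(t)r^{2\l}dx\to\infty$ as $t\to0$ (Definition~\ref{ss-vss}, the limit being $\rho$-independent by Proposition~\ref{classify}) to pick $\tau_n\in(0,1/n)$ with $m_n:=\int_{B_{1/n}}v(\tau_n)r^{2\l}dx>\vark$, so that $\tau_n\to0$. Then set
\[
\mu_n:=\frac{\vark}{m_n}\,v(\tau_n)\,\ind_{B_{1/n}}\,h^2dx,
\]
a nonnegative measure of total mass exactly $\vark$, supported in $B_{1/n}$, and satisfying $\mu_n\le v(\tau_n)h^2dx$. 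Because $\operatorname{supp}\mu_n\subset B_{1/n}$ shrinks to $\{0\}$ while the masses stay equal to $\vark$, one checks directly that $\mu_n\to\vark\delta_0$ in the weak-$*$ topology of $\mathcal M(\R^N)$.

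Next I would let $w_n$ be the Cauchy solution of \eqref{wparabolic} with datum $\mu_n$, which exists by Theorem~\ref{exist-uniq} since subcriticality makes \eqref{integral} hold for the finite measure $\mu_n$ supported in $B_1$ (the same heat-kernel computation as in the proof of Corollary~\ref{wss}). Two facts then combine. On one hand, $\mu_n\le v(\tau_n)h^2dx$ and the common monotone homogeneous nonlinearity $r^\b|u|^{p-1}u$ let Proposition~\ref{monotone} give $w_n\le v^{\tau_n}$, that is $w_n(x,t)\le v(x,t+\tau_n)$ a.e. On the other hand, since $\mu_n\ge0$, $\operatorname{supp}\mu_n\subset B_1$, $\|\mu_n\|=\vark$ and $\mu_n\to\vark\delta_0$ weak-$*$, Corollary~\ref{cor5.4} (whose hypothesis \eqref{iiintegral} is again the subcritical integrability verified in Corollary~\ref{wss}) yields $w_n\to u_\vark$ in $L^1(Q)$. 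To conclude I would test the inequality $w_n\le v(\cdot,\cdot+\tau_n)$ against an arbitrary $0\le\phi\in C_c(\R^N\times(0,T))$: the left-hand side tends to $\iint u_\vark\phi$ by the $L^1$-convergence of $w_n$, and, after the substitution $s=t+\tau_n$, the right-hand side tends to $\iint v\phi$ by continuity of translations in $L^1_{loc}$ (using local integrability of $v$ for $t>0$ from Lemma~\ref{local-quali}). Hence $u_\vark\le v$ a.e., as desired.

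I expect the main obstacle to be precisely the mismatch of initial data: one cannot compare the point mass $\vark\delta_0$ with the absolutely continuous datum $v(\tau_n)h^2dx$ directly, since $\vark\delta_0\not\le v(\tau_n)h^2dx$. The whole scheme is designed to circumvent this by sliding down to dominated measures $\mu_n$ of fixed mass $\vark$ that still concentrate at $0$; the technical points needing care are the exact-mass normalization that keeps $\mu_n$ below $v(\tau_n)h^2dx$, the verification of the existence and integrability hypotheses \eqref{integral} and \eqref{iiintegral} in the subcritical range, and the passage to the limit in $n$, where one must avoid claiming pointwise convergence of $v$ and instead argue weakly against test functions.
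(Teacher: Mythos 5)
Your proposal is correct and follows essentially the same route as the paper: dominate a measure of mass exactly $\vark$ by $v(\tau)$ restricted to a ball, solve the Cauchy problem with that datum, compare via Proposition~\ref{monotone} to get a bound by the time-shift of $v$, and pass to the limit $\tau\to0$ using Corollary~\ref{cor5.4} to recover $u_\vark\le v$. The only cosmetic differences are your use of shrinking balls $B_{1/n}$ to force the weak-$*$ convergence of the initial data (the paper keeps $B_1$ and relies on $v(\tau)\to0$ away from the origin) and your more explicit final limit passage against test functions.
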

\begin{proof}
Let $\int_{B_1}u(t)h^2dx\to \vark<\infty$ as $t\to 0$. Let
$\tau_0>0$ be such that $\int_{B_1}v(t)h^2dx>\vark$ for all $0<t\le
\tau_0$. Then, for $\tau\in (0,\tau_0)$ there exists
$\var_\tau \in L^1_{h^2}$ such that $0\le \var_\tau\le
v(\tau)\ind_{B_1}$ and $\|\var_\tau\|_{L^1_{h^2}}=\vark$.

Let $u^{(\tau)}$ be the solution to the problem
\[
(\partial_t-\Delta_{h^2})u+r^\b u^p=0, \quad u(0)=\var_\tau.
\]
Thanks to Proposition~\ref{local-quanti} it is easy to check that
\[
v\in L^1(\R^N\times(\tau,t), h^2dxdt)\cap L^p(\R^N\times(\tau,t), r^\b h^2dxdt).
\]
Then by Proposition~\ref{monotone}
\begin{equation}
\label{utau}
u^{(\tau)}(t)\le v(t+\tau), \ \ t>0.
\end{equation}

Since $\|u^{(\tau)}(0)\|_{L^1_{h^2}}=\vark$ and ${\rm supp}\,u^{(\tau)}(0)\subset {\rm supp}\, v(\tau)$, it follows that
$u^{(\tau)}(0)h^2dx \to \vark \delta_0$ in weak-$*$ topology  of ${\cal M}(\Omega)$.
Hence $u^{(\tau)} \to u_\vark$ in $L^1_{h^2}(Q)$ by Corollary~\ref{cor5.4}, where \eqref{hkb} is used to verify \eqref{iiintegral}. Then \eqref{utau}
implies that $u_\vark\le v$.
\end{proof}

The above leads to an immediate construction of the minimal VSS.

\begin{corollary}
$u_\infty:=\lim\limits_{\vark\to\infty} u_\vark$ is the minimal VSS, where $u_\vark$ is the solution from Corollary~\ref{wss}.
\end{corollary}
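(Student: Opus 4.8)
The plan is to establish the four facts implicit in the statement: (i) the limit $u_\infty=\lim_{\vark\to\infty}u_\vark$ exists and is finite; (ii) $u_\infty$ is a nontrivial positive solution of \eqref{wparabolic} satisfying \eqref{null}; (iii) its initial mass is infinite, so it is a VSS rather than an SS; and (iv) it is dominated by every VSS. For (i) I would first note that $\vark_1\le\vark_2$ gives $\vark_1\delta_0\le\vark_2\delta_0$, so Proposition~\ref{monotone} yields $u_{\vark_1}\le u_{\vark_2}$ pointwise; hence $(u_\vark)_\vark$ is nondecreasing and the pointwise limit $u_\infty=\sup_\vark u_\vark\in[0,+\infty]$ exists. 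Finiteness is exactly the Keller--Osserman estimate of Proposition~\ref{local-quanti} (applied with $u_0=0$, since each $u_\vark$ satisfies \eqref{null}), whose constant is independent of $\vark$: every $u_\vark$ obeys $u_\vark(x,t)\le c(|x|^2+t)^{-\frac{2+\beta}{2(p-1)}}$, so $u_\infty$ satisfies the same bound and is therefore finite and locally bounded away from $(0,0)$, and $u_\infty\ge u_1>0$.

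For the solution property in (ii) I would pass to the limit in the weak identity. On every slab $[t_0,t_1]\times K$ with $t_0>0$ the uniform Keller--Osserman bound is bounded, so the terms involving $u_\vark$ without a gradient, and the absorption term $\iint r^\beta u_\vark^{p}\zeta h^2$, converge by dominated convergence. For the gradient term I would use the uniform local energy bound \eqref{int-KO} (again with $u_0=0$), which makes $(\nabla u_\vark)_\vark$ bounded, hence weakly convergent to $\nabla u_\infty$, in $L^2_{loc}(h^2dx\,dt)$; thus $\iint\nabla u_\vark\cdot\nabla\zeta\,h^2\to\iint\nabla u_\infty\cdot\nabla\zeta\,h^2$. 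This shows $u_\infty$ is a weak solution of \eqref{wparabolic} lying in the required spaces.

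The verification of \eqref{null} for $u_\infty$ is the main obstacle, because the convergence $u_\vark\uparrow u_\infty$ is not uniform in $\vark$ as $t\to0$, so \eqref{null} cannot simply be inherited. I would instead use a barrier argument that is uniform in $\vark$. Fix $\theta\in C_c(\R^N\setminus\{0\})$ with support in an annulus $A\Subset\R^N\setminus\{0\}$, and cover $A$ by finitely many balls $B_\sigma(x_0)$ with $\overline{B_\sigma(x_0)}\Subset\R^N\setminus\{0\}$. On each cylinder $B_\sigma(x_0)\times(0,T)$ compare $u_\vark$ with the solution $W$ of the linear equation \eqref{linear} having zero initial datum and lateral datum equal to the $\vark$-independent constant $M:=\sup_{\overline{B_\sigma(x_0)}\times(0,T)}c(|x|^2+t)^{-\frac{2+\beta}{2(p-1)}}$. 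Since the absorption term is nonnegative, each $u_\vark$ is a subsolution of \eqref{linear}, with $u_\vark(\cdot,t)\to0$ uniformly on $\overline{B_\sigma(x_0)}$ as $t\to0$ by Lemma~\ref{local-quali}; the maximum principle then gives $u_\vark\le W$, hence $u_\infty\le W$. As $W(x,t)\to0$ uniformly on $\overline{B_{\sigma/2}(x_0)}$ when $t\to0$, a finite covering yields $u_\infty(x,t)\to0$ uniformly on $A$, which establishes \eqref{null}.

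Finally, for (iii), since $u_\infty\ge u_{\vark_0}$ for every finite $\vark_0$, Proposition~\ref{classify} guarantees that $\lim_{t\to0}\int_{B_\rho}u_\infty(t)r^{2\lambda}dx$ exists in $[0,+\infty]$, and $\liminf_{t\to0}\int_{B_\rho}u_\infty(t)r^{2\lambda}dx\ge\lim_{t\to0}\int_{B_\rho}u_{\vark_0}(t)r^{2\lambda}dx=\vark_0$ for all $\vark_0$ forces this limit to be $+\infty$; thus $u_\infty$ is a VSS in the sense of Definition~\ref{ss-vss}. Minimality (iv) is then immediate from the preceding proposition: any VSS $v$ dominates every SS, so $v\ge u_\vark$ for all finite $\vark$, and letting $\vark\to\infty$ gives $v\ge u_\infty$. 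Hence $u_\infty$ is the minimal VSS.
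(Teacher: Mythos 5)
Your argument is correct and follows essentially the same route as the paper, which simply invokes Proposition~\ref{local-quanti} (the uniform Keller--Osserman and local energy bounds) to justify existence of the monotone limit and passage to the limit in the weak formulation, with minimality being immediate from the preceding proposition that every VSS dominates every SS. Your write-up merely supplies the details (monotonicity via Proposition~\ref{monotone}, the barrier argument for \eqref{null}, and Proposition~\ref{classify} for the infinite initial mass) that the paper leaves to the reader.
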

\begin{proof}
Using Proposition~\ref{local-quanti} one can easily verify that the above limit exists and is a solution to \eqref{wparabolic}, \eqref{null}.
\end{proof}

In the next proposition we follow the construction from \cite[Theorem~4.1]{kv92}.
\begin{proposition}
$U_\infty(x,t):=\sup\{u(x,t)\,:\,u\ \ \text{is a positive singular
solution}\}$ is the maximal VSS.
\end{proposition}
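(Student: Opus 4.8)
The plan is to realise $U_\infty$ as the maximal element of the larger family $\mathcal S$ of all \emph{positive subsolutions} of \eqref{wparabolic} that satisfy \eqref{null}, and then to show that this maximal subsolution is automatically a genuine solution. The first point is that $\mathcal S$ is uniformly controlled: since Proposition~\ref{local-quanti} is stated for subsolutions (with $u_0=0$), every $w\in\mathcal S$ obeys the Keller--Osserman bound $w(x,t)\le \K(x,t):=c(|x|^2+t)^{-\frac{2+\b}{2(p-1)}}$ with $c$ independent of $w$. Hence $U_\infty\le\sup\mathcal S\le\K<\infty$ off $(0,0)$, so $U_\infty$ is locally bounded on $\R^N\times(0,\infty)$. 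Moreover the uniform boundary control provided by $\K$ makes the comparison argument of Lemma~\ref{local-quali} uniform over $\mathcal S$: $w(x,t)\to0$ as $t\to0$ with a modulus independent of $w$, locally uniformly in $x\neq0$. Consequently $\sup\mathcal S$ itself satisfies \eqref{null}.

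Next I would show that $V:=\sup\mathcal S$ is a subsolution. Using continuity of the elements of $\mathcal S$ (Lemma~\ref{local-quali}) together with their uniform bound on compacta, a countable subfamily suffices; replacing it by the running maxima $v_n=\max(w_1,\dots,w_n)$, each a subsolution since the maximum of finitely many subsolutions is a subsolution, produces an increasing sequence $v_n\uparrow V$. The energy estimate \eqref{int-KO} bounds $\iint_{D}(|\nabla v_n|^2+r^\b v_n^{p+1})h^2\,dx\,dt$ uniformly in $n$ on every $D\Subset\R^N\times(0,\infty)$. Monotone and dominated convergence then give $v_n\to V$ in $L^{p+1}_{loc}(r^\b h^2)$ and weakly in $L^2_{loc}((0,\infty);H^1_{loc}(h^2dx))$, which suffices to pass to the limit in the weak inequality defining a subsolution, the nonlinear term converging because $v_n^p\to V^p$ in $L^{(p+1)/p}_{loc}$. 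Thus $V\in\mathcal S$ and $V=\max\mathcal S$.

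The decisive step is to upgrade $V$ from subsolution to solution, and here I would use a local Dirichlet lifting. Fix a parabolic cylinder $Q=B_R(x_0)\times(a,b)$ with $\overline Q\subset(\R^N\setminus\{0\})\times(0,\infty)$, where the weights $h^2,r^\b$ are smooth and bounded, and let $\widehat V$ solve \eqref{wparabolic} in $Q$ with $\widehat V=V$ on the parabolic boundary $\partial_pQ$, extended by $V$ outside $Q$. Since $V$ is a subsolution, comparison for the non-degenerate equation in $Q$ (cf.\ Proposition~\ref{monotone}) gives $\widehat V\ge V$ in $Q$; the glued function is again a nonnegative subsolution on all of $\R^N\times(0,\infty)$, and it still satisfies \eqref{null} because $Q$ is bounded away from $t=0$, so $\widehat V=V$ for small $t$. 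Hence $\widehat V\in\mathcal S$, so $\widehat V\le\max\mathcal S=V$; combined with $\widehat V\ge V$ this forces $\widehat V=V$, i.e.\ $V$ solves \eqref{wparabolic} in every such $Q$. As these cylinders exhaust $(\R^N\setminus\{0\})\times(0,\infty)$ and $V$ already satisfies \eqref{null}, $V$ is a solution of \eqref{wparabolic} satisfying \eqref{null}. Finally, every positive singular solution lies in $\mathcal S$, so $U_\infty\le V$; and $V$ is itself such a solution, so $U_\infty\ge V$. Therefore $U_\infty=V$ is a solution and is maximal among positive singular solutions. That it is a VSS follows by domination: the minimal VSS $u_\infty$ of the preceding corollary belongs to $\mathcal S$, whence $U_\infty\ge u_\infty$ and $\int_{B_\rho}U_\infty(t)r^{2\l}dx\to\infty$ as $t\to0$. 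I expect the lifting/gluing step to be the main obstacle, since it requires local well-posedness and comparison for the Dirichlet problem for the weighted Laplacian away from the origin, together with the verification that inserting a solution into a subsolution on $Q$ preserves the subsolution property globally.
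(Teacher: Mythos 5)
Your strategy is a Perron-type construction (maximal subsolution, then local Dirichlet lifting to a solution), which is genuinely different from the paper's argument. The paper works directly with $U_\infty$: it first shows that every positive singular solution, hence $U_\infty$ itself, is dominated by the solution of a linear exterior problem (giving decay as $t\to0$ away from the origin) and satisfies the Keller--Osserman bound; it then solves, for each $\tau>0$, the Cauchy problem on all of $\R^N$ with initial datum $U_\infty(\tau)$ posed at time $\tau$ via Theorem~\ref{exist-uniq}, uses Proposition~\ref{monotone} to get $u^{(\tau)}\ge U_\infty$ for $t\ge\tau$ together with monotonicity in $\tau$, and passes to the limit $\tau\downarrow0$ using the energy bounds of Proposition~\ref{local-quanti}; the limit is a singular solution squeezed between $U_\infty$ and $U_\infty$, hence equal to it. That route stays entirely inside the machinery the paper has built (whole-space Cauchy problems with $L^1$ or measure data, and the comparison principle of Proposition~\ref{monotone}), whereas yours requires new ingredients.

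Two of those new ingredients are genuine gaps. First, the lifting step needs existence and comparison for the semilinear \emph{Dirichlet} problem on a cylinder $Q=B_R(x_0)\times(a,b)$ with boundary datum $V$ on the parabolic boundary. Nothing in the paper provides this: Theorem~\ref{exist-uniq} and Proposition~\ref{monotone} concern Cauchy problems driven by the whole-space semigroup, and $V=\sup\mathcal S$ is a priori only lower semicontinuous, since Lemma~\ref{local-quali} gives $C^{2,1}$ regularity for solutions but only local boundedness for subsolutions; so even the meaning of the boundary condition and the attainment of $\widehat V\ge V$ up to $\partial_p Q$ require barriers or an upper semicontinuous regularization, and the related claim that a countable subfamily realizes the pointwise supremum rests on a continuity you have not established. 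Second, your cylinders must satisfy $\overline Q\subset(\R^N\setminus\{0\})\times(0,\infty)$, so the lifting only shows that $V$ solves \eqref{wparabolic} away from the spatial origin; to conclude that $V$ is a weak solution in the paper's sense (test functions supported near $x=0$ are admissible, and the weights $r^{2\l}$, $r^\b$ are singular there) you still need a removable-singularity argument across the line $\{x=0,\ t>0\}$ --- plausible, since $V$ is bounded there by Keller--Osserman and points have zero capacity for the weighted form when $N+2\l>2$, but it is an additional step that must be supplied. Until these two points are filled in the proof is incomplete; the paper's forward-solving argument circumvents both.
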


\begin{proof}
Let $u$ be a solution for \eqref{wparabolic}, \eqref{null}.
It follows from Lemma~\ref{local-quali} and Proposition~\ref{local-quanti} that, for all $R>0$ one has $u\in C^{2,1}(\mathbb{R}^N\setminus B_R\times [0,T))$ and
$u(x,t)\to0$ as $|x|\to\infty$ uniformly in $t$. Moreover, by Proposition~\ref{local-quanti}
\begin{equation}\label{KO}
    u(x,t)\le {c}{(|x|^2+t)^{-\frac{2+\b}{2(p-1)}}}, \quad (x,t)\in {\mathbb R}^N\times (0,\infty),
\end{equation}
with a constant $c>0$ independent of $u$. Let $v$  be the solution of the linear inhomogeneous problem
\[
\begin{cases}
(\partial_t - \Delta_{h^2})v=0 & \mbox{ in } \mathbb{R}^N\setminus B_R \times (0,\infty),
\\
v(x,0)=0,& x\in \mathbb{R}^N\setminus B_R,
\\
v(x,t)= cR^{-\frac{2+\beta}{p-1}}, & x\in \partial B_R, t>0.
\end{cases}
\]
Then, by the maximum principle, $u\le v$.
Note that $v$ enjoys the estimate

\begin{equation}\label{kern-decay}
    0\le v(x,t)\le C_R\int\limits_0^t\int\limits_{[R<|x|<2R]}p_s^{h^2}(x,y)dy\,ds,
\end{equation}
where $C_R>0$ is a constant and $p^{h^2}_t(\cdot,\cdot)$ is the fundamental solution to the
linear equation $(\partial_t - \Delta_{h^2})u=0$.
Hence all $u$ and $U_\infty$ satisfy \eqref{kern-decay} with $v$ replaced by $u$ and $U_\infty$, respectively.
Note also that $U_\infty$ satisfies the estimate \eqref{KO} with $u$ replaced by $U_\infty$.
In particular, $U_\infty(\tau)\in L^1_{h^2}$ and $U_\infty \in L^1(\R^N\times(\tau,T),\,h^2dxdt)\cap L^p(\R^N\times(\tau,T),\,r^\b h^2dxdt)$
for all $\tau>0$.

By Theorem~\ref{exist-uniq} for $t>\tau$ the problem
\begin{equation*}
\left\{
\begin{array}{c}
(\partial_t-\Delta_{h^2})u+r^\b u^p=0, \quad t>\tau,
\\
u(\tau)=U_\infty(\tau)
\end{array}
\right.
\end{equation*}
has a unique solution $u^{(\tau)}$.

For every singular solution $u$ we have that $u^{(\tau)}(\tau)\ge
u(\tau)$. Therefore by Proposition~\ref{monotone} $u^{(\tau)}(t)\ge
u(t)$ , and hence $u^{(\tau)}(t)\ge
U_\infty(t)$ for all $t\ge \tau$. Moreover, for $\tau'\le \tau$ one has
\[
u^{(\tau')}(\tau)\ge U_\infty(\tau)=u^{(\tau)}(\tau).
\]
Using Proposition~\ref{monotone} again we obtain that
\[
u^{(\tau')}(t)\ge u^{(\tau)}(t), \quad \tau'\le \tau\le t.
\]
By Proposition~\ref{local-quanti} it follows that, for all $t_0>0$
and $\tau<\frac{t_0}2$, with $\rho:=\sqrt{t_0-\tau}\ge \sqrt{\frac{t_0}2}$
\[
\iint\limits_{t>t_0}|\nabla u^{(\tau)}|^2 h^2dx\,dt\le c\left(\int\limits_{|x|>\rho}U_\infty(x,\tau)^2h^2dx + \rho^{N+2\lambda -2\frac{2+\beta}{p-1}}\right)
\le c t_0^{\frac N2+\lambda -\frac{2+\beta}{p-1}}.
\]
So $(\nabla u^{(\tau)})_\tau$ is
bounded in $L^2_{loc}({\mathbb R}^N\times (0,\infty),h^2dx\,dt)$
uniformly in $\tau$. Hence $u_\tau\uparrow u$ as $\tau
\downarrow 0$. Now passing to the limit in $\tau$ it is easy
to see that
\[
(\partial_t-\Delta_{h^2})u+r^\b u^p=0.
\]
Furthermore, by \eqref{kern-decay} for $x\not=0$ we have that
$u(x,t)\to 0$ as $t\to 0$. Thus $u$ is a singular solution, and
hence $u\le U_\infty$. Since $u\ge u_\tau\ge U_\infty$, we
conclude that $u=U_\infty$.
\end{proof}

\begin{lemma}
The minimal VSS $u_\infty$ and and the maximal VSS $U_\infty$ to \eqref{wparabolic} are self-similar.
More precisely,
\[
u_\infty(x,t)=t^{-\sigma} v_\infty(\frac{x}{\sqrt{t}}), \quad U_\infty(x,t)=t^{-\sigma} V_\infty(\frac{x}{\sqrt{t}})
\]
where $v_\infty$ and $V_\infty$ are positive solutions to the
problem
\begin{equation}\label{self-sim}
    \begin{cases}
-K^{-1}\n (K\n v)-\sigma v +r^\b v|v|^{p-1}=0, \\
r^{2\sigma}v\to 0\ \ \text{as}\ r\to\infty,
    \end{cases}
\end{equation}
with $\sigma=\frac{\b+2}{2(p-1)}$ and
$K=r^{2\l}e^{\frac{r^2}{4}}$.
\end{lemma}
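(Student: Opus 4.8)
The plan is to exploit the scaling invariance of equation \eqref{wparabolic}. Set $\sigma=\frac{\b+2}{2(p-1)}$ and, for $\ell>0$, associate to a function $u$ the rescaled function $u_\ell(x,t):=\ell^{2\sigma}u(\ell x,\ell^2 t)$. First I would verify that $u\mapsto u_\ell$ sends solutions of \eqref{wparabolic} to solutions. Since $h=r^\l$ is homogeneous, the weighted Laplacian satisfies $\Delta_{h^2}\big[u(\ell\cdot,\ell^2\cdot)\big](x,t)=\ell^2(\Delta_{h^2}u)(\ell x,\ell^2 t)$, so $\partial_t u_\ell$ and $\Delta_{h^2}u_\ell$ both carry the factor $\ell^{2\sigma+2}$; the absorption term $r^\b|u_\ell|^{p-1}u_\ell$ carries $\ell^{2\sigma p+\b}$, and these exponents agree precisely because $2\sigma(p-1)=\b+2$. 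Hence $\ell^{2\sigma+2}\big[\partial_tu-\Delta_{h^2}u+r^\b|u|^{p-1}u\big](\ell x,\ell^2 t)=0$, so $u_\ell$ solves \eqref{wparabolic} whenever $u$ does.

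Next I would check that the scaling preserves the class of very singular solutions. The null trace \eqref{null} away from the origin is clearly inherited by $u_\ell$. For the mass, the substitution $y=\ell x$ gives
\[
\int_{B_\rho}u_\ell(t)r^{2\l}dx=\ell^{2\sigma-N-2\l}\int_{B_{\ell\rho}}u(y,\ell^2 t)|y|^{2\l}dy,
\]
and as $t\to0$ the right-hand side tends to $+\infty$ exactly when $u$ is a VSS. Thus $u\mapsto u_\ell$ is an order-preserving bijection of the set of VSS onto itself, with inverse $u\mapsto u_{1/\ell}$.

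Then I would invoke the uniqueness of the extremal VSS established above. Since $u_\infty$ is the minimal VSS, $(u_\infty)_\ell$ is again a VSS and so $u_\infty\le (u_\infty)_\ell$; applying this with $1/\ell$ in place of $\ell$ and rescaling by $\ell$ yields the reverse inequality, whence $(u_\infty)_\ell=u_\infty$ for all $\ell>0$. The same argument, with the inequalities reversed, shows $(U_\infty)_\ell=U_\infty$. Taking $\ell=t^{-1/2}$ in the invariance $u_\infty(x,t)=\ell^{2\sigma}u_\infty(\ell x,\ell^2 t)$ gives $u_\infty(x,t)=t^{-\sigma}v_\infty(x/\sqrt t)$ with $v_\infty:=u_\infty(\cdot,1)$, and likewise $U_\infty(x,t)=t^{-\sigma}V_\infty(x/\sqrt t)$.

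Finally, substituting the self-similar ansatz into \eqref{wparabolic} with $\xi=x/\sqrt t$, using $\partial_t u=t^{-\sigma-1}\big(-\sigma v-\frac12\xi\cdot\n v\big)$, $\Delta_{h^2}u=t^{-\sigma-1}\Delta_{h^2}v(\xi)$ and $r^\b|u|^{p-1}u=t^{-\sigma-1}|\xi|^\b|v|^{p-1}v$, and dividing by $t^{-\sigma-1}$, I obtain
\[
-\Delta_{h^2}v-\tfrac12\xi\cdot\n v-\sigma v+r^\b v|v|^{p-1}=0.
\]
Since $\n\log K=\frac{2\l}{r^2}x+\frac12 x$ for $K=r^{2\l}e^{r^2/4}$, one has $K^{-1}\n(K\n v)=\Delta_{h^2}v+\frac12\xi\cdot\n v$, so this is exactly the first line of \eqref{self-sim}; the decay $r^{2\sigma}v\to0$ follows from the kernel bound \eqref{kern-decay} together with \eqref{hkb}, which force $v_\infty,V_\infty$ to decay like $e^{-c|\xi|^2}$. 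The main obstacle is the middle step: establishing that rescaling genuinely permutes the VSS class while preserving the order, so that the extremality of $u_\infty$ and $U_\infty$ forces their scaling invariance. Once invariance is secured, the profile equation is a routine substitution.
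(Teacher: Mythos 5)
Your proof is correct and follows essentially the same route as the paper: both exploit the scaling $u\mapsto \ell^{2\sigma}u(\ell\,\cdot,\ell^2\,\cdot)$ (the paper's $T_\rho$) and deduce that the extremal solutions are fixed by it, then set $\ell=t^{-1/2}$. The paper justifies the invariance via the identity $T_\rho u_c=u_{c_\rho}$ for source solutions together with the limit/supremum characterizations of $u_\infty$ and $U_\infty$, whereas you argue directly from minimality/maximality of the VSS under an order-preserving bijection of the VSS class; both work, and your explicit derivation of the profile equation and of the decay condition fills in details the paper leaves implicit.
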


\begin{proof}
Let $u$ be a singular solution to \eqref{wparabolic}. Then
$T_\rho u$ defined by $T_\rho
u(x,t):=\rho^{\frac{\b+2}{p-1}}u(\rho x, \rho^2 t)$, is another
singular solution to \eqref{wparabolic}. Moreover, $T_\rho
u_c=u_{c_\rho}$ with $c_\rho =c \rho^{\frac{\b+2}{p-1}-N-2\l}$.
Hence by definition $T_\rho U_\infty=U_\infty$ and  $T_\rho
u_\infty=u_\infty$. Now the assertion follows with
$\rho=t^{-\frac12}$.
\end{proof}

\begin{proposition}
A VSS to \eqref{wparabolic} is unique, i.e. 
$U_\infty=u_\infty$.
\end{proposition}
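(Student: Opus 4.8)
The plan is to reduce the claim to a uniqueness statement for the self-similar profile and then to settle the latter by a cross-testing argument that exploits the superlinearity of the absorption term. Since $u_\infty$ is the minimal and $U_\infty$ the maximal VSS, we already have $u_\infty\le U_\infty$, and by the preceding lemma this is equivalent to $v_\infty\le V_\infty$, where $v_\infty,V_\infty$ are the corresponding positive profiles solving \eqref{self-sim} with the same $\sigma=\frac{\b+2}{2(p-1)}$ and $K=r^{2\l}e^{r^2/4}$. Both are strictly positive by the strong maximum principle, so it suffices to prove $v_\infty\equiv V_\infty$.

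First I would record that each profile is a genuine weak solution of $-K^{-1}\n(K\n v)-\sigma v+r^\b v^p=0$, i.e. for every admissible test function $\phi$,
\[
\int K\,\n v\cdot\n\phi - \sigma\int K v\phi + \int K r^\b v^p\phi = 0 .
\]
The key step is then to use $V_\infty$ as a test function in the weak formulation for $v_\infty$ and, symmetrically, $v_\infty$ in the one for $V_\infty$, and to subtract. The quadratic-form term $\int K\,\n v_\infty\cdot\n V_\infty$ and the linear term $\sigma\int K v_\infty V_\infty$ are symmetric in $(v_\infty,V_\infty)$ and cancel, leaving
\[
\int K r^\b\bigl(v_\infty^{p}V_\infty - V_\infty^{p}v_\infty\bigr)\,dx
=\int K r^\b\, v_\infty V_\infty\bigl(v_\infty^{p-1}-V_\infty^{p-1}\bigr)\,dx = 0 .
\]
Since $p>1$ and $0<v_\infty\le V_\infty$, the integrand is nonpositive while $K r^\b v_\infty V_\infty>0$ a.e.; hence $v_\infty^{p-1}=V_\infty^{p-1}$ a.e., i.e. $v_\infty\equiv V_\infty$, which yields $u_\infty=U_\infty$ and proves the proposition.

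The main obstacle is the rigorous justification of this cross-testing, namely that $v_\infty$ and $V_\infty$ belong to the energy space $H^1_K$, so that each is an admissible test function for the other, all integrals above converge, and there is no boundary contribution at infinity. The self-similar structure together with assertion (f) of Theorem~\ref{main1} supplies the Gaussian bound $v_\infty,V_\infty\le Ce^{-|x|^2/8}$; however, because of the factor $e^{r^2/4}$ in $K$ this bound is borderline for $\int K v^2$, so I would first upgrade it to a sharp decay of the form $v_\infty,V_\infty=O\!\left(r^{m}e^{-r^2/4}\right)$ for a suitable $m$, obtained by a standard barrier/comparison at infinity for the Ornstein--Uhlenbeck-type operator $-K^{-1}\n(K\n\cdot)-\sigma$; this guarantees $\int K(|\n v|^2+v^2)<\infty$ for both profiles and, likewise, for their gradients. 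Alternatively, one may carry out the subtraction against a radial cutoff $\chi_R$, reducing matters to showing that the flux $\int K\,(V_\infty\n v_\infty - v_\infty\n V_\infty)\cdot\n\chi_R$ tends to $0$ as $R\to\infty$, which again follows from the decay of the profiles and their gradients. Once admissibility is secured, the algebraic heart of the argument — the one-signed integrand produced by superlinearity — closes the proof at once.
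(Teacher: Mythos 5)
Your argument is correct in its core mechanism but takes a genuinely different route from the paper. The paper works with the difference $w=V_\infty-v_\infty$, observes that $w$ is a sub-solution of the \emph{linear} equation $-K^{-1}\n(K\n w)+(r^\b v_\infty^{p-1}-\sigma)w=0$, invokes the Allegretto--Piepenbrink-type fact (via Davies) that the existence of the positive solution $v_\infty$ forces the associated quadratic form to be nonnegative, and then runs a weak maximum principle on $B_R$ using only the pointwise decay $r^{2\sigma}V_\infty\to0$ to conclude $w\le R^{-2\sigma}$ everywhere, whence $w\le0$ as $R\to\infty$. Your symmetric cross-testing (a Brezis--Oswald-type identity) replaces all of this by the single one-signed integral $\int K r^\b v_\infty V_\infty\bigl(v_\infty^{p-1}-V_\infty^{p-1}\bigr)\,dx=0$, which is cleaner and avoids the appeal to ground-state positivity of the linearized form; but it buys this at the price of needing \emph{both} profiles in the energy space $X=H^1_K\cap L^{p+1}(r^\b K\,dx)$, a condition the paper never needs and never establishes for $v_\infty$ and $V_\infty$ (only the variational minimizer is known to lie in $X$ a priori). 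You correctly identify that this is the crux: the paper's Proposition on exponential decay only yields $v\le Ce^{-r^2/8}$, which against the weight $K=r^{2\l}e^{r^2/4}$ gives $Kv^2\sim r^{2\l}$ and is exactly insufficient, and the parabolic energy estimates of Proposition~\ref{local-quanti} transferred to the profile only give $\n v\in L^2(r^{2\l}dx)$, again without the Gaussian factor. So the sharpened decay $v,|\n v|=O(r^me^{-r^2/4})$ is not a cosmetic refinement but the actual load-bearing step of your proof, and it requires a barrier construction (in the spirit of Escobedo--Kavian) that is genuinely beyond what the paper provides; the cutoff variant does not evade this, since the flux term $\int K(V_\infty\n v_\infty-v_\infty\n V_\infty)\cdot\n\chi_R$ also needs decay of the gradients strictly better than $e^{-r^2/8}$. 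Provided you carry out that decay estimate, the rest of your argument (positivity of the profiles by Harnack, cancellation of the symmetric terms, the sign of the remaining integrand under $v_\infty\le V_\infty$, which you correctly import from minimality/maximality) is sound and closes the proof.
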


\begin{proof}
It suffices to show that $V_\infty\le v_\infty$ . Let
$w:=V_\infty - v_\infty $. Note that $w$ is a sub-solution
to the equation
\[
-K^{-1}\n (K\n w)-\sigma w +r^\b v_\infty^{p-1}w=0.
\]
Since $-K^{-1}\n (K\n v_\infty)-\sigma v_\infty +r^\b v_\infty^p=0$, it follows
from \cite[Theorem 4.1]{Da} that
\[
\int|\n \theta|^2 K\,dx + \int \left(r^\b v_\infty^{p-1} - \sigma\right)\theta^2K\,dx\ge0 \mbox{ for all }\theta\in C^{0,1}_c(\mathbb{R}^N).
\]
Since $r^{2\sigma}V_\infty\to 0$ as $r\to\infty$ it follows that, for a sufficiently large $R$,  one has $w(x)\le R^{-2\sigma}$ for all $x$, $|x|>R$.
By the weak maximum principle, we infer that $w(x)\le R^{-2\sigma}$ for all $x$, $|x|<R$. Hence $w\le0$.
\end{proof}

As the positive solution to \eqref{self-sim} produces a VSS, it is
clear that \eqref{self-sim} has a unique positive solution.  To find
it
one can use a
variational approach almost identical to that in \cite{ek87}.
Namely, one considers
the nonlinear
functional $J$ on the Banach space $X:=H^1_K(\mathbb{R}^N)\cap
L^{p+1}(\mathbb{R}^N, r^{\beta}Kdx)$,
\begin{equation}
\label{functional}
J(\theta):=\frac12\int_{\mathbb{R}^N} |\nabla \theta|^2 K\,d\xi+\frac1{p+1}\int_{\mathbb{R}^N} |\theta|^{p+1}r^{\beta}K\,d\xi
-\frac\mu2\int_{\mathbb{R}^N} | \theta|^2K\,d\xi.
\end{equation}

To show that $J$ is bounded below we need the following auxiliary result.

\begin{lemma}\label{bbound} For any $\varepsilon>0$ there
exists $C_\varepsilon>0$ such that
\[\|\theta\|_{L^2_K}^2\leq \varepsilon\left(\|\nabla\theta\|_{L^2_K}^2 + \|\theta\|_{L^2_{r^\beta
K}}^{p+1}\right) + C_\varepsilon,~\theta\in X.
\]
\end{lemma}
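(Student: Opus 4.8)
The plan is to argue by contradiction and reduce the estimate to the compact embedding $H^1_K(\R^N)\hookrightarrow L^2_K(\R^N)$ furnished by the compactness result of the Appendix; the scaling structure of the inequality then does all the remaining work once that compactness is granted. Suppose the assertion fails for some $\varepsilon_0>0$. Then for each $n\in\N$ there is $\theta_n\in X$ with
\[
\|\theta_n\|_{L^2_K}^2 > \varepsilon_0\Big(\|\nabla\theta_n\|_{L^2_K}^2 + \|\theta_n\|_{L^2_{r^\beta K}}^{p+1}\Big) + n .
\]
Set $a_n:=\|\theta_n\|_{L^2_K}^2$. Then $a_n>n\to\infty$, while $\|\nabla\theta_n\|_{L^2_K}^2<a_n/\varepsilon_0$ and $\|\theta_n\|_{L^2_{r^\beta K}}^{p+1}<a_n/\varepsilon_0$.

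Normalising, I would put $\tilde\theta_n:=a_n^{-1/2}\theta_n$, so that $\|\tilde\theta_n\|_{L^2_K}=1$ and $\|\nabla\tilde\theta_n\|_{L^2_K}^2=a_n^{-1}\|\nabla\theta_n\|_{L^2_K}^2<\varepsilon_0^{-1}$; thus $(\tilde\theta_n)_n$ is bounded in $H^1_K$. The decisive point is that the remaining term carries the \emph{superquadratic} power $p+1>2$, so rescaling makes it vanish:
\[
\|\tilde\theta_n\|_{L^2_{r^\beta K}}^{p+1}=a_n^{-\frac{p+1}2}\|\theta_n\|_{L^2_{r^\beta K}}^{p+1}<\varepsilon_0^{-1}a_n^{\frac{1-p}2}\longrightarrow 0
\]
as $n\to\infty$, because $a_n\to\infty$ and $\tfrac{1-p}2<0$. (The identical computation applies if the last term is read instead as $\|\theta\|_{L^{p+1}_{r^\beta K}}^{p+1}$, the natural quantity finite on $X$; only the exponent $p+1>2$ is used.)

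Now I would invoke compactness. Since $(\tilde\theta_n)_n$ is bounded in $H^1_K$, the compact embedding $H^1_K\hookrightarrow L^2_K$ gives a subsequence converging strongly in $L^2_K$, and, after passing to a further subsequence, a.e., to a limit $\tilde\theta$ with $\|\tilde\theta\|_{L^2_K}=1$; in particular $\tilde\theta\neq0$. On the other hand $\|\tilde\theta_n\|_{L^2_{r^\beta K}}\to0$, so along the same a.e.\ convergent subsequence $|\tilde\theta_n|^2 r^\beta K\to0$ in $L^1$, whence $\tilde\theta=0$ a.e.\ (the weight $r^\beta K$ being strictly positive). This contradicts $\tilde\theta\neq0$, and the lemma follows.

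Thus the only substantial ingredient, and the step I expect to be the main obstacle, is the compact embedding $H^1_K\hookrightarrow L^2_K$. Its proof decomposes into tightness at infinity and local compactness. For tightness the natural tool is a virial identity: since $\nabla\cdot(xK)=(N+2\lambda+\tfrac{r^2}2)K$, testing against $\theta^2$ and applying Young's inequality yields $\int r^2\theta^2K\,dx\le C\|\theta\|_{H^1_K}^2$ for $\theta$ in a dense subclass, hence $\int_{\{|x|>R\}}\theta^2K\,dx\le CR^{-2}\|\theta\|_{H^1_K}^2$, which is uniformly small for $R$ large; this is precisely the confinement produced by the Gaussian factor $e^{r^2/4}$ in $K$. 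Local compactness on each ball $B_R$ is a weighted Rellich theorem, and the only delicate point there is the behaviour at the origin, where the factor $r^{2\lambda}$ degenerates (for $\lambda>0$) or blows up (for $\lambda<0$); this is handled by the weighted Sobolev/Hardy machinery of the Appendix, its admissibility being guaranteed by $2\lambda>-(N-2)$, i.e.\ $\lambda>-\tfrac{N-2}2$.
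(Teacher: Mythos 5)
Your proof is correct, but it takes a genuinely different route from the paper's. The paper argues directly: it splits $\mathbb{R}^N$ into a small ball $B_{r_\varepsilon}$, the exterior of a large ball $B_{R_\varepsilon}$, and the annulus in between; the first two pieces are absorbed into $\varepsilon\|\nabla\theta\|_{L^2_K}^2$ via the weighted Hardy-type inequality of Lemma~\ref{Hardy1} (whose right-hand side carries both the $r^{-2}$ and the $r^{2}$ weights, the latter coming from the Gaussian factor), and on the annulus, where $r^\beta$ is bounded above and below, Young's inequality gives $|\theta|^2\le \frac{\varepsilon}{2}|\theta|^{p+1}+C_{p,\varepsilon}$, yielding an explicit $C_\varepsilon$. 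You instead argue by contradiction, normalize, and invoke the compact embedding $H^1_K\hookrightarrow L^2_K$ (Corollary~\ref{compact}); the superquadratic exponent $p+1>2$ makes the absorption term vanish under rescaling, and strict positivity of the weight forces the limit to be both of unit norm and zero. Your argument is sound (the only ingredient not already in the Appendix is routine bookkeeping with subsequences), and it is shorter once compactness is granted, but it is non-quantitative, whereas the paper's proof is elementary and produces $C_\varepsilon$ explicitly — and note that Corollary~\ref{compact} is itself deduced from Lemma~\ref{Hardy1} by essentially the same ball/annulus/exterior decomposition, so the two proofs ultimately rest on the same estimate, with yours routed through one extra layer of abstraction. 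Your remark about the norm $\|\theta\|_{L^2_{r^\beta K}}^{p+1}$ versus $\|\theta\|_{L^{p+1}_{r^\beta K}}^{p+1}$ is well taken: the paper's own proof bounds the annulus term by $\int|\theta|^{p+1}K\,dx$, consistent with the $L^{p+1}$ reading, and (as you observe) either reading supports both arguments. The closing paragraph of your proposal sketching a proof of the compact embedding is redundant here, since that is exactly Corollary~\ref{compact} of the Appendix and can simply be cited.
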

 \begin{proof}
For $\varepsilon>0$, choose $R_\varepsilon>r_\varepsilon>0$
such that
\[
\frac{16}{R_\varepsilon^2} + \frac{4r_\varepsilon^2}{(N-2+2\lambda)^2}<\frac\varepsilon2.
\]
Then, by Lemma \ref{Hardy1}, we have
\[
\int\limits_{B_{r_\varepsilon}\cup \mathbb{R}^N\setminus B_{R_\varepsilon}}|\theta|^2K\,dx \leq \frac\varepsilon2 \|\nabla\theta\|_{L^2_K}^2.
\]
Now, by the Young inequality,
\[
\int\limits_{B_{R_\varepsilon}\setminus B_{r_\varepsilon}}|\theta|^2K\,dx \leq
\frac\varepsilon2 \int\limits_{B_{R_\varepsilon}\setminus B_{r_\varepsilon}}|\theta|^{p+1}K\,dx
+ C_{p,\varepsilon} \int\limits_{B_{R_\varepsilon}\setminus B_{r_\varepsilon}}K\,dx.
\]
 \end{proof}

Now we are ready to show the existence of a non-trivial
minimizer of the functional $J$. 
This follows immediately from the next proposition.

\begin{proposition}
\label{minimizer1} The functional $J$ defined in \eqref{functional}
is bounded below and
lower semi-continuous with respect to the weak topology. Moreover, $J(\theta)\to \infty$ as $\|\theta\|_X\to
\infty$. 
\end{proposition}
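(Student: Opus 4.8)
~\textbf{Proof plan.} The plan is to establish the three claimed
properties---boundedness below, weak lower semicontinuity, and coercivity---in that order, with Lemma~\ref{bbound} as the central tool for the first and third, and the convexity structure of $J$ for the second.

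First I would prove coercivity and boundedness below simultaneously, since they come from the same estimate. Abbreviate $A:=\|\n\theta\|_{L^2_K}^2$ and $B:=\|\theta\|_{L^2_{r^\b K}}^{p+1}$, and note that the first two terms of $J$ are $\frac12 A+\frac1{p+1}B$ up to the notational identification of $L^{p+1}$-norms. The only term with the wrong sign is $-\frac\mu2\|\theta\|_{L^2_K}^2$. Applying Lemma~\ref{bbound} with a small $\varepsilon$ (chosen so that $\frac\mu2\varepsilon<\frac14$, say) gives
\[
\frac\mu2\|\theta\|_{L^2_K}^2\le \frac\mu2\varepsilon\left(A+B\right)+\frac\mu2 C_\varepsilon
\le \tfrac14 A+\tfrac14 B+ \tfrac\mu2 C_\varepsilon.
\]
Subtracting, one obtains $J(\theta)\ge \frac14 A+\left(\frac1{p+1}-\frac14\right)B-\frac\mu2 C_\varepsilon$; after possibly shrinking $\varepsilon$ further so that the $B$-coefficient is positive, this shows $J$ is bounded below by $-\frac\mu2 C_\varepsilon$ and that $J(\theta)\to\infty$ whenever $A+B\to\infty$, which is exactly $\|\theta\|_X\to\infty$.

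For weak lower semicontinuity I would split $J$ into its three constituent integrals. The gradient term $\theta\mapsto \frac12\int|\n\theta|^2K\,d\xi$ is a norm-square on $H^1_K$, hence convex and strongly continuous, and therefore weakly lower semicontinuous; the same reasoning applies to the $L^{p+1}$-term $\frac1{p+1}\int|\theta|^{p+1}r^\b K\,d\xi$, which is convex and continuous on the reflexive space $L^{p+1}(r^\b K\,dx)$. The delicate term is the negative one, $-\frac\mu2\int|\theta|^2 K\,d\xi$, for which I would need the embedding of $X$ into $L^2_K$ to be \emph{compact}, so that weak convergence $\theta_n\rightharpoonup\theta$ in $X$ upgrades to strong convergence in $L^2_K$ along a subsequence and the term passes to the limit by continuity. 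This compactness is precisely the auxiliary compactness result promised in the Appendix, and the weight $K=r^{2\l}e^{r^2/4}$ with its Gaussian growth is what makes it available; the same mechanism underlies Lemma~\ref{bbound}.

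\textbf{The main obstacle.} The hard part is the compact embedding $X\hookrightarrow\hookrightarrow L^2_K$ needed for the negative quadratic term. Away from the origin and away from infinity, local Rellich compactness is standard, so the real content is controlling the two tails: near $r=0$ the singular factor $r^{2\l}$ must be handled (via the Hardy-type inequality of Lemma~\ref{Hardy1}), and near $r=\infty$ the confining weight $e^{r^2/4}$ must be used to prevent mass from escaping. Lemma~\ref{bbound} already packages exactly these two tail estimates in quantitative form, so I expect weak lower semicontinuity of the full functional to follow once this compactness is invoked, with the remaining verifications being routine.
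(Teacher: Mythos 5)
Your proposal is correct and follows essentially the same route as the paper: boundedness below and coercivity are obtained by absorbing the negative quadratic term via Lemma~\ref{bbound}, and weak lower semicontinuity is obtained from the weak lower semicontinuity of the convex positive terms together with the compact embedding $H^1_K\hookrightarrow L^2_K$ (Corollary~\ref{compact}) to pass to the limit in the negative term. Your additional care in shrinking $\varepsilon$ so that the coefficient of the $L^{p+1}$-term stays positive is a detail the paper leaves implicit, but the argument is the same.
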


\begin{proof} Let $\theta_n\to\theta$ weakly in $X$. Then
$\theta_n\to\theta$ strongly in $L^2_K$ since
$H^1_K\stackrel{compact}{\hookrightarrow}L^2_K$, by
Corollary~\ref{compact}. So $\liminf\|\theta_n\|_{H^1_K}\geq
\|\theta\|_{H^1_K}$, and due to the lower semi-continuity of the
$L^p$-norm w.r.t. the weak convergence (see, e.g. \cite{LL})
$\liminf\|\theta_n\|_{L^{p+1}_{r^\beta K}}\geq
\|\theta\|_{L^{p+1}_{r^\beta K}}$ and $\lim\|\phi_n\|_{L^2_K}=
\|\phi\|_{L^2_K}.$ The last two assertions follow directly from
Lemma~\ref{bbound}.
\end{proof}

Next we show that the minimizer is nontrivial and can be chosen non-negative.

\begin{proposition}
\label{minimizer2} Let $\mu> \frac{N+2\l}{2}$. Then there
exists a non-trivial minimizer of $J$ which can be chosen
nonnegative.
\end{proposition}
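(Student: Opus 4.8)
The plan is to combine the direct method of the calculus of variations with an explicit spectral computation showing that $J$ dips below $0$ precisely when $\mu$ exceeds the bottom of the spectrum of the weighted Laplacian $-K^{-1}\n(K\n\cdot)$ in $L^2_K$, which I will identify as $\frac{N+2\l}2$.

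First I would establish the existence of a minimizer. By Proposition~\ref{minimizer1}, $J$ is bounded below, coercive, and weakly lower semicontinuous on $X$. Taking a minimizing sequence $(\theta_n)\subset X$, coercivity forces it to be bounded in $X=H^1_K\cap L^{p+1}(\R^N,r^\b K\,dx)$; being bounded in each of the two reflexive factors, a subsequence converges weakly in $X$ to some $\theta^*$, and weak lower semicontinuity gives $J(\theta^*)\le\liminf_n J(\theta_n)=\inf_X J$, so $\theta^*$ minimizes $J$. As in the proof of Proposition~\ref{minimizer1}, it is the compact embedding $H^1_K\hookrightarrow L^2_K$ of Corollary~\ref{compact} that lets the (otherwise only upper semicontinuous) negative term $-\frac\mu2\|\cdot\|_{L^2_K}^2$ pass to the limit.

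The crux is to show that the minimizer is nontrivial, i.e.\ that $\inf_X J<0=J(0)$, and this is where the hypothesis $\mu>\frac{N+2\l}2$ enters. I would produce a test function of negative energy by using the ground state of $-K^{-1}\n(K\n\cdot)$. A direct computation with $K=r^{2\l}e^{r^2/4}$ shows that $\theta_0:=e^{-r^2/4}$ satisfies
\[
-K^{-1}\n\cdot(K\n\theta_0)=\tfrac{N+2\l}2\,\theta_0,
\]
so $\theta_0$ is a positive eigenfunction and $\frac{N+2\l}2=\inf_{\theta\in X}\frac{\|\n\theta\|_{L^2_K}^2}{\|\theta\|_{L^2_K}^2}$ is the bottom of the spectrum; moreover $\theta_0\in X$ by its Gaussian decay. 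Evaluating $J$ on the ray $t\theta_0$,
\[
J(t\theta_0)=\tfrac{t^2}2\Big(\tfrac{N+2\l}2-\mu\Big)\|\theta_0\|_{L^2_K}^2+\tfrac{t^{p+1}}{p+1}\|\theta_0\|_{L^{p+1}_{r^\b K}}^{p+1},
\]
and since $\mu>\frac{N+2\l}2$ the coefficient of $t^2$ is strictly negative while $p+1>2$, so $J(t\theta_0)<0$ for all sufficiently small $t>0$. Hence $\inf_X J<0$ and the minimizer $\theta^*$ cannot vanish identically.

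Finally, to obtain a nonnegative minimizer I would pass to $|\theta^*|$: since $|\n|\theta^*||=|\n\theta^*|$ a.e.\ for $H^1$ functions and the remaining two terms of $J$ depend on $\theta^*$ only through $|\theta^*|$, one has $J(|\theta^*|)=J(\theta^*)=\inf_X J$, so $|\theta^*|\ge0$ is again a (still nontrivial) minimizer. The hard part is the nontriviality step: everything hinges on the explicit identification of $\theta_0=e^{-r^2/4}$ as the ground state with eigenvalue $\frac{N+2\l}2$, which is exactly what ties the threshold $\mu>\frac{N+2\l}2$ to the existence of a negative-energy test function (and, under the intended choice $\mu=\sigma=\frac{\b+2}{2(p-1)}$, to the subcriticality $p<p^*$).
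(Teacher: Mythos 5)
Your proposal is correct and follows essentially the same route as the paper: existence via Proposition~\ref{minimizer1}, nontriviality by evaluating $J$ on small multiples of the Gaussian $e^{-r^2/4}$ (whose quadratic term has coefficient $\frac{N+2\lambda}2-\mu<0$), and nonnegativity from $J(\theta)=J(|\theta|)$. The only difference is cosmetic — you make explicit the direct-method details and the interpretation of $e^{-r^2/4}$ as the ground state of $-K^{-1}\nabla\cdot(K\nabla\cdot)$, which the paper leaves implicit in its computation.
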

\proof Note that $J(0)=0$. Let $\tau>0$. Set  $\phi=\tau
e^{-\frac{r^2}{4}}$. Then
\[
E(\tau \phi)=(\frac{N+2\l}{2}-\mu)\tau^2\int \phi^2 K\,dx +\tau^{p+1} \int \phi^{p+1} r^{\beta}K\,dx.
\]
Now it clear that there exists $\tau>0$ such that $J(\tau \phi)<0$,
hence zero is not a minimizer. The last assertion follows from the
fact that $J(\theta)=J(|\theta|)$. \qed

The minimizer is exponentially decaying at infinity, which is shown in the next proposition.

\begin{proposition}
\label{exp-decay} Let $v\in H^1_K(\mathbb{R}^N)\cap
L^{p+1}(\R^N, r^{\beta}Kdx)$ be a solution to
\begin{equation}
\label{stat1}
-K^{-1}\nabla (K \nabla v)-\mu v +r^{\beta}v|v|^{p-1}=0.
\end{equation}
There exists $C>0$ such that
\[
|v|\le C e^{-\frac{r^2}{8}}, \ \text{on}\ \mathbb{R}^N\setminus B_1.
\]
\end{proposition}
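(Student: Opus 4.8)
The plan is to exhibit an explicit radial supersolution and compare $|v|$ to it. Writing the operator in nondivergence form, $-K^{-1}\nabla(K\nabla v)=-\Delta v-\nabla\log K\cdot\nabla v$ with $\nabla\log K=\left(\frac{2\lambda}{r}+\frac r2\right)\frac xr$, a direct computation for $w(x):=e^{-r^2/8}$ gives
\[
-K^{-1}\nabla(K\nabla w)-\mu w=\left(\frac{r^2}{16}+\frac{N+2\lambda}{4}-\mu\right)w.
\]
Hence, fixing $R_0\ge 1$ so large that $\frac{R_0^2}{16}\ge\mu-\frac{N+2\lambda}{4}$, the function $w$ is a positive classical supersolution of the linear operator $L:=-K^{-1}\nabla(K\nabla\,\cdot\,)-\mu$ on the exterior region $\{r>R_0\}$.

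On the other hand, since $v$ solves $Lv=-r^\beta v|v|^{p-1}$ and $r^\beta>0$, the Kato inequality for the weighted operator yields $L|v|\le\sgn(v)\,Lv=-r^\beta|v|^p\le0$, so $|v|$ is a weak subsolution of $L$ on $\mathbb{R}^N\setminus\{0\}$. Away from the origin the equation is uniformly elliptic with smooth coefficients, so by standard elliptic regularity $v\in C^\infty(\mathbb{R}^N\setminus\{0\})$; in particular $M_0:=\sup_{\partial B_{R_0}}|v|<\infty$.

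Next I would establish the decay $\sup_{|x|=R}|v|\to0$ as $R\to\infty$, the only genuinely quantitative input. Rescaling the equation on balls $B_{1/R}(x_0)$ with $|x_0|=R$ (on which the drift $\sim x/2$ and the Gaussian factor of $K$ vary by a bounded factor) turns it into a uniformly elliptic problem with bounded coefficients, so the De Giorgi--Nash--Moser local sup bound gives
\[
\sup_{B_{1/(2R)}(x_0)}|v|^2\le CR^{N}\!\!\int_{B_{1/R}(x_0)}\!|v|^2\,dx\le \frac{CR^{N}}{R^{2\lambda}e^{R^2/4}}\int_{\{|x|>R-1\}}\!|v|^2K\,dx .
\]
Since $v\in H^1_K$ the last integral tends to $0$, and the Gaussian prefactor $e^{-R^2/4}$ forces $\sup_{|x|=R}|v|\to0$.

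Finally I would compare. Setting $C:=M_0e^{R_0^2/8}$ makes $Cw\ge|v|$ on $\partial B_{R_0}$, and $z:=|v|-Cw$ is a subsolution of $L$ on $\{r>R_0\}$. On each bounded annulus $A_n:=\{R_0<r<R_n\}$ the weak maximum principle gives $\max_{\overline{A_n}}z\le\max\bigl(0,\sup_{\partial B_{R_n}}|v|\bigr)$, because $z\le0$ on $\partial B_{R_0}$ and $z\le|v|$ on $\partial B_{R_n}$. Letting $R_n\to\infty$ and using the decay from the previous step yields $z\le0$, i.e. $|v|\le Ce^{-r^2/8}$ on $\{r>R_0\}$; on the compact shell $\{1\le r\le R_0\}$ the bound follows after enlarging $C$, since $|v|$ is bounded there while $e^{-r^2/8}$ is bounded below. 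The step I expect to be the main obstacle is the decay estimate $\sup_{|x|=R}|v|\to0$: it is what powers the exhaustion argument and must be extracted quantitatively from $v\in H^1_K$ through a scale-adapted local boundedness estimate, since the linearly growing drift in $L$ precludes a naive constant-coefficient comparison on unit balls.
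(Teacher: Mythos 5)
Your barrier computation is correct ($e^{-r^2/8}$ is indeed a positive supersolution of $L:=-K^{-1}\nabla(K\nabla\cdot)-\mu$ where $\frac{r^2}{16}+\frac{N+2\lambda}{4}\ge\mu$, matching the linear part of the potential $V$ that appears in the paper's proof), and the Kato step and the rescaled De Giorgi--Nash--Moser decay estimate are fine. The genuine gap is the maximum-principle step on the annuli. The operator $L$ has zeroth-order coefficient $-\mu<0$, and for such operators the weak maximum principle in the form $\max_{\overline{A_n}}z\le\max\bigl(0,\sup_{\partial A_n}z^+\bigr)$ is simply false: subtracting the constant $m_n:=\sup_{\partial B_{R_n}}|v|$ from the subsolution $z$ gives $L(z-m_n)=Lz+\mu m_n$, which need not be $\le0$. (A one-dimensional model: $u=\cos x+\sin x$ satisfies $-u''-u=0$ on $(0,\pi/2)$ with boundary values $1$ but interior maximum $\sqrt2$.) What the positive supersolution $e^{-r^2/8}$ does buy you is the \emph{generalized} maximum principle: $z/e^{-r^2/8}$ attains its positive maximum on the boundary. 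But then the quantity you must control at the outer boundary is $\sup_{\partial B_{R_n}}|v|\,e^{R_n^2/8}$, not $\sup_{\partial B_{R_n}}|v|$, and your decay estimate gives only $\sup_{\partial B_R}|v|\lesssim R^{(N-2\lambda)/2}e^{-R^2/8}\,\varepsilon(R)$ with $\varepsilon(R)\to0$ at no prescribed rate, so the normalized boundary term $R^{(N-2\lambda)/2}\varepsilon(R)$ need not vanish. The argument can be repaired: introduce a second, slower barrier $\delta e^{-\epsilon r^2}$ with $0<\epsilon<\frac18$ (also a supersolution of $L$ for large $r$, since $\epsilon(1-4\epsilon)>0$), compare $|v|$ with $Ce^{-r^2/8}+\delta e^{-\epsilon r^2}$ on $\{R_0<r<R_n\}$ using the refined maximum principle, let $n\to\infty$ and then $\delta\to0$; your DGNM bound does give $\sup_{\partial B_R}|v|=o(e^{-\epsilon R^2})$, which is exactly what this needs.

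The paper sidesteps all of this by performing your barrier comparison as a ground-state transform at the outset: it sets $w:=v\,e^{r^2/8}$, observes that $w$ solves $-h^{-2}\nabla(h^2\nabla w)+Vw=0$ with $V=\frac{N+2\lambda}{4}-\mu+\frac{r^2}{16}+e^{-(p-1)r^2/8}r^\beta|w|^{p-1}\ge0$ for $|x|>R$, and then tests the weak formulation on $\{|x|>1\}$ with $(w-M)^+$, $M=\sup_{1<|x|<R}w$. Since $V\ge0$ on the support of the test function, both resulting terms are nonnegative and hence vanish, giving $w\le M$ directly; the admissibility of the test function comes from $v\in H^1_K$ together with the Hardy inequality of Lemma~\ref{Hardy1} (which controls $\int r^2v^2K\,dx$ and hence puts $w$ in $H^1_{h^2}$). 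This integrated argument requires no pointwise decay of $v$ at infinity and no exhaustion by annuli, so it avoids both the refined maximum principle and the rescaled local boundedness estimate that carry the weight in your version.
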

\proof We follow \cite{ek87} simplifying the arguments. Let
$w:=v e^{|x|^2/8}$. Then $w$ satisfies the equation
\begin{equation}
\label{eqw}
-\frac1{h^2}\n(h^2 \n w)+Vw=0
\end{equation}
with
$V:=\frac{N+2\l}{4}-\mu+\frac{r^2}{16}+e^{-\frac{(p-1)r^2}{8}}r^{\beta}|w|^{p-1}$.
One can choose $R>0$ such that $V(x)\ge 0$ on $|x|>R$. It is
easily seen that solutions to \eqref{eqw} are locally bounded
outside the point $x=0$.  Let $M:=\sup\limits_{1<|x|<R}w(x)$.
Looking at \eqref{eqw} on $|x|>1$ and taking $\var:=(w-M)^+$ as
a test function we obtain that $\var=0$. Changing $w$ by $-w$
in \eqref{eqw} we see that $|w(x)|\le M$, which proves the
assertion.\qed
%


\renewcommand\thesection{\Alph{section}}
\setcounter{section}{0}

\section{Appendix: Hardy-type inequality and compact embedding}

\begin{lemma}
\label{Hardy1} For $\lambda>\frac{2-N}2$, $\alpha\geq0$, 
$K=r^{2\lambda}e^{\alpha r^2}$, $\theta\in C_c^\infty(\R^N)$, there holds 
\begin{equation}\label{hardy}
\int_{\R^N} |\nabla \theta|^2 Kdx\geq
\int_{\R^N}\left(\alpha^2{r^2}+\alpha(N+2\lambda) + \left(\frac{N-2 +2\lambda}2\right)^2\frac1{r^2}\right)|\theta|^2 Kdx.
\end{equation}
\end{lemma}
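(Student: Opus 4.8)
The plan is to prove \eqref{hardy} by the ground-state substitution, that is, by completing the square against the logarithmic gradient of the formal ground state. The right comparison function is
\[
\psi:=r^{-\gamma}e^{-\frac{\alpha}{2}r^2},\qquad \gamma:=\frac{N-2+2\lambda}{2},
\]
for which $\gamma>0$ exactly because $\lambda>\frac{2-N}{2}$; set $F:=\nabla\log\psi=-\gamma\,r^{-2}x-\alpha x$. I expect $\psi$ to satisfy $-K^{-1}\nabla\cdot(K\nabla\psi)=W\psi$ with $W$ equal to the weight on the right of \eqref{hardy}, so that the inequality reduces to the nonnegativity of a square.

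First I would record the elementary completion-of-squares identity $|\nabla\theta|^2=|\nabla\theta-\theta F|^2+\nabla(\theta^2)\cdot F-\theta^2|F|^2$, multiply by $K$, and integrate over $\R^N$. Integrating the middle term by parts and using $\div(FK)=K(\div F+F\cdot\nabla\log K)$ gives
\[
\int_{\R^N}|\nabla\theta|^2K\,dx=\int_{\R^N}|\nabla\theta-\theta F|^2K\,dx+\int_{\R^N}W\,|\theta|^2K\,dx,
\]
where $W:=-\div F-F\cdot\nabla\log K-|F|^2$. There then remains the routine computation of $W$: with $\nabla\log K=2\lambda r^{-2}x+2\alpha x$, $\div(r^{-2}x)=(N-2)r^{-2}$, and $|F|^2=(\gamma r^{-2}+\alpha)^2r^2$, one collects the coefficients of $r^2$, of $r^{-2}$, and the constant. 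The choices $\gamma=\frac{N-2+2\lambda}{2}$ and exponent $-\frac{\alpha}{2}$ make the $r^2$ coefficient equal $\alpha^2$, the $r^{-2}$ coefficient equal $\left(\frac{N-2+2\lambda}{2}\right)^2$, and the constant equal $\alpha(N+2\lambda)$, matching \eqref{hardy} exactly. Since the first integral on the right is nonnegative, \eqref{hardy} follows.

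The one point requiring care, and the main obstacle, is the integration by parts, since $F$ is singular at the origin and $\theta$ need not vanish there. I would carry it out on $\R^N\setminus B_\varepsilon$ and control the boundary term on $\partial B_\varepsilon$: the radial component of $FK$ is of order $r^{2\lambda-1}$ near $0$, so the boundary integral is $O(\varepsilon^{N+2\lambda-2})$, which tends to $0$ as $\varepsilon\to0$ precisely because $N+2\lambda-2>0$ under the hypothesis $\lambda>\frac{2-N}{2}$. This same condition guarantees that the weight $r^{-2}K=r^{2\lambda-2}e^{\alpha r^2}$ is locally integrable, so every integral above is finite for $\theta\in C_c^\infty(\R^N)$, and the argument is complete.
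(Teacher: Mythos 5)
Your argument is correct: the identity $|\nabla\theta|^2=|\nabla\theta-\theta F|^2+\nabla(\theta^2)\cdot F-\theta^2|F|^2$ with $F=\nabla\log\bigl(r^{-\gamma}e^{-\alpha r^2/2}\bigr)$, the computation of $W=-\div F-F\cdot\nabla\log K-|F|^2$, and the treatment of the boundary term on $\partial B_\varepsilon$ all check out, and the choice $\gamma=\frac{N-2+2\lambda}{2}$ does produce exactly the weight in \eqref{hardy}. The route differs from the paper's in one substantive way. The paper substitutes $v=r^{\lambda}e^{\frac{\alpha}{2}r^2}\theta=K^{1/2}\theta$, which only conjugates away the weight $K$; after integrating the cross term by parts it is left with $\int|\nabla v|^2dx$ plus a potential whose $r^{-2}$ coefficient is $\lambda^2+(N-2)\lambda$, and it must then invoke the classical Hardy inequality $\int|\nabla v|^2dx\ge\left(\frac{N-2}{2}\right)^2\int\frac{v^2}{r^2}dx$ to supply the missing $\left(\frac{N-2}{2}\right)^2$. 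You instead complete the square against the full formal ground state, i.e.\ effectively substitute $\theta=r^{-\gamma}e^{-\alpha r^2/2}w$ with the Hardy-optimal exponent $\gamma$ rather than $-\lambda$, so the discarded nonnegative term is a manifest square $\int|\nabla\theta-\theta F|^2K\,dx$ and no external inequality is needed. Your proof is therefore self-contained where the paper's is not, and it makes explicit the vanishing of the boundary term at the origin (of order $\varepsilon^{N+2\lambda-2}$) and the local integrability of $r^{-2}K$, both of which the paper leaves implicit in its own integration by parts; the paper's version, in exchange, is slightly shorter because it outsources the singular part of the estimate to the standard Hardy inequality.
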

\begin{proof} First, notice that ${\rm div}(x|x|^q)=(N+q)|x|^q$ for all $q>-N$. Now let
$v=r^\lambda e^{\frac\alpha2 r^2}\theta$. Then $v\in H^1(\R^N)$ and
\[
\nabla v=r^\lambda e^{\frac\alpha2|x|^2}\nabla \theta+ x\left(\alpha + \frac\lambda{r^2}\right) v.
\] Hence we have
\[
\begin{split}
\int_{\R^N} |\nabla \theta|^2 Kdx & =
\int_{\R^N} |\nabla v|^2dx +\int_{\R^N} \left(\alpha^2r^2 +  2\alpha\lambda + \frac{\lambda^2}{r^2} \right)v^2dx-
\int_{\R^N} (\nabla v^2)\cdot x\left(\alpha + \frac\lambda{r^2}\right)dx\\
& =
\int_{\R^N} |\nabla v|^2dx +\int_{\R^N} \left(\alpha^2r^2  +  \alpha( N + 2\lambda) +
\frac{\lambda^2 + (N-2)\lambda}{r^2} \right)v^2dx.
\end{split}
\]
Now the assertion follows from the standard Hardy inequality.
\end{proof}

\begin{corollary}\label{compact} Let $\lambda>
-\frac{N-2}2$, $\alpha>0$,  $K=e^{\alpha
r^2}r^{2\lambda}$. Then $H^1(\mathbb{R}^N,Kdx)$ is compactly
embedded into $L^2(\mathbb{R}^N,Kdx).$
\end{corollary}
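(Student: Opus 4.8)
The plan is to exploit the coercivity of the weight appearing on the right-hand side of the Hardy inequality \eqref{hardy}. Writing $W(r):=\alpha^2 r^2+\alpha(N+2\lambda)+\left(\frac{N-2+2\lambda}{2}\right)^2\frac1{r^2}$, Lemma~\ref{Hardy1} (extended from $C_c^\infty$ to all of $H^1_K$ by density) gives $\int_{\R^N} W|\theta|^2K\,dx\le \int_{\R^N}|\n\theta|^2K\,dx\le\|\theta\|_{H^1_K}^2$. The crucial observation is that $W(r)\to\infty$ both as $r\to\infty$ (because $\alpha>0$) and as $r\to0$ (because $\lambda>-\frac{N-2}2$ forces $N-2+2\lambda>0$, so the coefficient of $r^{-2}$ is strictly positive). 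This double coercivity lets me control the mass of a bounded sequence near the origin and near infinity uniformly, while on any compact annulus the problem reduces to the classical Rellich--Kondrachov theorem.

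Concretely, let $(\theta_n)$ be bounded in $H^1_K$, say $\|\theta_n\|_{H^1_K}\le M$. For the tails I would estimate, for $R>1$,
\[
\int_{|x|>R}|\theta_n|^2K\,dx\le\frac1{\alpha^2R^2}\int_{|x|>R}W|\theta_n|^2K\,dx\le\frac{M^2}{\alpha^2R^2},
\]
using $W(r)\ge\alpha^2r^2\ge\alpha^2R^2$ on $\{|x|>R\}$, and for $0<\rho<1$,
\[
\int_{|x|<\rho}|\theta_n|^2K\,dx\le\frac{\rho^2}{c_0}\int_{|x|<\rho}W|\theta_n|^2K\,dx\le\frac{\rho^2M^2}{c_0},
\]
using $W(r)\ge c_0 r^{-2}\ge c_0\rho^{-2}$ on $\{|x|<\rho\}$, where $c_0:=\left(\frac{N-2+2\lambda}{2}\right)^2>0$. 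Both bounds tend to zero as $R\to\infty$ and $\rho\to0$, uniformly in $n$.

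On an annulus $A_{\rho,R}:=\{\rho\le|x|\le R\}$ the weight $K=e^{\alpha r^2}r^{2\lambda}$ is continuous and bounded between two positive constants, so the $H^1_K$- and the ordinary $H^1$-norms are equivalent there; hence $(\theta_n)$ is bounded in $H^1(A_{\rho,R})$ and, by Rellich--Kondrachov, relatively compact in $L^2(A_{\rho,R})=L^2_K(A_{\rho,R})$. After extracting a subsequence that converges weakly in $H^1_K$ to some $\theta$ and strongly in $L^2$ on a fixed annulus, I would split $\int_{\R^N}|\theta_n-\theta|^2K\,dx$ into the contributions from $\{|x|<\rho\}$, $A_{\rho,R}$ and $\{|x|>R\}$: given $\varepsilon>0$, the tail estimates (applied to $\theta_n-\theta$, whose $H^1_K$-norm is uniformly bounded) make the outer two pieces smaller than $\varepsilon$ once $\rho$ is small and $R$ large, and the annular piece is smaller than $\varepsilon$ for large $n$ by the local strong convergence. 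A standard diagonal extraction over $\rho\to0$, $R\to\infty$ then yields a subsequence converging strongly in $L^2_K$, proving compactness.

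The two points requiring care are the justification that the Hardy inequality \eqref{hardy} persists for general $\theta\in H^1_K$ — this follows once one checks that $C_c^\infty(\R^N)$ (or $C_c^\infty(\R^N\setminus\{0\})$) is dense in $H^1_K$ and passes to the limit — and the uniformity of the tail estimates across the sequence, which is exactly what the coercivity of $W$ delivers. I expect the main obstacle to be the bookkeeping in the three-region splitting together with ensuring the weak limit $\theta$ itself obeys the same tail bounds, which follows from weak lower semicontinuity of $\theta\mapsto\int_{\R^N} W|\theta|^2K\,dx$.
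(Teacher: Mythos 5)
Your proof is correct and follows essentially the same route as the paper: the Hardy inequality of Lemma~\ref{Hardy1} gives uniform smallness of the $L^2_K$-mass near the origin (via the $r^{-2}$ term, using $N-2+2\lambda>0$) and near infinity (via the $\alpha^2r^2$ term), while on the intermediate annulus the weight is bounded above and below so Rellich--Kondrachov applies. The only cosmetic differences are that the paper works directly with a weakly null sequence, which removes the need for your final diagonal extraction, and that it does not spell out the density argument extending \eqref{hardy} from $C_c^\infty$ to $H^1_K$.
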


\begin{proof}
It suffices to prove that, given $v_n\to 0$ weakly in
$H^1(\mathbb{R}^N,Kdx)$, the sequence $(v_n)_n$ converges to 0
strongly in $L^2(\mathbb{R}^N, Kdx)$.

Let $m:=\sup\limits_n\|v_n\|_{H^1}$. We use the following
decomposition: for $0<r_0<R_0$,
\begin{equation}\label{dec}
    \int_{\mathbb{R}^N}v^2Kdx = \int_{r<r_0}v^2 Kdx + \int_{r_0<r<R_0}v^2Kdx
+ \int_{r>R_0}v^2Kdx.
\end{equation}
Fix $\varepsilon>0$ and choose $r_0$ and $R_0$ such that
\[
2mr_0^{2}\left(\frac{N-2 + 2\lambda}2\right)^{-2}<\varepsilon \mbox{ and }
\frac{2m}{\alpha^2 R_0^{2}}<\varepsilon.
\]
Then by Lemma~\ref{Hardy1}
\[
\begin{split}
\int_{r<r_0}v^2  Kdx + \int_{r>R_0}v^2 Kdx
\leq r_0^2\int_{\mathbb{R}^N}\frac{v^2}{r^2} Kdx + \frac1{R_0^2}\int_{\mathbb{R}^N}r^2v^2Kdx\\
\leq\left(r_0^2\left(\frac{N-2 + 2\lambda}2\right)^{-2} + \frac{16}{R_0^2}\right)m<\varepsilon.
\end{split}
\]
Finally, the sequence $(v_n)$ is bounded in $H^1(B_{R_0}\setminus
B_{r_0},Kdx)=H^1(B_{R_0}\setminus B_{r_0},dx)$. So $v_n\to0$
weakly in $H^1(B_{R_0}\setminus B_{r_0},dx)$,  hence $v_n\to0$
strongly in $L^2(B_{R_0}\setminus
B_{r_0},dx)=L^2(B_{R_0}\setminus B_{r_0},Kdx)$. Thus, for all
$\varepsilon>0$
\[
\limsup\limits_{n\to\infty}\int_{\mathbb{R}^N}v^2_nKdx<\varepsilon.
\]
The same argument implies the second assertion.
\end{proof}

\noindent {\bf Acknowledgement.} The authors would like to
acknowledge support of the Royal Society through the
International Joint Project Grant 2006/R1.

\begin{small}

\end{small}

\end{document}